\newcommand{\rset}{\mathbb{R}}
\providecommand{\norm}[1]{\lVert#1\rVert}
\theoremstyle{plain}
\newtheorem{theorem}{Theorem}[section]
\newtheorem{corollary}[theorem]{Corollary}
\newtheorem{lemma}[theorem]{Lemma}
\newtheorem{assumption}[theorem]{Assumption}
\theoremstyle{definition}
\newtheorem{definition}{Definition}
\theoremstyle{remark}
\newtheorem{remark}{Remark}
\theoremstyle{example}
\newtheorem{example}{Example}
\begin{document}

\articletype{RESEARCH ARTICLE}

\title{Complexity  of first order inexact Lagrangian
and penalty methods for conic convex programming}

\author{
\name{I. Necoara\textsuperscript{a}$^{\ast}$
\thanks{$^\ast$Corresponding author, email:
ion.necoara@acse.pub.ro. The research leading to these results has
received funding from UEFISCDI Romania, PNII-RU- TE, project
MoCOBiDS, no. 176/01.10.2015. It also presents research results of
the Belgian Network DYSCO funded by the Interuniversity Attraction
Poles Programme initiated by the Belgian State, and of the Concerted
Research Action programme supported by the Federation
Wallonia-Brussels, no. ARC 14/19-060. Support from two WBI-Romanian
Academy grants is also acknowledged. The authors thank Prof. Yu.
Nesterov for inspiring discussions.} and A.
Patrascu\textsuperscript{a} and F. Glineur\textsuperscript{b}}
\affil{\textsuperscript{a}Automatic Control and  Systems Engineering
Department, University Politehnica Bucharest, 060042 Bucharest,
Romania. \textsuperscript{b}Center for Operations Research and
Econometrics, Catholic University of Louvain, B-1348
Louvain-la-Neuve, Belgium
}
}

\maketitle

\begin{abstract}
In this paper we present a complete iteration complexity  analysis
of inexact first order   Lagrangian and penalty methods for solving
cone constrained convex problems that have or may not have optimal
Lagrange multipliers that close the duality gap. We first assume the
existence of optimal Lagrange multipliers and study primal-dual
first order methods based on inexact information and augmented
Lagrangian smoothing or Nesterov type smoothing. For inexact (fast)
gradient augmented Lagrangian methods we derive a total
computational complexity of $\mathcal{O}\left(
\frac{1}{\epsilon}\right)$ projections onto a simple primal  set in
order to attain an $\epsilon-$optimal solution of the conic convex
problem. For the inexact fast gradient method combined with Nesterov
type smoothing we derive computational complexity $\mathcal{O}\left(
\frac{1}{\epsilon^{3/2}}\right)$ projections onto the same set.
Then, we assume that optimal Lagrange multipliers for the cone
constrained convex problem might not exist, and analyze the fast
gradient method for solving penalty reformulations of the problem.
For the fast gradient method combined with penalty framework we also
derive a total  computational complexity of $\mathcal{O}\left(
\frac{1}{\epsilon^{3/2}}\right)$ projections onto a simple primal
set to attain an $\epsilon-$optimal solution for the original
problem.
\end{abstract}

\begin{keywords}
conic convex problems, smooth (augmented) dual functions, penalty functions, (augmented) dual first order methods, penalty fast gradient methods, approximate primal solution,  computational complexity.
\end{keywords}

\begin{classcode}
90C25; 90C46; 68Q25; 65K05.
\end{classcode}


\section{Introduction}
\noindent Many recent engineering and economical applications  can
be posed as  large-scale conic convex problems and thus the interest
for scalable algorithms with inexpensive iterations is continuously
increasing. For instance, in the recent optimization literature,
first order methods gained much attention since they present cheap
iterations and are usually adequate  {for
large-scale} convex setting. In the constrained case, when there are
conic complicated constraints, many first order algorithms are
combined with duality  or penalty strategies. For example,  in
\cite{LanLu:11} various smooth and nonsmooth formulations are
provided for  cone programming, and through application of first
order methods (e.g. fast gradient or mirror descent) on the
corresponding reformulations of the optimality conditions as
optimization problems, an $\epsilon$-optimal solution is obtained in
$\mathcal{O}\left(\frac{1}{\epsilon}\right)$ projections onto a
simple primal set. For conic constrained convex  problems, quadratic
penalty strategies are combined with fast gradient method in
\cite{LanMon:13}.  Under the assumptions  of smooth objective
function and  existence of a finite optimal Lagrange multiplier, the
first order  quadratic penalty method in \cite{LanMon:13} requires
$\mathcal{O}(\frac{1}{\epsilon^2})$ fast gradient iterations.
Moreover, using a regularization of the original problem with a
strongly convex term, this method requires
$\mathcal{O}(\frac{1}{\epsilon}\log \left(
\frac{1}{\epsilon}\right))$ fast gradient iterations. Recently,
other first order augmented Lagrangian methods are presented in
\cite{AybIye:13,LanMon:15,NedNec:14} and computational complexity
estimates of order $\mathcal{O}\left(\frac{1}{\epsilon}\right)$ are
obtained for smooth problems with bounded optimal Lagrange
multipliers. First order methods are also combined with duality and
Nesterov type smoothing in
\cite{BecTeb:12,BotHen:15,BotHen:15o,DevGli:12,NecSuy:08,QuoNec:15,QuoCev:14,YurQuo:15}
and convergence rates of order
$\mathcal{O}\left(\frac{1}{\epsilon}\right)$ in terms of dual
gradient evaluations are derived. Another interesting approach
relies on reformulation of conic constrained programming problems
into a monotone variational inequality and  {then}
designing various algorithms for solving these inequalities. This
approach can be found in \cite{Nes:07,Nem:04}, where different
primal-dual methods are devised for solving the variational
inequality under the boundedness assumption of the primal and dual
feasible sets. Recently,  the boundedness condition has been
eliminated in \cite{MonSva:10}.

\vspace{5pt}

\noindent \textbf{Motivation}. However, the following issues  can be
identified in the existing literature:

\noindent (a) Most of the existing papers on dual first order
methods  combined with smoothing techniques derive rate of
convergence results in terms of outer iterations (number of dual
gradient evaluations). However, we will show (see e.g., Theorem 3.4)
that one might choose an appropriate value of the smoothing
parameter such that after a single outer iteration an
$\epsilon-$solution can be obtained. Thus, convergence rates in
terms of outer iterations are not relevant in this case and it is
natural to analyze the overall complexity  of these methods that
also take into account the inner iterations (e.g., number of
projections onto the primal feasible set or  {number
of matrix-vector multiplications}).

\noindent (b) Moreover, from our knowledge,  there is no complete
analysis in the optimization literature regarding the overall
complexity of inexact dual first order methods based on augmented
Lagrangian smoothing and Nesterov smoothing and clarifying which
smoothing approach has a better behavior.

\noindent (c) Finally,   all the papers on Lagrangian and penalty
methods mentioned above make the strong assumption that there exists
an optimal Lagrange multiplier for the primal convex problem that
closes the duality gap. This property is usually guaranteed through
a Slater type condition, which in the large-scale setting is very
difficult to check computationally or even might not hold. Recently,
Nesterov developed in \cite{Nes:14} subgradient  methods for
nonsmooth convex problems with functional constraints without this
assumption on the existence of an optimal Lagrange multiplier  and
proved that  an $\epsilon$-optimal point can be attained after
$\mathcal{O}\left(\frac{1}{\epsilon^2}\right)$ subgradient
evaluations  for either the objective function or for a functional
constraint. Nesterov also asks in  \cite{Nes:14}  whether it is
possible to improve this convergence rate result under additional
smooth assumptions on the objective function and functional
constraints.

\vspace{5pt}

\noindent \textbf{Contributions}. These issues motivate our work
here.  In this paper we present a complete iteration complexity
analysis of inexact first order  Lagrangian and penalty methods for
solving cone constrained convex problems that have or may not have
optimal Lagrange multipliers that close the duality gap. In the
first part of our paper we assume the existence of optimal Lagrange
multipliers and we derive overall complexity  of primal-dual first
order methods based on the inexact oracle framework \cite{DevGli:14}
and augmented Lagrangian smoothing \cite{RocWet:98} or Nesterov type
smoothing \cite{BecTeb:12,Nes:051}. Although we obtain in some cases
similar complexity results with those found in the literature, our
analysis based on the inexact oracle framework is simpler, intuitive
and more elegant, opening various possibilities for extensions to
more complex optimization models. Moreover, in some  optimality
criteria our computational complexities are significantly better
than those found in the existing literature. These better
complexities are achieved through the new  first order inexact
oracles for augmented Lagrangian (Nesterov) smoothing derived in
Theorem 3.2 (Theorem 3.11) that improve substantially those in
\cite{DevGli:14}.
In the second part we assume that  the conic constrained convex
problem might not admit an optimal Lagrange multiplier. In this
case, we combine the fast gradient method with penalty strategies
and derive computational complexity certifications for such methods
which consistently improves those given in \cite{Nes:14} for the
nonsmooth case.  Thus, our results cover the particular case when
the Slater condition does not hold or it is difficult to check for
large-scale conic convex problems and answer positively to
Nesterov's question. To  the best of our knowledge, this paper
present one of the first computational complexity results for first
order penalty methods  for convex problems when optimal Lagrange
multipliers do not exist. More explicitly, our contributions are:

\vspace{5pt}

\noindent $(i)$  First, we assume that we have optimal Lagrange
multipliers that close the duality gap for the cone constrained
convex problem with simple or smooth objective function. We provide
new computational complexity results on the dual first order
augmented Lagrangian methods, where the main complexity bounds show
that, in order to obtain an $\epsilon-$optimal solution for the
original problem, the inexact (fast) gradient augmented Lagrangian
algorithms have to perform
$\mathcal{O}\left(\frac{1}{\epsilon}\right)$ total projections onto
the  simple primal  feasible set and  feasible cone.

\noindent $(ii)$ We combine in a novel fashion Nesterov smoothing technique
and inexact fast gradient method for solving  cone constrained
optimization problems with possibly unbounded feasible cone. We show
that, in order to obtain an $\epsilon$-optimal solution,  fast
gradient method with inexact information performs
$\mathcal{O}\left(\frac{1}{\epsilon^{3/2}} \log
\left(\frac{1}{\epsilon}\right)\right)$ projections onto the simple
primal feasible  set and $\mathcal{O}\left(\frac{1}{\epsilon}
\right)$ projections onto the feasible cone. Thus, our work shows
that inexact fast gradient method based on Nesterov smoothing has
worse overall complexity than the one based on augmented Lagrangian
smoothing.

\noindent $(iii)$ Then, we eliminate the assumption that there
exists some optimal Lagrange multiplier for the cone constrained
convex problem and we analyze the computational complexity of fast
gradient penalty methods. If the objective function is smooth, then
we prove that in order to obtain an $\epsilon$-optimal solution for
the original problem  we need to perform
$\mathcal{O}\left(\frac{1}{\epsilon^{3/2}}\right)$ total projections
onto the simple primal feasible set. Through an example, we also
show that our bounds are tight.

\vspace{5pt}

%

\noindent \textbf{Notations}. We denote $\bar{\rset} = \rset \cup \{
+ \infty \}$. For $u,v \in \rset^n$, we consider scalar product
$\langle u,v \rangle = u^T v$ and  Euclidean norm $\|u\|=\sqrt{u^T
u}$. Further, $[u]_U$ denotes the projection of $u$ onto
 {nonempty closed} convex set $U$ and
$\text{dist}_{U}(u) =\|u -[u]_U\|$ its distance to  $U$. Moreover,
we use notation $\mathcal{N}_{U}(u)$ for the normal cone of the
convex set $U$ at $u \in U$ defined by $\mathcal{N}_{U}(u) = \{t \in
\rset^n:\; \langle t, u - v \rangle \ge 0 \quad \forall v \in U \}$.
We also use notation $\mathcal{B}_r(x) = \{ z \in \rset^n| \;\;
\norm{z-x} \le r \}$. For a matrix $G \in \rset^{m \times n}$ we use
$\|G\|$ for the spectral norm.


\vspace{5pt}

\section{Problem formulation}
In this paper we consider the following  cone constrained  convex
optimization problem:
\begin{align}
\label{problem}
 f^* = \; & \min\limits_{u \in U} \quad f(u) \qquad  \text{s.t.}
 \quad Gu+g \in \mathcal{K},
\end{align}
where $f: \rset^n \to \bar{\rset}^{}$ is a proper, closed, convex
function, $U \subseteq \text{dom} f$ is a nonempty  closed, convex
set, $G \in \rset^{m \times n}$ and $\mathcal{K} \subseteq \rset^m$
is a nonempty, closed, convex cone, having its polar cone
$\mathcal{K}^* = \{ v \in \rset^m:\; \langle v, \kappa \rangle \le 0
\quad  \forall \kappa \in \mathcal{K} \}$. We denote $U^* \subseteq
\rset^n$  the optimal set of the above problem. Note that our
formulation and results can be extended to general normed vector
spaces. The following assumptions are valid throughout the paper:
\begin{assumption}
\label{strong_conv} Objective function $f$ is strongly convex with
constant $\sigma_f \ge 0$:

\vspace{-0.6cm}

\begin{equation*}
f \left(\alpha u + (1 - \alpha)v \right) \le \alpha f(u) +  (1 -
\alpha) f(v) - \frac{\sigma_f \alpha (1 - \alpha)}{2} \norm{u-v}^2
\;\; \forall u, v \!\in\! \text{dom} f, \; \alpha \in [0, \; 1].
\end{equation*}
\end{assumption}

\vspace{-0.4cm}

\noindent Note that Assumption \ref{strong_conv} with $\sigma_f=0$ is
equivalent with convexity of  function $f$.

\begin{assumption}\label{simple_set}
\noindent $(i)$ \; The feasible set $U$ and the cones  $\mathcal{K}$
and $\mathcal{K}^*$ are closed, convex and simple (e.g., the
projection onto these sets can be
obtained in closed form). \\
\noindent $(ii)$ The convex set $U$ is bounded, i.e. exists $D_U <
\infty$ such that $\max\limits_{u,v \in U} \norm{u-v} \le D_U$.
\end{assumption}

\noindent Note that these assumptions are standard in the context of
first order Lagrangian and penalty methods for conic convex
programming, see e.g.
\cite{AybIye:13,LanMon:13,LanMon:15,Nem:04,Nes:051,QuoCev:14}.
Further, convex function $h: \rset^n \to \bar{\rset}^{}$, with $U
\subseteq \text{dom} h$, is called \textit{simple} if the optimal
solution of the following problem can be efficiently obtained (e.g.,
in closed form):

\vspace{-0.5cm}

$$\min\limits_{u \in U} \; h(u) + \frac{1}{2\mu}\norm{u-z}^2
\qquad \forall \mu>0 \;\; \text{and} \;\; z \in \rset^n.$$

\vspace{-0.3cm}

\noindent In this paper we assume that the convex objective function
$f$ is either simple or has Lipschitz continuous gradient with
constant $L_f>0$ and $\text{dom} f = \rset^n$, i.e.:

\vspace{-0.5cm}

\[ 0 \leq f(y) - \left( f(x) + \langle \nabla f(x), y -x \rangle  \right)
 \leq \frac{L_f}{2} \| x - y\|^2 \quad \forall x,y \in \rset^n. \]

\vspace{-0.3cm}

\noindent Our goal is to  find an approximate  solution for the
optimization problem \eqref{problem}. Thus,   we introduce the
following  definition used in the rest of the paper:
\begin{definition}\label{opt_point}
Given the desired accuracy $\epsilon>0$, the primal  point
${u}_\epsilon \in U$ is an \textit{$\epsilon$-optimal solution} for
the cone constrained convex problem \eqref{problem} if it satisfies:
$$|f({u}_\epsilon ) - f^*| \le \epsilon \quad \text{and} \quad \text{dist}_{\mathcal{K}}(G{u}_\epsilon +g) \le \epsilon.$$
\end{definition}


\subsection{A framework for inexact first order methods}
\noindent Since  the main algorithm in this paper  is the Nesterov
fast gradient method \cite{Nes:13}, we further introduce an inexact
algorithmic framework based on the method in
\cite{BecTeb:09,Tse:08}, which will be subsequently called in
various ways. Therefore, consider the following general convex
constrained optimization problem with composite objective function:
\begin{equation}
\label{general_comp} F^* =  \min\limits_{z \in Q} \; F(z) \qquad
\left(= \phi(z) + \psi(z) \right),
\end{equation}
where $Q \subseteq \rset^n $ is a simple, convex set, $\phi:\rset^n
\to \rset^{}$ is a  convex function with Lipschitz continuous
gradient  of constant $L_\phi >0$ and $\psi:\rset^n \to
\bar{\rset}^{}$ is a simple, closed, convex function. Using the
definition from \cite{DevGli:14}, given $\delta \geq 0$ and $L>0$,
we assume that the smooth function $\phi$ is equipped with a
\textit{first order inexact $(\delta,L)$-oracle}, i.e. for any $y
\in Q$ we can  compute an approximate function value
$\phi_{\delta,L}(y)$ and an approximate gradient $\nabla
\phi_{\delta,L}(y)$ such that the following inequalities hold:
\begin{equation}
\label{deltaL} 0 \le \phi(x) - \left(\phi_{\delta,L}(y) + \langle
\nabla \phi_{\delta,L}(y), x-y\rangle \right) \le
\frac{L}{2}\norm{x-y}^2 + \delta \qquad \forall x \in Q.
\end{equation}

\noindent Next, we introduce the Inexact Composite Fast Gradient
(ICFG) method for solving the composite optimization problem
\eqref{general_comp} using approximate function values and gradients
$(\phi_{\delta,L}(y), \nabla \phi_{\delta,L}(y))$ satisfying the
first order  $(\delta,L)$-oracle given in \eqref{deltaL}:

\vspace{3pt}

\begin{center}
\framebox{
\parbox{11.5cm}{
\noindent \textbf{ Algorithm ICFG ($\phi, \psi, \delta, L$) }\\
Give $z^0 = w^1 \in \rset^n$ and $\theta_1=1$. For $k\geq 1$ do:
\begin{enumerate}
\item Compute the pair $(\phi_{\delta,L}(w^k),\nabla \phi_{\delta,L}(w^k))$ satisfying \eqref{deltaL}. Update:
\item ${z}^{k}= \arg\min\limits_{z \in Q} \langle \nabla \phi_{\delta,L}(w^k),z - w^k\rangle + \frac{L}{2}\norm{z-w^k}^2 + \psi(z)$
\item $w^{k+1} = z^k + \frac{\theta_k -1 }{\theta_{k+1}} (z^k - z^{k-1})$
\item If a stopping criterion holds, then STOP and \textbf{return}: $(z^k, w^k)$.
\end{enumerate}
}}
\end{center}

\vspace{3pt}

\noindent Note that if we update $\theta_{k+1} = \frac{1 +  \sqrt{1
+ 4 \theta_k^2 }}{2}$ for all $k \geq 1$ and additionally we
consider  $\delta=0$ and $L=L_\phi$, then we recover the well-known FISTA scheme which has been analyzed for the first time
in \cite{BecTeb:09} and subsequently extended in different variants in \cite{Tse:08,Nes:13}.
On the other hand, if
we take $\theta_k = 1$ for all $k \ge 1$ and $\delta=0$, then $z^{k}
= w^{k+1}$ and we recover the ISTA scheme also developed in \cite{BecTeb:09}
and extended in \cite{NedOzd:09,NecNed:13,Nes:13}.
Using the
same reasoning as in \cite{DevGli:14}, we provide in the next
theorem the rate of convergence of Algorithm \textbf{ICFG} for
composite optimization problem \eqref{general_comp} endowed  with a
first order inexact $(\delta,L)$-oracle \eqref{deltaL}. First, let us
denote by $z^*$ an optimal solution of the composite convex  problem
\eqref{general_comp}.

\begin{theorem}\cite{DevGli:14,BecTeb:09}
\label{ifg_rate_conv} Let sequences $(z^k, w^k)_{k \ge 0}$ be
generated by Algorithm \textbf{ICFG} ($\phi,\psi,\delta,L$) for
solving composite  problem \eqref{general_comp} endowed with a first
order inexact $(\delta,L)$-oracle. Then, we have
the following convergence rates in terms of function values:\\
\noindent (i) If we define the average sequence
$\hat{z}^k=\frac{1}{k} \sum\limits_{i=0}^{k-1} z^{i+1}$ and
$\theta_k = 1$ for all $k \ge 1$, then  $\hat{z}^k$ has the
following sublinear convergence rate in terms of function values:
\begin{equation*}
 F(\hat{z}^k) -  F^* \le \frac{ L \norm{z^0 -z^*}^2}{2k} + \delta.
\end{equation*}
\noindent (ii) If we update  $\theta_{k+1} = \frac{1 + \sqrt{1 + 4
\theta_k^2 }}{2}$ for all $k \ge 1$, then the last iterate $z^k$ has
the following sublinear convergence rate in terms of function
values:
\begin{equation*}
 F(z^k) - F^* \le \frac{2 L \norm{z^0 -z^*}^2}{(k+1)^2} + k \delta.
\end{equation*}
\end{theorem}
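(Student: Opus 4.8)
The plan is to reduce both statements to a single \emph{one-step estimate} and then post-process it differently in the two cases. I would first extract the two consequences of the inexact oracle \eqref{deltaL}: its left inequality provides the global lower model $\phi_{\delta,L}(y)+\iprod{\nabla\phi_{\delta,L}(y),x-y}\le\phi(x)$ for all $x\in Q$, while its right inequality, evaluated at the next iterate, upper bounds $\phi$ by the quadratic model plus the error $\delta$. Since the subproblem in Step~2 minimizes over $Q$ a function that is strongly convex with modulus $L$, its optimality condition yields a three-point inequality relating $z^k$ (produced from $w^k$) to an arbitrary $x\in Q$. Combining that prox-optimality inequality with the oracle's upper bound on $\phi(z^k)$ and its lower bound on the linear model at $x$ produces the engine of the whole argument,
\[
F(z^k)-F(x)\le \tfrac{L}{2}\left(\norm{x-w^k}^2-\norm{x-z^k}^2\right)+\delta \qquad \forall x\in Q \qquad (\star).
\]

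For part (i), I would use that $\theta_k=1$ forces the momentum correction to vanish, so $w^{k+1}=z^k$ and hence $w^k=z^{k-1}$; the scheme reduces to plain inexact proximal gradient. Setting $x=z^*$ in $(\star)$ gives the telescoping bound $F(z^k)-F^*\le \tfrac{L}{2}\bigl(\norm{z^*-z^{k-1}}^2-\norm{z^*-z^k}^2\bigr)+\delta$. Summing from $1$ to $k$, discarding the terminal negative norm, and applying Jensen's inequality to the convex $F$ at the average $\hat z^k=\tfrac{1}{k}\sum_{i=1}^k z^i$ yields $F(\hat z^k)-F^*\le \tfrac{1}{k}\sum_{i=1}^k\bigl(F(z^i)-F^*\bigr)\le \tfrac{L\norm{z^0-z^*}^2}{2k}+\delta$, which is the claim.

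For part (ii), the accelerated case, I would apply $(\star)$ at the two points $x=z^{k-1}$ and $x=z^*$, take the combination with weights $\theta_k-1$ and $1$, and multiply by $\theta_k$. Writing $a_k:=F(z^k)-F^*$, the left-hand side collapses to $\theta_k^2 a_k-\theta_{k-1}^2 a_{k-1}$ once I invoke the defining relation $\theta_k(\theta_k-1)=\theta_{k-1}^2$. For the right-hand side I would use the bias--variance identity $\sum_i\lambda_i\norm{p_i-a}^2-\sum_i\lambda_i\norm{p_i-b}^2=\Lambda\bigl(\norm{\bar p-a}^2-\norm{\bar p-b}^2\bigr)$, with $\bar p$ the weighted mean and $\Lambda$ the total weight, so that after the $\theta_k$ scaling the mixed distances condense into $\norm{\zeta^{k-1}-z^*}^2-\norm{\zeta^k-z^*}^2$, where $\zeta^k:=\theta_k z^k-(\theta_k-1)z^{k-1}$.

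The crux, and the step I expect to be the main obstacle, is verifying that the momentum update $w^{k+1}=z^k+\tfrac{\theta_k-1}{\theta_{k+1}}(z^k-z^{k-1})$ is exactly the relation that turns the $w^k$-distance into $\norm{\zeta^{k-1}-z^*}^2$; this is a pure algebraic identity to be checked by substituting the definition of $w^k$ and simplifying. Once the telescoping form $\theta_k^2 a_k-\theta_{k-1}^2 a_{k-1}\le \tfrac{L}{2}\bigl(\norm{\zeta^{k-1}-z^*}^2-\norm{\zeta^k-z^*}^2\bigr)+\theta_k^2\delta$ is in hand (with the boundary convention $\theta_0=0$ and $\zeta^0=z^0=w^1$), I would sum from $1$ to $k$, drop the negative terminal norm, and control the accumulated error by $\delta\sum_{i=1}^k\theta_i^2\le \delta\,k\,\theta_k^2$, using monotonicity of the increasing sequence $(\theta_i)$. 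Dividing by $\theta_k^2$ and using the standard lower bound $\theta_k\ge (k+1)/2$ then gives $a_k\le \tfrac{2L\norm{z^0-z^*}^2}{(k+1)^2}+k\delta$, completing the proof.
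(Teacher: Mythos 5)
Your proof is correct, and since the paper states this theorem without proof (importing it from \cite{DevGli:14,BecTeb:09}), your argument is essentially the standard one from those references: the one-step estimate $(\star)$ obtained by combining the two sides of the $(\delta,L)$-oracle with the strong convexity of the prox subproblem, followed by telescoping plus Jensen in case (i), and by the weighted combination at $z^{k-1}$ and $z^*$, the identity $\theta_k(\theta_k-1)=\theta_{k-1}^2$, and the telescoping in the auxiliary points $\zeta^k=\theta_k z^k-(\theta_k-1)z^{k-1}$ in case (ii). All the constants check out, including the error accumulation $\delta\sum_{i\le k}\theta_i^2/\theta_k^2\le k\delta$ and the bound $\theta_k\ge (k+1)/2$, so nothing further is needed.
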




\section{Inexact first order Lagrangian methods}
\label{sec_Lagrangian} In this section we analyze the computational
complexity of inexact first order Lagrangian methods for solving the
cone constrained  convex problem \eqref{problem}.  Since we use the
dual framework, we require  the following standard assumption for
dual algorithms, valid only in this Section \ref{sec_Lagrangian}:
\begin{assumption}\label{assump_bound_multipliers}
 {There exists a Lagrange multiplier $x^* \in
\mathcal{K}^*$  for the conic convex problem \eqref{problem} that
closes the duality gap.}
\end{assumption}

\noindent Assumption \ref{assump_bound_multipliers} implies the
existence of a bounded optimal Lagrange multiplier, that is
$\norm{x^*} < \infty$, and it holds for \eqref{problem} whenever a
Slater type condition is valid, i.e. there exists $\bar{u} \in
\text{relint}(U)$ such that $G \bar{u} + g \in
\text{relint}(\mathcal{K})$.


\subsection{Preliminaries}
\noindent The strongly convex case,  i.e. when the objective
function $f$ in problem \eqref{problem} satisfies Assumption
\ref{strong_conv} with $\sigma_f>0$, has been extensively studied in
the literature, see e.g.
\cite{NecPat:14,QuoCev:14,NecNed:13,YurQuo:15}. Thus, in the rest of
our paper, unless it is explicitly stated, we assume that the
function $f$ is convex, i.e. it satisfies Assumption
\ref{strong_conv} with $\sigma_f=0$. In the general convex case, the
dual function, denoted $d$, is nonsmooth, and thus  dual first order
methods,  such as Algorithm \textbf{ICFG}, cannot be applied. In
order to be able to apply dual first order algorithms, our approach
relies on the combination between smoothing techniques and duality.
First, we introduce some notations. We note that the problem
\eqref{problem} can be reformulated equivalently as:
\begin{equation}
\label{problem2} \min_{u,s} \; f(u)  \quad \text{s.t.} \;\;  u \in
U, \; s \in \mathcal{K}, \; Gu+g=s.
\end{equation}
Thus, the Lagrangian and the dual function of the convex problem
\eqref{problem2}   are given  by:
\begin{equation*}
  \mathcal{L}(u,s,x) = f(u) + \langle x, Gu+g-s \rangle \quad \text{and} \quad  d(x) = \min\limits_{u \in U, s \in \mathcal{K}} \; \mathcal{L}(u,s,x).
\end{equation*}
 {Assumption \ref{assump_bound_multipliers} states
that there exists a Lagrange multiplier  $x^* \in \mathcal{K}^*$
such that $f^* = d(x^*)$ and thus  the  convex problem
\eqref{problem} is equivalent with solving the dual formulation}:
\begin{align}
\label{classic dual} f^* = \max_{x \in \rset^m} d(x).
\end{align}
We denote with $X^*$ the set of optimal solutions of the dual
problem \eqref{classic dual}. Various dual subgradient schemes have
been developed for solving \eqref{classic dual} with $\epsilon$
accuracy, with overall complexities of  order
$\mathcal{O}\left(\frac{1}{\epsilon^2}\right)$
\cite{NedOzd:09,Nes:14}. However, under additional mild assumptions,
we aim in this paper at improving the iteration complexity required
for solving the conic optimization problem \eqref{problem} using the
dual formulation. First, we rewrite the dual function $d$ in a novel
way as a composite function:
\begin{equation}
\label{dual_separation} d(x) = \underbrace{\min\limits_{u \in U} \;
[f(u) + \langle x, Gu+g \rangle ] }_{d_U(x)} \;+\;
\underbrace{\min\limits_{s \in \mathcal{K}} \; \langle
-s,x\rangle}_{d_{\mathcal{K}}(x)} = d_U(x) + d_{\mathcal{K}}(x).
\end{equation}
The function $d_{\mathcal{K}}(x)$ is the support  function of the
cone $\mathcal{K}$ and, by the definition of the polar cone
$\mathcal{K}^*$, also represents the indicator function of
$\mathcal{K}^*$. From our knowledge there are two widely known
smoothing strategies to obtain an approximate dual function with
Lipschitz continuous gradient. They  are based on the following
modified Lagrangian and
dual functions:\\
\noindent$(i)$ Augmented Lagrangian smoothing  \cite{AybIye:13,
LanMon:15,NedNec:14,QuoCev:14}:
\begin{align*}
  \mathcal{L}^{\text{ag}}_{\mu} (u,s,x) &= f(u) + \langle x, Gu+g-s \rangle + \frac{\mu}{2}\norm{Gu+g-s}^2 \\
 d^{\text{ag}}_{\mu} (x) &= \min\limits_{u \in U, s \in \mathcal{K}} \; \mathcal{L}^{\text{ag}}_{\mu}(u,s,x).
\end{align*}
\noindent $(ii)$ Nesterov smoothing
\cite{BotHen:15,BotHen:15o,NecSuy:08,Nes:051,QuoNec:15,YurQuo:15}:
\begin{align*}
  \mathcal{L}^{\text{ns}}_{\mu}(u,s,x) &= f(u) + \langle x, Gu+g-s \rangle + \frac{\mu}{2}(\norm{u}^2 + \norm{s}^2) \\
 d^{\text{ns}}_{\mu}(x) &= \min\limits_{u \in U, s \in \mathcal{K}} \; \mathcal{L}^{\text{ns}}_{\mu}(u,s,x).
\end{align*}
Note that, following the reasoning from
\cite{Nes:051,NecSuy:08,QuoNec:15}, the Nesterov smoothing
approximation $d^{\text{ns}}_{\mu}(x)$ requires the boundedness of
the primal feasible set $\mathcal{K} \times U$. Thus, the general
convex cone $\mathcal{K}$  induces difficulties in using this
strategy. In Section \ref{subsec_nssmoothing} we present a modified
Nesterov smoothing technique which is able to cope with linear conic
constraints and unbounded feasible cone $\mathcal{K}$ based on the
new composite reformulation  \eqref{dual_separation}.


\subsection{Inexact first order methods for augmented Lagrangian smoothing}
In this section, we analyze the iteration complexity of the inexact
first order methods for augmented Lagrangian smoothing, under
Assumption \ref{strong_conv} with $\sigma_f = 0$, Assumption
\ref{simple_set} and Assumption \ref{assump_bound_multipliers}. The
inexact gradient Lagrangian method is equivalent with the classical
augmented Lagrangian algorithm, namely the application of the
inexact gradient method on the augmented dual function. The second
first order Lagrangian method we analyze is the inexact fast
gradient Lagrangian method, which is based on the application of the
fast gradient method on the augmented  dual function.  We start with
the classical augmented Lagrangian setting, i.e. we define
\cite{RocWet:98}:
\begin{align*}
 \mathcal{L}^{\text{ag}}_{\mu} (u,x) \!=\! f(u) +
\frac{\mu}{2}\text{dist}_{\mathcal{K}} \!\left( \!
Gu+g+\frac{1}{\mu}x \! \right)^2 \!- \frac{1}{2\mu}\norm{x}^2 \;\;
\text{and} \;\;
 d^{\text{ag}}_{\mu} (x) \!=\! \min\limits_{u \in U} \; \mathcal{L}^{\text{ag}}_{\mu}(u,x).
\end{align*}
\noindent Note that, the augmented dual function represents a pure
Moreau approximation of  the original dual function:
\begin{equation*}
d^{\text{ag}}_{\mu} (x) = \max\limits_{z \in \rset^m} \;  d(z) -
\frac{1}{2\mu}\norm{z-x}^2 = \max\limits_{z \in \mathcal{K}^*} \;
d_U(z) - \frac{1}{2\mu}\norm{z-x}^2.
\end{equation*}
Further, we observe that partial gradient of
$\mathcal{L}_{\mu}^{\text{ag}}$  w.r.t. $x$  is given by:
\begin{align*}
\nabla_x \mathcal{L}^{\text{ag}}_{\mu}(u,x) &= Gu + g  - \left[Gu+g
+ \frac{1}{\mu}x \right]_{\mathcal{K}}.
\end{align*}
For any $x \in \rset^m$ we denote a primal exact solution by
$u_{\mu}^*(x) \in \arg\min\limits_{u \in U}
\mathcal{L}_{\mu}^{\text{ag}}(u,x)$. It is well-known, see e.g.
\cite{RocWet:98}, that the gradient of augmented dual function
$d^{\text{ag}}_{\mu}(x)$ satisfies:
$$\nabla d^{\text{ag}}_{\mu}(x) = Gu_{\mu}^*(x) + g  -
\left[Gu_{\mu}^*(x) + g + \frac{1}{\mu}x  \right]_{\mathcal{K}},$$
and  additionally it is Lipschitz continuous  with constant
$L_{\text{d}} = \frac{1}{\mu}$. Moreover,   the resulting augmented
dual problem, given by:
\begin{equation*}
f^* =  \max\limits_{x \in \rset^m} \; \; d^{\text{ag}}_{\mu}(x),
\quad \text{satisfies} \quad X^* = \arg\max\limits_{x \in \rset^m}
\; d^{\text{ag}}_{\mu}(x).
\end{equation*}
Usually, it is difficult to compute in most of the practical
applications
 the optimal solution $u_{\mu}^*(x)$ of the inner problem $\min\limits_{u \in U}
\mathcal{L}_{\mu}^{\text{ag}}(u,x)$ and we can obtain only an
approximate solution. Assume that we solve inexactly the inner
problem and obtain an approximate solution $u_{\mu}(x) \in U$ which,
for a given accuracy $\delta>0$, satisfies:
\begin{equation}\label{func_approx2}
 0 \le  \mathcal{L}^{\text{ag}}_{\mu}(u_{\mu}(x),x) - d^{\text{ag}}_{\mu}(x) \le
  \delta \quad \forall x \in \rset^m.
\end{equation}
Then, we can construct a first order inexact oracle for the
augmented dual function:

\begin{theorem}\label{th_inexact_oracle}
Let $\mu,\delta>0$, then we have the following first order inexact
 $(3\delta, 2L_{\text{d}})$-oracle for the augmented dual function $d^{\text{ag}}_{\mu}$:
\begin{equation}\label{inexact_descentlemma}
0 \le \mathcal{L}^{\text{ag}}_{\mu}(u_{\mu}(y),y) + \langle \nabla_x
\mathcal{L}^{\text{ag}}_{\mu}(u_{\mu}(y),y), x-y\rangle
-d^{\text{ag}}_{\mu}(x) \le  \frac{2L_{\text{d}}}{2}\norm{x-y}^2 +
 {3\delta},
\end{equation}
for all $x,y \in \rset^m$, where the approximate solution
$u_{\mu}(y)$ satisfies \eqref{func_approx2} and $L_d =
\frac{1}{\mu}$.
\end{theorem}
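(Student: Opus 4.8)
The statement is precisely the assertion that the convex function $-d^{\text{ag}}_{\mu}$ admits a first order inexact $(3\delta,2\Ld)$-oracle at every $y$, with approximate value $-\mathcal{L}^{\text{ag}}_{\mu}(u_{\mu}(y),y)$ and approximate gradient $-\nabla_x\mathcal{L}^{\text{ag}}_{\mu}(u_{\mu}(y),y)$; written for the concave $d^{\text{ag}}_{\mu}$ it splits into the stated lower and upper inequalities, which I would prove separately. The plan rests on two structural facts. First, for each fixed $u$ the map $x\mapsto\mathcal{L}^{\text{ag}}_{\mu}(u,x)$ is concave with $\Ld$-Lipschitz gradient: its Hessian in $x$ equals $\tfrac1\mu$ times the (monotone, between $0$ and $I$) derivative of $I-[\,\cdot\,]_{\mathcal{K}}$ minus $\tfrac1\mu I$, hence lies between $-\Ld I$ and $0$. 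Second, the already quoted fact that $\nabla d^{\text{ag}}_{\mu}$ is $\Ld$-Lipschitz and $d^{\text{ag}}_{\mu}$ concave, with $\nabla d^{\text{ag}}_{\mu}(y)=\nabla_x\mathcal{L}^{\text{ag}}_{\mu}(u^*_{\mu}(y),y)$.

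For the lower bound I would not use the exact solution at all. Since $u_{\mu}(y)\in U$ is feasible in the minimization defining $d^{\text{ag}}_{\mu}(x)$, we have $d^{\text{ag}}_{\mu}(x)\le\mathcal{L}^{\text{ag}}_{\mu}(u_{\mu}(y),x)$, and concavity of $\mathcal{L}^{\text{ag}}_{\mu}(u_{\mu}(y),\cdot)$ gives $\mathcal{L}^{\text{ag}}_{\mu}(u_{\mu}(y),x)\le\mathcal{L}^{\text{ag}}_{\mu}(u_{\mu}(y),y)+\langle\nabla_x\mathcal{L}^{\text{ag}}_{\mu}(u_{\mu}(y),y),x-y\rangle$. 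Chaining the two yields the left inequality with no error term whatsoever.

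For the upper bound I would start from the descent lemma for the concave $\Ld$-smooth function $d^{\text{ag}}_{\mu}$, that is $d^{\text{ag}}_{\mu}(x)\ge d^{\text{ag}}_{\mu}(y)+\langle\nabla d^{\text{ag}}_{\mu}(y),x-y\rangle-\tfrac{\Ld}{2}\norm{x-y}^2$. Substituting this lower bound for $d^{\text{ag}}_{\mu}(x)$ leaves three contributions: a function-value gap $\mathcal{L}^{\text{ag}}_{\mu}(u_{\mu}(y),y)-d^{\text{ag}}_{\mu}(y)\le\delta$ controlled by \eqref{func_approx2}; a gradient-error product $\langle e,x-y\rangle$ with $e=\nabla_x\mathcal{L}^{\text{ag}}_{\mu}(u_{\mu}(y),y)-\nabla d^{\text{ag}}_{\mu}(y)$, which I would split by Young's inequality into $\tfrac{\Ld}{2}\norm{x-y}^2+\tfrac{1}{2\Ld}\norm{e}^2$; and the residual $\tfrac{\Ld}{2}\norm{x-y}^2$. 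The two quadratic pieces assemble into the required $\tfrac{2\Ld}{2}\norm{x-y}^2$, so everything reduces to bounding $\norm{e}^2$ by a multiple of $\Ld\delta$.

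That gradient-error bound is the crux, and the main obstacle. Writing $h(w)=\tfrac{\mu}{2}\text{dist}_{\mathcal{K}}(w+\tfrac1\mu y)^2$, one has $\mathcal{L}^{\text{ag}}_{\mu}(u,y)=f(u)+h(Gu+g)-\tfrac{1}{2\mu}\norm{y}^2$ with $h$ convex and $\mu$-smooth, and $e=\tfrac1\mu(\nabla h(w_{\mu})-\nabla h(w^*))$ where $w_{\mu}=Gu_{\mu}(y)+g$ and $w^*=Gu^*_{\mu}(y)+g$. I would apply the smooth-convex inequality $h(w_{\mu})\ge h(w^*)+\langle\nabla h(w^*),w_{\mu}-w^*\rangle+\tfrac{1}{2\mu}\norm{\nabla h(w_{\mu})-\nabla h(w^*)}^2$, add the subgradient inequality for the convex $f$ at $u^*_{\mu}(y)$, and invoke first-order optimality of $u^*_{\mu}(y)$ over $U$ (so that the cross term $\langle\xi+G^{T}\nabla h(w^*),u_{\mu}(y)-u^*_{\mu}(y)\rangle\ge0$ drops out). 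What survives is $\delta\ge\mathcal{L}^{\text{ag}}_{\mu}(u_{\mu}(y),y)-d^{\text{ag}}_{\mu}(y)\ge\tfrac{1}{2\mu}\norm{\nabla h(w_{\mu})-\nabla h(w^*)}^2$, i.e. $\norm{e}^2\le2\Ld\delta$, whence $\tfrac{1}{2\Ld}\norm{e}^2\le\delta$ and the total oracle error is at most $2\delta\le3\delta$. The difficulty here is exactly that $\mathcal{L}^{\text{ag}}_{\mu}(\cdot,y)$ is \emph{not} strongly convex in $u$, so a naive firm-nonexpansiveness argument only yields $\norm{e}\le\norm{G(u_{\mu}(y)-u^*_{\mu}(y))}$, which need not be small; converting a function-value accuracy $\delta$ into a gradient accuracy requires precisely the composite structure $f+h\circ(G\,\cdot+g)$ together with the optimality of the exact inner minimizer.
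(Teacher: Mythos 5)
Your proposal is correct, and for the two outer layers it follows the paper exactly: the left inequality via $d^{\text{ag}}_{\mu}(x)\le\mathcal{L}^{\text{ag}}_{\mu}(u_{\mu}(y),x)$ plus concavity of $\mathcal{L}^{\text{ag}}_{\mu}(u_{\mu}(y),\cdot)$, and the right inequality via the descent lemma for the $L_{\text{d}}$-smooth concave $d^{\text{ag}}_{\mu}$, the function-value gap $\le\delta$, and Young's inequality applied to $\langle e,x-y\rangle$. Where you genuinely diverge is in the crux you correctly identified, the bound on $\norm{e}$. The paper stays entirely in the dual space: for fixed $u\in U$ the function $h(x)=\mathcal{L}^{\text{ag}}_{\mu}(u,x)-d^{\text{ag}}_{\mu}(x)$ is nonnegative with $2L_{\text{d}}$-Lipschitz gradient, so the elementary inequality $h(x)-\inf h\ge\frac{1}{4L_{\text{d}}}\norm{\nabla h(x)}^2$ evaluated at $u=u_{\mu}(x)$ gives $\norm{e}^2\le 4L_{\text{d}}\delta$ directly, with no reference to the structure of the inner problem. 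You instead work in the primal space, combining cocoercivity of the $\mu$-smooth convex penalty $h(w)=\frac{\mu}{2}\text{dist}_{\mathcal{K}}(w+\frac{1}{\mu}y)^2$ with the subgradient inequality for $f$ and the first-order optimality of $u^*_{\mu}(y)$ over $U$, obtaining the sharper $\norm{e}^2\le 2L_{\text{d}}\delta$ and hence a total oracle error of $2\delta$ rather than the paper's $3\delta$. Each approach buys something: yours yields a tighter constant and makes transparent exactly which structural feature (the composite form $f+h\circ(G\cdot+g)$ and exactness of the inner minimizer, not strong convexity) converts function-value accuracy into gradient accuracy; the paper's is shorter, avoids the subdifferential sum rule and optimality conditions for the inner problem entirely (you should at least remark that the qualification condition holds because $h\circ(G\cdot+g)$ is finite and differentiable everywhere), and, being a statement only about nonnegative smooth functions of the dual variable, transfers verbatim to other smoothings — indeed the paper reuses it word for word for the Nesterov-smoothed dual in Section 3.4.
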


\begin{proof}
For the left hand side  inequality of \eqref{inexact_descentlemma},
we observe that:
\begin{align*}
\mathcal{L}_{\mu}^{\text{ag}}(u_{\mu}(y),y) +  \langle \nabla_x
\mathcal{L}_{\mu}^{\text{ag}} (u_{\mu}(y),y), x - y \rangle & \geq
\mathcal{L}_{\mu}^{\text{ag}}(u_{\mu}(y),x) \ge \min\limits_{u \in
U} \; \mathcal{L}_{\mu}^{\text{ag}}(u,x)  = d_{\mu}^{\text{ag}}(x).
\end{align*}
For the right hand side inequality of \eqref{inexact_descentlemma},
note  that for any fixed $u \in U$ the function $h(x) =
\mathcal{L}_{\mu}^{\text{ag}}(u,x) - d_{\mu}^{\text{ag}}(x)$ has
Lipschitz gradient with constant  {$L_{\text{h}} =
2/\mu$} and $h(x) \ge 0$ for all $x \in \rset^m$. Therefore, using
the notation $L_{\text{d}} = 1/\mu$, we have:
\begin{align*}
h(x) - \min_{x \in \rset^m} h(x) \ge
\frac{1}{ {2L_{\text{h}}}} \norm{\nabla h(x)}^2  =
\frac{1}{ {4L_{\text{d}}}} \norm{\nabla_x
\mathcal{L}_{\mu}^{\text{ag}}(u,x) - \nabla
d_{\mu}^{\text{ag}}(x)}^2 \qquad \forall u \in U.
\end{align*}
Taking $u =  u_{\mu}(x)$ and using the definition of $u_{\mu}(x)$,
we have $h(x) - \min_{x \in \rset^m} h(x) \leq h(x) \leq \delta$ and
obtain the following approximate gradient relation:
\begin{equation}\label{grad_approx}
\norm{\nabla_x \mathcal{L}_{\mu}^{\text{ag}}(u_{\mu}(x),x) - \nabla
d_{\mu}^{\text{ag}}(x)}  = \norm{Gu_{\mu}(x)  - Gu_{\mu}^*(x)}\le
\sqrt{ {4\delta L_{\text{d}}}}.
\end{equation}
From the Lipschitz continuity of $\nabla d_{\mu}^{\text{ag}}$,
\eqref{func_approx2} and \eqref{grad_approx}, we have that for any
$x,y \in \rset^m$ the following relations hold:
\begin{align*}
& d_{\mu}^{\text{ag}}(x) \ge d_{\mu}^{\text{ag}}(y) + \langle \nabla d_{\mu}^{\text{ag}}(y), x - y\rangle - \frac{L_{\text{d}}}{2}\norm{x - y}^2 \\
 & \overset{\eqref{func_approx2}}{\ge} \mathcal{L}_{\mu}^{\text{ag}}(u_{\mu}(y),y) + \langle \nabla d_{\mu}^{\text{ag}}(y), x - y\rangle - \frac{L_{\text{d}}}{2}\norm{x - y}^2 -\delta\\
 & = \mathcal{L}_{\mu}^{\text{ag}}(u_{\mu}(y),y) + \langle \nabla_x \mathcal{L}_{\mu}^{\text{ag}}(u_{\mu}(y),y), x - y \rangle - \frac{L_{\text{d}}}{2}\norm{x - y}^2 -\delta\\
 & \qquad \qquad \qquad    + \langle \nabla d_{\mu}^{\text{ag}}(y) - \nabla_x \mathcal{L}_{\mu}^{\text{ag}}(u_{\mu}(y),y), x - y \rangle \\
 & \ge \mathcal{L}_{\mu}^{\text{ag}}(u_{\mu}(y),y) + \langle \nabla_x \mathcal{L}_{\mu}^{\text{ag}}(u_{\mu}(y),y), x - y \rangle - \frac{L_{\text{d}}}{2}\norm{x - y}^2 -\delta\\
 & \qquad \qquad \qquad    - \norm{\nabla d_{\mu}^{\text{ag}}(y) - \nabla_x \mathcal{L}_{\mu}^{\text{ag}}(u_{\mu}(y),y)} \norm{x - y} \\
 & \overset{\eqref{grad_approx}}{\ge} \mathcal{L}_{\mu}^{\text{ag}}(u_{\mu}(y),y) + \langle \nabla_x \mathcal{L}_{\mu}^{\text{ag}}(u_{\mu}(y),y), x - y \rangle - \frac{L_{\text{d}}}{2}\norm{x - y}^2 -\delta
- \sqrt{ {4\delta L_{\text{d}}}} \norm{x - y}.
\end{align*}
On the other hand, for any positive pair of constants $(t, \alpha)$
we have:  {$ \alpha t \le \frac{t^2}{2} +
\frac{\alpha^2}{2}$}. Thus, taking  {$t =
\sqrt{L_{\text{d}}}\norm{x - y}$ and $\alpha = 2\sqrt{\delta} $} in
the previous inequalities, we obtain the right hand side inequality
of the theorem.
\end{proof}

\noindent The relation \eqref{inexact_descentlemma} implies that the
augmented dual function $d_{\mu}^{\text{ag}}$ is  smooth and  is
equipped with a first order inexact $(3\delta,
2L_{\text{d}})$-oracle having $\phi_{\delta,L} (x) =
\mathcal{L}_{\mu}^{\text{ag}}(u_{\mu}(x),x)$ and $\nabla
\phi_{\delta,L} (x)= \nabla_x
\mathcal{L}_{\mu}^{\text{ag}}(u_{\mu}(x),x) = Gu_{\mu}(x)+g$. It is
important to note that many previous results on augmented Langragian
methods require solving  the inner problem with much higher inner
accuracy of order $\mathcal{O}(\delta^2)$ (see e.g.
\cite{AybIye:13,DevGli:14,LanMon:15,NedNec:14,QuoCev:14}), i.e.:
$$ \mathcal{L}_{\mu}^{\text{ag}}(u_{\mu}(x),x) - d_{\mu}^{\text{ag}}(x) \le
\mathcal{O}(\delta^2).$$ It is obvious that our approach here is
less conservative, imposing to solve the inner problem with less
inner accuracy of order $\delta$ as in \eqref{func_approx2}. As we
will see in the sequel, this new and important  result from Theorem
\ref{th_inexact_oracle} will have a huge impact on the
computational complexity of our methods compared to those given in
the previous papers. In particular, the first order inexact oracle
derived in \cite{DevGli:14} for augmented Lagrangian dual function
is more conservative than the one from Theorem
\ref{th_inexact_oracle} and thus, its direct application will lead
to much worse computational complexities than the ones we obtained
in the present paper based on Theorem \ref{th_inexact_oracle}.

\noindent Given the pair $(x^k,y^k)_{k \ge 0}$ generated by
Algorithm \textbf{ICFG}, in the following two sections we provide
complexity estimates related to the convergence of the average
primal point $({\hat u}^k)_{k \geq 0}$ defined in a compact way as
follows:
\begin{equation}\label{average_seq}
\hat{u}^k =
\frac{1}{S^{\theta}_k}\sum\limits_{i=0}^{ {k-1}}
\theta_i u^i, \quad \text{where} \quad  S^{\theta}_{k} =
\sum\limits_{i=0}^{ {k-1}} \theta_k \;\;\; \text{and}
\;\;\; u^i = u_{\mu}(x^i).
\end{equation}
 {Notice that the weights  $\theta_k$ are either
constant, i.e. $\theta_k =1$ for all $k \ge 0$, or updated as
$\theta_0=0$, $\theta_1=1$ and $\theta_{k+1}  = \frac{1+\sqrt{1 + 4
\theta_k^2}}{2}$ for all $k \ge 1$.}



\subsubsection{Inexact gradient augmented Lagrangian  method}
We now analyze the overall complexity of the classical augmented
Lagrangian method in terms of projections onto the cone
$\mathcal{K}$ and simple feasible set $U$, under various assumptions
on the objective function $f$. A direct consequence of Theorem
\ref{th_inexact_oracle}  and Theorem \ref{ifg_rate_conv} is the
following iteration complexity (in terms of outer iterations) of the
inexact gradient augmented Lagrangian method.

\begin{corollary}\label{corr_icfg_grad}
Under Assumptions \ref{strong_conv} with $\sigma_f = 0$,
\ref{simple_set} and \ref{assump_bound_multipliers}, let $\mu,
\delta > 0$ and $(x^k)_{k \ge 0}$ be the sequence generated by
Algorithm \textbf{ICFG} $(d^{\text{ag}}_{\mu},0,3\delta,
2L_{\text{d}})$ with $\theta_k=1$ for all $k \ge 0$ and
$L_{\text{d}} = \frac{1}{\mu}$.  Define the average sequence
$(\hat{x}^k)_{k \ge 1}$ by $\hat{x}^k =
\frac{1}{k}\sum\limits_{i=0}^{k-1} x^{i+1} $. Then,  we have the
following convergence  estimate on dual suboptimality:
\begin{equation*}
 f^* - d_{\mu}^{\text{ag}}(\hat{x}^k) \le \frac{L_{\text{d}} R_{\text{d}}^2}{k} + 3 \delta.
\end{equation*}
\end{corollary}
\noindent Note that the above convergence rate is linked only  to
the number of the outer iterations and omits the complexity of
solving the inner subproblem at step $1$ of \textbf{ICFG}. Before
estimating the total complexity of the process containing the inner
and outer levels, we provide convergence rates for the primal
infeasibility and suboptimality.

\begin{theorem}\label{aug_grad_outer}
Under  Assumptions \ref{strong_conv} with $\sigma_f=0$,
\ref{simple_set} and \ref{assump_bound_multipliers}, let
$\mu,\delta>0$ and $(x^k)_{k \ge 0}$ be the sequence generated by
Algorithm
\textbf{ICFG}$(d^{\text{ag}}_{\mu},0,3\delta,2L_{\text{d}})$ with
$\theta_k = 1$ for all $k \ge 0$ and $L_{\text{d}} = \frac{1}{\mu}$.
Let $u^i = u_{\mu}(x^i)$ be such that
$\mathcal{L}_{\mu}^{\text{ag}}(u^i,x^i) - d_{\mu}^{\text{ag}}(x^i)
\le  \delta$ for $0 \le i \le k$. Then, the average primal sequence
$ ({\hat u}^k)_{k \geq 1}$ defined by \eqref{average_seq} satisfies
the following relations:\\
$(i)$ The primal infeasibility is bounded sublinearly as
 follows:
\begin{equation*}
\text{dist}_{\mathcal{K}}(G\hat{u}^k+g) \le
\frac{4L_{\text{d}}R_{\text{d}}}{k} +
\sqrt{\frac{12L_{\text{d}}\delta}{k}}.
\end{equation*}
$(ii)$ The primal suboptimality gap is bounded by:
\begin{equation*} 
- \frac{4L_{\text{d}}R_{\text{d}}^2}{k} - R_{\text{d}}
\sqrt{\frac{12L_{\text{d}}\delta}{k}} \le f(\hat{u}^k) -f^* \le
\frac{L_{\text{d}}\norm{x^0}^2}{k} + 3\delta.
\end{equation*}
\end{theorem}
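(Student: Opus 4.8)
The plan is to recover primal guarantees from the dual gradient iterates, exploiting two features of the $\theta_k=1$ run of Algorithm \textbf{ICFG}: it reduces to plain inexact gradient ascent on $d^{\text{ag}}_{\mu}$, and the computed inexact gradient at $x^i$ is exactly the primal residual. First I would introduce, for each $i$, the inner minimizer of the $s$-block, $s^i := \left[Gu^i+g+\tfrac1\mu x^i\right]_{\mathcal{K}}\in\mathcal{K}$, so that $r^i := \nabla_x\mathcal{L}^{\text{ag}}_{\mu}(u^i,x^i)=Gu^i+g-s^i$. Since each $s^i\in\mathcal{K}$ and $\mathcal{K}$ is convex, the average $\hat s^k=\frac1k\sum_{i=0}^{k-1}s^i$ lies in $\mathcal{K}$, whence $\text{dist}_{\mathcal{K}}(G\hat u^k+g)\le\norm{G\hat u^k+g-\hat s^k}=\norm{\frac1k\sum_{i=0}^{k-1}r^i}$. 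Both parts thus reduce to controlling the averaged inexact gradient $\frac1k\sum_i r^i$.

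The key simplification is that with $\theta_k=1$, $Q=\rset^m$ and no nonsmooth part, Step 2 of \textbf{ICFG} becomes the explicit gradient-ascent step $x^{i+1}=x^i+\frac{1}{2L_{\text{d}}}r^i$, so the residuals telescope: $\sum_{i=0}^{k-1}r^i=2L_{\text{d}}(x^k-x^0)$. It then remains to bound $\norm{x^k-x^0}$, where I would run the standard Lyapunov argument from the $(3\delta,2L_{\text{d}})$-oracle \eqref{inexact_descentlemma}: combining the lower and upper oracle inequalities at the gradient step gives $\norm{x^{i+1}-x^*}^2\le\norm{x^i-x^*}^2+\frac{3\delta}{L_{\text{d}}}$ for any dual optimum $x^*$, which telescopes to $\norm{x^k-x^*}^2\le R_{\text{d}}^2+\frac{3k\delta}{L_{\text{d}}}$. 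Taking square roots (with $\sqrt{a+b}\le\sqrt a+\sqrt b$) and the triangle inequality yields $\norm{x^k-x^0}\le 2R_{\text{d}}+\sqrt{3k\delta/L_{\text{d}}}$, and multiplying by $2L_{\text{d}}/k$ produces exactly $\frac{4L_{\text{d}}R_{\text{d}}}{k}+\sqrt{\frac{12L_{\text{d}}\delta}{k}}$, which is $(i)$.

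For the lower bound in $(ii)$ I would invoke Lagrangian weak duality through the optimal multiplier $x^*\in\mathcal{K}^*$ supplied by Assumption \ref{assump_bound_multipliers}: since $f^*=\min_{u\in U,\,s\in\mathcal{K}}\left[f(u)+\iprod{x^*,Gu+g-s}\right]$, evaluating at the feasible pair $(\hat u^k,\hat s^k)$ gives $f^*\le f(\hat u^k)+\iprod{x^*,\frac1k\sum_i r^i}$, and Cauchy--Schwarz combined with the estimate of $(i)$ and $\norm{x^*}\le R_{\text{d}}$ yields the stated lower bound. For the upper bound I would use convexity, $f(\hat u^k)\le\frac1k\sum_i f(u^i)$, and the augmented-Lagrangian identity $f(u^i)=\mathcal{L}^{\text{ag}}_{\mu}(u^i,x^i)-\iprod{x^i,r^i}-\frac{\mu}{2}\norm{r^i}^2$. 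Bounding $\mathcal{L}^{\text{ag}}_{\mu}(u^i,x^i)\le d^{\text{ag}}_{\mu}(x^i)+\delta\le f^*+\delta$ via \eqref{func_approx2} and $d^{\text{ag}}_{\mu}\le f^*$, and substituting $r^i=2L_{\text{d}}(x^{i+1}-x^i)$, the remaining sum $-\sum_i\left[\iprod{x^i,r^i}+\frac{\mu}{2}\norm{r^i}^2\right]$ telescopes to $L_{\text{d}}\bigl(\norm{x^0}^2-\norm{x^k}^2\bigr)-L_{\text{d}}\sum_i\norm{x^{i+1}-x^i}^2$; dividing by $k$ and discarding the two nonpositive terms gives $f(\hat u^k)-f^*\le\frac{L_{\text{d}}\norm{x^0}^2}{k}+\delta$, hence a fortiori the stated $+3\delta$.

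The routine ingredients are convexity and weak duality; the only genuine subtlety is that inexactness ($\delta>0$) destroys the monotone contraction of $\norm{x^i-x^*}$, so the Lyapunov recursion only certifies a controlled growth of $\frac{3\delta}{L_{\text{d}}}$ per step, and it is precisely the accumulation of these errors that generates the $\sqrt{12L_{\text{d}}\delta/k}$ terms. The hard part will therefore be the bookkeeping needed to make the constants in the $\norm{x^k-x^0}$ bound and in the telescoping of the quadratic residual terms line up exactly with the claimed expressions, rather than any conceptual difficulty.
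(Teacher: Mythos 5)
Your proposal is correct and follows essentially the same route as the paper's Appendix A.1: the telescoping of the gradient steps to express the averaged residual as $\tfrac{2L_{\mathrm{d}}}{k}\norm{x^k-x^0}$, the Lyapunov recursion from the $(3\delta,2L_{\mathrm{d}})$-oracle to bound $\norm{x^k-x^*}$, weak duality through $x^*$ for the lower suboptimality bound, and the identity $f(u)=\mathcal{L}^{\text{ag}}_{\mu}(u,x)-\iprod{x,\nabla_x\mathcal{L}^{\text{ag}}_{\mu}(u,x)}-\tfrac{\mu}{2}\norm{\nabla_x\mathcal{L}^{\text{ag}}_{\mu}(u,x)}^2$ for the upper one. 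The only (harmless) deviation is that you telescope that identity directly instead of folding it into the Lyapunov recursion as the paper does, which in fact yields the slightly sharper constant $\delta$ in place of $3\delta$ in the upper suboptimality bound.
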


\begin{proof}
In order to facilitate an easy reading of the results,  we provide
the proof of  primal infeasibility and suboptimality bounds in
Appendix A.1.
\end{proof}

\noindent Note that using the above rate of convergence, one might
choose an appropriate value of the smoothing parameter $\mu$ such
that after a single outer iteration an $\epsilon-$optimal point is
obtained. Thus, convergence rates in terms of outer iterations are
not relevant in this case and it is natural to analyze the
computational complexity of the Algorithm \textbf{ICFG}, by taking
into account also the complexity of solving the inner subproblems.
 { Therefore, we need  to also count the number of
fast gradient steps, which includes projections onto  $U$ and
$\mathcal{K}$, matrix-vector products $Gu$ and $G^Tx$, or gradient
computations $\nabla f(u)$, performed in order to attain the
required inner accuracy, at a given outer iteration. Since,  in the
literature this is usually measured in terms of projections onto $U$
and $\mathcal{K}$ (see e.g. \cite{AybIye:13,LanLu:11,LanMon:15}), we
also use this measure of computational complexity.} We further
analyze the necessary number of inner projections that the inexact
gradient Lagrangian method has to perform at each outer iteration. A
well-known fact that we use further is that the function $u \mapsto
\text{dist}_{\mathcal{K}} \left(G u + g \right)^2$ has Lipschitz
continuous gradient with constant $\norm{G}^2$ \cite{LanLu:11}.
Using this observation, depending of the assumptions on the function
$f$, we have the following inner iteration complexities for solving
approximately the inner problem $\min_{u \in U}
\mathcal{L}^{\text{ag}}_{\mu} (u,x)$ for a given
$x$:\\

\noindent $(i)$ If the function $f$ is simple, then  Algorithm
\textbf{ICFG}($\frac{\mu}{2}\text{dist}_{\mathcal{K}}(G\cdot + g +
\frac{1}{\mu}x)^2,f,0,\mu\norm{G}^2$) returns a primal point
$u_{\mu}(x) \in U$ such that $\mathcal{L}^{\text{ag}}_{\mu}
(u_{\mu}(x),x) - d^{\text{ag}}_{\mu}(x)\le \delta$ after:
\begin{equation*}
N_{\delta}^{\text{in}}  = \left \lceil
\norm{G}D_U\sqrt{\frac{2\mu}{\delta}} \right\rceil
\end{equation*}
projections onto the primal simple feasible set $\mathcal{K} \times
U$.

\noindent $(ii)$ If the function $f$ is not simple, but its gradient
$\nabla f$ is Lipschitz continuous with constant $L_f>0$, then the
Algorithm \textbf{ICFG}($ \mathcal{L}^{\text{ag}}_{\mu} (\cdot, x),
0, 0,L_f + \mu\norm{G}^2$) returns a primal point $u_{\mu}(x) \in U$
such that $\mathcal{L}^{\text{ag}}_{\mu}(u_{\mu}(x),x) -
d^{\text{ag}}_{\mu}(x)\le \delta$ after:
\begin{equation}\label{inner_comp_Lips}
N_{\delta}^{\text{in}} =  \left \lceil D_U\sqrt{\frac{2(L_f + \mu
\norm{G}^2)}{\delta}} \right \rceil
 \end{equation}
projections onto the primal simple feasible set $\mathcal{K} \times U$.\\

\noindent Note that if we take $L_f = 0$ in the iteration complexity
\eqref{inner_comp_Lips}, we recover the convergence rate  for the
case when $f$  is simple function. Therefore, for a uniform
complexity analysis, we provide in the following result an upper
bound on the total number of projections (for an optimal smoothing
parameter $\mu$) performed by the Algorithm \textbf{ICFG}, which  is
dependent on $L_f$ in the following sense: with some abuse of
notation for $f$ simple function we make the convention that  $L_f =
0$, and thus  we obtain the computational  complexity for simple
functions; otherwise, if we consider $L_f>0$, then we recover the
overall  complexity for the case when $\nabla f$ is $L_f$-Lipschitz
continuous. Moreover, we assume for simplicity that $x^0=0$.

\begin{theorem}\label{th_complexity_aug}
Under Assumptions \ref{strong_conv} with $\sigma_f=0$,
\ref{simple_set} and \ref{assump_bound_multipliers}, let $\mu,
\epsilon,\delta >0$ and $ (x^k)_{k \ge 0}$ be generated by Algorithm
\textbf{ICFG}$(d^{\text{ag}}_{\mu},0,3\delta,2L_{\text{d}})$ with
$\theta_k = 1$ for all $k \ge 0$. Assume that at each outer
iteration $k$, Algorithm
\textbf{ICFG}($\frac{\mu}{2}\text{dist}_{\mathcal{K}}(G\cdot + g +
\frac{1}{\mu}x^k)^2,f,0,\mu\norm{G}^2$) (if $f$ is simple and with
some abuse of notation we make the convention that $L_f=0$) or
Algorithm \textbf{ICFG}($ \mathcal{L}^{\text{ag}}_{\mu} (\cdot,
x^k), 0, 0,L_f + \mu\norm{G}^2$) (if $\nabla f$ is $L_f>0$ Lipschitz
continuous) is called to solve the inner problem and obtain a primal
approximate solution $u^k = u_\mu(x^k)$ such that
$\mathcal{L}^{\text{ag}}_{\mu}(u^k,x^k) -
d^{\text{ag}}_{\mu}(x^k)\le \delta$. Then, by setting the optimal
smoothing parameter:
\begin{equation}\label{smpar_aug_Lips}
\mu = \max\left\{\frac{16R_{\text{d}}^2}{\epsilon},
\frac{L_f}{\norm{G}^2}\right\}   \qquad \text{and} \qquad \delta =
\frac{\epsilon}{3}
\end{equation}
the average primal point $\hat{u}^k$ defined by \eqref{average_seq}
is $\epsilon-$optimal after a total number of
\begin{equation*}
 k = \left \lceil \sqrt{\frac{24 L_fD_U ^2}{\epsilon}} + \frac{6 \norm{G} D_U R_{\text{d}}}{\epsilon} \right \rceil
\end{equation*}
projections onto the primal simple feasible set $\mathcal{K} \times
U$.
\end{theorem}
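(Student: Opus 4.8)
The plan is to combine the outer-level convergence estimates of Theorem \ref{aug_grad_outer} with the inner-iteration count \eqref{inner_comp_Lips}, and then to exploit the observation made just after Theorem \ref{aug_grad_outer} that, for a suitable smoothing parameter, a \emph{single} outer iteration already yields an $\epsilon$-optimal point. First I would use the assumption $x^0 = 0$ so that the suboptimality upper bound in Theorem \ref{aug_grad_outer}$(ii)$ collapses to $3\delta$; choosing $\delta = \epsilon/3$ as in \eqref{smpar_aug_Lips} then guarantees $f(\hat{u}^k) - f^* \le \epsilon$ regardless of the number of outer iterations. This isolates the two remaining requirements of Definition \ref{opt_point}, namely the feasibility residual and the suboptimality lower bound, both of which decay with the outer index $k$ through a $1/k$ term and a $\sqrt{\delta/k}$ term.

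Next I would show that one outer iteration ($k=1$, so that $\hat{u}^1 = u_\mu(x^0)$ is a single inner solve) already meets these two requirements under the choice $\mu = \max\{16R_{\text{d}}^2/\epsilon,\, L_f/\norm{G}^2\}$, equivalently $L_{\text{d}} = 1/\mu \le \epsilon/(16R_{\text{d}}^2)$. Substituting $\delta = \epsilon/3$ collapses the square-root term of Theorem \ref{aug_grad_outer} to $2\sqrt{L_{\text{d}}\epsilon}$; plugging in the bound on $L_{\text{d}}$ then bounds the lower-suboptimality gap by $\tfrac{\epsilon}{4} + \tfrac{\epsilon}{2} = \tfrac{3\epsilon}{4} \le \epsilon$, and bounds the feasibility residual by $\tfrac{3\epsilon}{4R_{\text{d}}}$. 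The purpose of the $\max$ defining $\mu$ is to ensure that when $L_f$ is large the induced dual Lipschitz constant $L_{\text{d}}$ is still small enough for the same single-iteration estimate to go through, so that both the $R_{\text{d}}$-dominated and the $L_f$-dominated regimes are covered by one parameter choice.

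Having certified that $\hat{u}^1$ is $\epsilon$-optimal, the total number of projections equals the inner count incurred by the single call to Algorithm \textbf{ICFG} on the inner subproblem, with the convention $L_f = 0$ recovering the simple-$f$ estimate. I would substitute $\mu$ and $\delta = \epsilon/3$ into $N_\delta^{\text{in}} = \lceil D_U\sqrt{2(L_f + \mu\norm{G}^2)/\delta}\,\rceil$ to get $D_U\sqrt{6(L_f + \mu\norm{G}^2)/\epsilon}$, and then separate the two contributions using $\max\{a,b\} \le a+b$ together with $\sqrt{a+b} \le \sqrt{a} + \sqrt{b}$. This peels off a term of order $\sqrt{L_f D_U^2/\epsilon}$ and a term of order $\norm{G} D_U R_{\text{d}}/\epsilon$, which after collecting constants and taking the ceiling produces the announced bound.

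The main obstacle is making the single-outer-iteration claim fully rigorous across both regimes. One must verify that the prescribed $\mu$ simultaneously drives the feasibility residual, the lower-suboptimality gap, and (through $\delta$) the upper-suboptimality gap below $\epsilon$ after only one inner solve; in particular the feasibility estimate $\tfrac{3\epsilon}{4R_{\text{d}}}$ forces $R_{\text{d}}$ to be bounded away from $0$, which I would handle by taking $R_{\text{d}}$ to be an upper bound on $\norm{x^*}$ that is itself bounded below. Beyond this, the delicate part is purely the constant bookkeeping: keeping exact track of the interplay between the $1/k$ and $\sqrt{\delta/k}$ terms of Theorem \ref{aug_grad_outer} and of the numerical factors lost in the final square-root splitting, so that the resulting inner count lands precisely on the stated expression.
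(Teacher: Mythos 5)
Your proposal follows essentially the same route as the paper: both combine the primal infeasibility/suboptimality bounds of Theorem \ref{aug_grad_outer} with the inner-iteration count \eqref{inner_comp_Lips}, observe that the prescribed $\mu$ collapses the outer complexity to a single iteration, and then read off the total cost as one inner solve with $\delta = \epsilon/3$. The only differences are constant bookkeeping (your $\max\{a,b\} \le a+b$ splitting yields slightly larger numerical factors inside the square root) and your explicit flagging of the implicit requirement that $R_{\text{d}}$ not be too small for the feasibility bound $\tfrac{3\epsilon}{4R_{\text{d}}}$ to fall below $\epsilon$ --- a point the paper's proof also passes over silently.
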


\begin{proof}
Using the inner accuracy from \eqref{smpar_aug_Lips}  into Theorem
\ref{aug_grad_outer}, then the outer iteration complexity of the
augmented Lagrangian  method is given by:
\begin{equation}\label{gradaug_outer_estimate}
N_{\epsilon}^{\text{out}} = \left \lceil \frac{16L_d
R_d^2}{\epsilon} \right \rceil = \left \lceil \frac{16
R_d^2}{\mu\epsilon} \right \rceil.
\end{equation}
Based on the general inner complexity \eqref{inner_comp_Lips}, we
are able to tackle both cases:   when $f$ is simple or $\nabla f$ is
$L_f>0$ Lipschitz continuous. Minimizing the upper bound on the
product $N_{\epsilon}^{\text{out}} N^{\text{in}}_{\delta}$ over
positive parameters $\mu$ we get that the value of $\mu$ given in
\eqref{smpar_aug_Lips} is optimal up to a constant w.r.t. the total
complexity. Combining  \eqref{smpar_aug_Lips}, i.e. $\mu =
\max\left\{\frac{16R_d^2}{\epsilon},
\frac{L_f}{\norm{G}^2}\right\}$,  with \eqref{inner_comp_Lips}, we
obtain the following bound on the overall  complexity:
\begin{align*}
 N_{\epsilon}^{\text{out}} N_{\delta}^{\text{in}}
 & \le  \left\lceil \frac{16 R_{\text{d}}^2}{\mu \epsilon} \right\rceil
 \left(D_U\sqrt{\frac{6(L_f + \mu \norm{G}^2)}{\epsilon}} + 1\right) \nonumber \\
 & \le  \left(\frac{16 R_{\text{d}}^2}{\mu \epsilon} + 1 \right)
 2 D_U\sqrt{\frac{6(L_f + \mu \norm{G}^2)}{\epsilon}}  \nonumber \\
 & \overset{\eqref{smpar_aug_Lips}}{ = } 2 D_U \sqrt{\frac{6L_f}{\epsilon} +
  \frac{6 \norm{G}^2 R_{\text{d}}^2}{\epsilon^2}} + 1
   \le  \sqrt{\frac{24 L_fD_U}{\epsilon}} + \frac{6D_U \norm{G} R_{\text{d}}}{\epsilon} + 1.
 \end{align*}
Note that if we take $L_f = 0$ we obtain an upper bound on the
overall complexity of Algorithm \textbf{ICFG} for the case when $f$
is a  simple function.
\end{proof}

\begin{remark}
It is interesting to observe that choosing the smoothing parameter
$\mu \ge \frac{16R_{\text{d}}^2}{\epsilon}$, the
estimate \eqref{gradaug_outer_estimate} leads to the fact that the
inexact gradient augmented Lagrangian method terminates after
solving only once the inner subproblem. In other words, if an upper
bound on $R_{\text{d}}$ is known, then starting from an arbitrary
dual initial point $x^0 \in \rset^m$, it is sufficient to compute
$u_{\mu}(x^0) \in U$ satisfying
$\mathcal{L}^{\text{ag}}_{\mu}(u_{\mu}(x^0),x^0) - d(x^0) \le
\epsilon$ to obtain an $\epsilon-$optimal solution of
\eqref{problem}. In particular, if $x^0 = 0$, then the inner
subproblem has the form: $\min\limits_{u \in U} \; f(u) +
\frac{\mu}{2}\text{dist}_{\mathcal{K}}(Gu+g)^2$, which can be seen
as a  differentiable penalty problem. We conclude that, in the case
of known $R_d$, the gradient augmented  Lagrangian method is similar
to the quadratic penalty method. \qed
\end{remark}


\subsubsection{Inexact fast gradient augmented Lagrangian  method}
We further incorporate Nesterov accelerated step into   the
classical augmented Lagrangian method, i.e.  we apply the inexact
fast gradient method on the augmented dual problem.  We analyze the
overall complexity of the inexact fast gradient augmented Lagrangian
method, under the Assumptions \ref{strong_conv} with $\sigma_f = 0$,
\ref{simple_set} and \ref{assump_bound_multipliers}. Using the
inexact oracle relation \eqref{inexact_descentlemma} and Theorem
\ref{ifg_rate_conv} we immediately obtain the following iteration
complexity (in terms of outer iterations) of the fast gradient
method:
\begin{corollary}\label{corr_icfg_fastgrad}
Under  Assumptions \ref{strong_conv} with $\sigma_f=0$,
\ref{simple_set} and \ref{assump_bound_multipliers}, let
$\mu,\delta>0,$ and $(x^k, y^k)_{k \ge 0}$ be the sequences
generated  by Algorithm
\textbf{ICFG}($d^{\text{ag}}_{\mu},0,3\delta, 2L_{\text{d}}$) with
$\theta_{k+1} = \frac{1 + \sqrt{1 + 4 \theta_k^2 }}{2}$ for all $k
\geq 1$.  Then, we have the following estimate on dual
suboptimality:
\begin{equation*}
f^* - d_{\mu}^{\text{ag}}(x^k) \le \frac{4 L_{\text{d}}
R_{\text{d}}^2}{(k+1)^2} + 3 k \delta.
\end{equation*}
\end{corollary}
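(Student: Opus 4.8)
The plan is to apply Theorem \ref{ifg_rate_conv}(ii) directly to the augmented dual problem, using the inexact oracle constructed in Theorem \ref{th_inexact_oracle}. Since Algorithm \textbf{ICFG} minimizes a composite objective while we wish to maximize $d^{\text{ag}}_{\mu}$, the first step is to recast the maximization $f^* = \max_{x \in \rset^m} d^{\text{ag}}_{\mu}(x)$ as the minimization of $\phi := -d^{\text{ag}}_{\mu}$ over $Q = \rset^m$ with $\psi \equiv 0$, so that in the notation of \eqref{general_comp} we have $F = \phi$ and $F^* = -f^*$.

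Next I would identify the oracle parameters. By Theorem \ref{th_inexact_oracle}, relation \eqref{inexact_descentlemma} is precisely the statement that $\phi = -d^{\text{ag}}_{\mu}$ is equipped with a first order inexact $(3\delta, 2L_{\text{d}})$-oracle in the sense of \eqref{deltaL}, where the approximate value and gradient at $y$ are $\phi_{\delta,L}(y) = -\mathcal{L}^{\text{ag}}_{\mu}(u_{\mu}(y),y)$ and $\nabla \phi_{\delta,L}(y) = -\nabla_x \mathcal{L}^{\text{ag}}_{\mu}(u_{\mu}(y),y) = -(Gu_{\mu}(y)+g)$. This is exactly the information consumed by the accelerated variant of \textbf{ICFG} (with $\theta_{k+1} = \frac{1 + \sqrt{1 + 4\theta_k^2}}{2}$) that produces the sequence $(x^k, y^k)_{k \ge 0}$, so the hypotheses of Theorem \ref{ifg_rate_conv}(ii) are met with $L = 2L_{\text{d}}$ and oracle inaccuracy $3\delta$.

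Finally I would invoke Theorem \ref{ifg_rate_conv}(ii), choosing $z^* = x^* \in X^*$ to be the dual optimizer nearest to $x^0$, so that $R_{\text{d}} = \norm{x^0 - x^*}$. The theorem then yields $F(x^k) - F^* \le \frac{2(2L_{\text{d}})\norm{x^0 - x^*}^2}{(k+1)^2} + 3k\delta$; substituting $F = -d^{\text{ag}}_{\mu}$, $F^* = -f^*$ and rearranging gives exactly $f^* - d^{\text{ag}}_{\mu}(x^k) \le \frac{4L_{\text{d}}R_{\text{d}}^2}{(k+1)^2} + 3k\delta$. There is essentially no genuine obstacle here: the only point demanding care is the sign convention (the dual is maximized, whereas \textbf{ICFG} minimizes) together with the correct propagation of the oracle constants $(3\delta, 2L_{\text{d}})$ into the factor $4L_{\text{d}}$ and the additive term $3k\delta$; every remaining step is an immediate substitution.
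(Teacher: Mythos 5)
Your proposal is correct and follows exactly the route the paper intends: the corollary is stated as an immediate consequence of the $(3\delta,2L_{\text{d}})$-oracle from Theorem \ref{th_inexact_oracle} plugged into Theorem \ref{ifg_rate_conv}(ii), which after the sign flip $\phi=-d^{\text{ag}}_{\mu}$ gives $\frac{2(2L_{\text{d}})R_{\text{d}}^2}{(k+1)^2}+3k\delta$. Your handling of the maximization-to-minimization conversion and of the oracle constants is precisely the (unwritten) argument the paper relies on.
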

\noindent Note that the above convergence rate is linked only to the
number of the outer iterations and omits the complexity of solving
the inner subproblem at step $1.$ Before estimating the total
complexity of the process containing the inner and outer levels, we
provide convergence rates for the primal infeasibility and
suboptimality.

\begin{theorem}\label{aug_fastgrad_outer}
Under Assumptions \ref{strong_conv} with $\sigma_f=0$,
\ref{simple_set} and \ref{assump_bound_multipliers}, let $\mu,\delta
> 0$ and $(x^k,y^k)_{k \ge 0}$ be the sequences generated by
Algorithm \textbf{ICFG}$(d^{\text{ag}}_{\mu},0,3\delta,
2L_{\text{d}})$ with $\theta_{k+1} = \frac{1 + \sqrt{1 + 4
\theta_k^2 }}{2}$ for all $k \geq 1$.  Let $u^i = u_{\mu}(x^i)$ be
such that $\mathcal{L}_{\mu}^{\text{ag}}(u^i,x^i) -
d_{\mu}^{\text{ag}}(x^i) \le \delta$ for $0 \leq i \leq k$. Then,
the average primal sequence
$({\hat u}^k)_{k \geq 1}$ defined by \eqref{average_seq} satisfies:\\
$(i)$ The primal infeasibility is bounded sublinearly as follows:
\begin{equation*}
\text{dist}_{\mathcal{K}}\left( G\hat{u}^k +g \right) \le
\frac{8L_{\text{d}} R_{\text{d}}}{k^2} +
8\sqrt{\frac{3L_{\text{d}}\delta}{k}}.
\end{equation*}
$(ii)$ The primal suboptimality gap is bounded by:
\begin{align*} 
- \frac{8L_{\text{d}} R_{\text{d}}^2}{k^2} -
8R_{\text{d}}\sqrt{\frac{3L_{\text{d}}\delta}{k}} \le f(\hat{u}^{k})
- f^* \le  \frac{8 L_\text{d} \norm{x^0}^2}{k^2} + 3 k \delta.
\end{align*}
\end{theorem}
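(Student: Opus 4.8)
The plan is to convert the dual guarantee of Corollary~\ref{corr_icfg_fastgrad} into the two primal certificates, following the same scheme as the proof of Theorem~\ref{aug_grad_outer} in Appendix~A.1 but now carrying the Nesterov weights $\theta_i$ instead of $\theta_i\equiv1$. Throughout I write $g^i = \nabla_x \mathcal{L}^{\text{ag}}_{\mu}(u^i,x^i) = Gu^i+g-s^i$, where $s^i = \left[Gu^i+g+\tfrac1\mu x^i\right]_{\mathcal{K}}\in\mathcal{K}$ is the projection produced by the oracle of Theorem~\ref{th_inexact_oracle}, and $L = 2L_{\text{d}}$ is the oracle Lipschitz constant. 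The decisive structural fact is the weight identity $\theta_i^2 = \theta_i + \theta_{i-1}^2$, which gives $S^{\theta}_k = \theta_{k-1}^2 \ge k^2/4$ and the telescoping relation $\sum_i\theta_i g^i = L\,(v^k - x^0)$ for the anchor sequence $v^k = \theta_k x^k - (\theta_k-1)x^{k-1}$ produced by the momentum recursion of \textbf{ICFG}; this $1/k^2$ scaling of $S^{\theta}_k$ is precisely what upgrades the $1/k$ rates of Theorem~\ref{aug_grad_outer} to the $1/k^2$ rates claimed here.

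For part $(i)$ I would first use that $\mathcal{K}$ is a convex cone and $\theta_i\ge0$ to deduce $\hat s^k := \frac{1}{S^{\theta}_k}\sum_i\theta_i s^i\in\mathcal{K}$, so that
\begin{equation*}
\text{dist}_{\mathcal{K}}(G\hat u^k+g) \le \norm{G\hat u^k+g-\hat s^k} = \frac{1}{S^{\theta}_k}\norm{\sum_i\theta_i g^i} = \frac{L}{S^{\theta}_k}\norm{v^k-x^0}.
\end{equation*}
It then remains to bound $\norm{v^k-x^0}\le\norm{v^k-x^*}+\norm{x^*-x^0}$. The second term is at most $R_{\text{d}}$ by Assumption~\ref{assump_bound_multipliers}; the first is controlled by the inexact accelerated estimate-sequence inequality, which yields $\frac{L}{2}\norm{v^k-x^*}^2 \le \frac{L}{2}\norm{x^0-x^*}^2 + \sum_i\theta_i^2\,(3\delta)$. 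Since $\sum_{i\le k}\theta_i^2 = O(k^3)$, this gives $\norm{v^k-x^*}\le R_{\text{d}}+O(\sqrt{k^3\delta/L})$, and dividing by $S^{\theta}_k\ge k^2/4$ produces exactly the announced bound $\frac{8L_{\text{d}}R_{\text{d}}}{k^2}+8\sqrt{3L_{\text{d}}\delta/k}$.

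For the upper estimate in $(ii)$ I would aggregate, with weights $\theta_i$, the concave upper model given by the left inequality of \eqref{inexact_descentlemma} together with the prox three-point inequality furnished by the minimization in Step~$2$ of \textbf{ICFG}. Using the identity $\mathcal{L}^{\text{ag}}_{\mu}(u^i,x^i) = f(u^i)+\frac{\mu}{2}\norm{g^i}^2+\iprod{x^i,g^i}$ together with convexity of $f$ (so that $f(\hat u^k)\le\frac{1}{S^{\theta}_k}\sum_i\theta_i f(u^i)$), the cross terms $\iprod{x^i,g^i}$ and the squared norms telescope, leaving only the boundary term $\frac{L}{2S^{\theta}_k}\norm{x-x^0}^2$ and an accumulated error $O(k\delta)$; evaluating the resulting inequality at the reference multiplier $x=0$ collapses the left-hand side to $f(\hat u^k)-f^*$ and gives $f(\hat u^k)-f^*\le\frac{8L_{\text{d}}\norm{x^0}^2}{k^2}+3k\delta$. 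The lower estimate in $(ii)$ is then pure weak duality: taking $u=\hat u^k\in U$ and $s=\hat s^k\in\mathcal{K}$ in $f^* = d(x^*)\le f(\hat u^k)+\iprod{x^*,G\hat u^k+g-\hat s^k}$ yields $f(\hat u^k)-f^*\ge -\norm{x^*}\,\norm{G\hat u^k+g-\hat s^k}\ge -R_{\text{d}}\bigl(\frac{8L_{\text{d}}R_{\text{d}}}{k^2}+8\sqrt{3L_{\text{d}}\delta/k}\bigr)$, using $\norm{x^*}\le R_{\text{d}}$ and the bound on $\norm{G\hat u^k+g-\hat s^k}$ already obtained in $(i)$.

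The main obstacle is the estimate-sequence step invoked in the second paragraph: one must reprove the accelerated inequality of Theorem~\ref{ifg_rate_conv}(ii) in the sharper form that simultaneously controls the anchor distance $\norm{v^k-x^*}$ and tracks how the per-iteration inexactness $3\delta$ is amplified by $\theta_i^2$ and accumulates to $\sum_i\theta_i^2(3\delta)=O(k^3\delta)$ --- this is what converts into the $\sqrt{\delta/k}$ infeasibility error and the $k\delta$ suboptimality error, and it is noticeably more delicate than the $\theta_i\equiv1$ telescoping used in Appendix~A.1. By contrast, the cone-averaging reduction and the weak-duality lower bound are routine.
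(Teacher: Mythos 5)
Your proposal follows essentially the same route as the paper's Appendix A.2: your anchor sequence $v^k=\theta_k x^k-(\theta_k-1)x^{k-1}$ is the paper's $l^k$, the "inexact accelerated estimate-sequence inequality" with error accumulation $3\sum_i\theta_i^2\delta$ is exactly the Tseng-type relation \eqref{corr2_relation_ag} that the paper cites and then uses both to bound $\norm{l^k-x^*}$ for feasibility and (after inserting the weighted linearization lower bound and the identity \eqref{identity_lag}, evaluated at $x=0$) to get the upper suboptimality estimate, and the lower bound is the same weak-duality argument. The only cosmetic discrepancy is that you evaluate the oracle gradients at $x^i$ while the telescoping identity requires them at the extrapolated points $y^i$ — a notational slip the paper's own statement shares — so this does not constitute a substantive difference.
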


\begin{proof}
We provide the proof of the primal infeasibility and suboptimality
gap bounds in the Appendix A.2.
\end{proof}

\noindent The necessary number of inner iterations that the inexact
fast  gradient augmented  Lagrangian   method has to perform at each
outer iteration is given by \eqref{inner_comp_Lips}. As in the
previous section, in the following result we provide the total
number of projections performed by Algorithm \textbf{ICFG}, for
simple objective functions (i.e. we make the convention that
$L_f=0$) and objective functions with Lipschitz continuous gradient
(i.e. we have  $L_f>0$). Moreover, we assume for simplicity that
$x^0=0$, $R_{\text{d}} > 1$ and $\epsilon$ sufficiently small.

\begin{theorem}
Under Assumptions \ref{strong_conv} with $\sigma_f=0$,
\ref{simple_set} and \ref{assump_bound_multipliers}, let
$\mu,\epsilon,\delta>0$ and $(x^k,y^k)_{k \ge 0}$ be  generated by
Algorithm
\textbf{ICFG}$(d^{\text{ag}}_{\mu},0,3\delta,2L_{\text{d}})$ with
$\theta_{k+1} = \frac{1 + \sqrt{1 + 4 \theta_k^2 }}{2}$ for $k \geq
1$. Assume that at each outer iteration $k$,  Algorithm
\textbf{ICFG}($\frac{\mu}{2}\text{dist}_{\mathcal{K}}(G\cdot + g +
\frac{1}{\mu}x^k)^2,f,0,\mu\norm{G}^2$) (if $f$ is simple and we
make the convention that $L_f=0$) or Algorithm \textbf{ICFG}($
\mathcal{L}^{\text{ag}}_{\mu} (\cdot, x^k), 0, 0,L_f +
\mu\norm{G}^2$)(if $\nabla f$ is $L_f>0$ Lipschitz continuous) is
called to obtain an approximate solution of the inner problem $u^k =
u_\mu(x^k)$ such that $\mathcal{L}^{\text{ag}}_{\mu}(u^k,x^k) -
d^{\text{ag}}_{\mu}(x^k)\le \delta$. Then, by setting the optimal
smoothing parameter:
\begin{equation}\label{parameters_fastgrad_ag_2}
\mu = \frac{16 R_{\text{d}}^2}{\epsilon}  \quad \text{and} \quad
\delta = \frac{\epsilon}{24}
\end{equation}
the average primal point $\hat{u}^k$ defined by \eqref{average_seq}
is $\epsilon-$optimal after a total number of
 \begin{equation*}
k = \left \lceil \frac{14 L_f^{1/2}D_U}{\epsilon^{1/2}} + \frac{56
R_d \norm{G} D_U}{\epsilon} \right \rceil \quad
 \end{equation*}
projections onto the  primal simple feasible set $\mathcal{K} \times
U$.
\end{theorem}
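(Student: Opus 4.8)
The plan is to mirror the proof of Theorem \ref{th_complexity_aug}, but now using the accelerated outer bounds of Theorem \ref{aug_fastgrad_outer} in place of the non-accelerated ones, and then to multiply the resulting outer iteration count by the inner projection count \eqref{inner_comp_Lips}. First I would substitute the prescribed parameters \eqref{parameters_fastgrad_ag_2}, namely $\mu = 16 R_{\text{d}}^2/\epsilon$ (so that $L_{\text{d}} = 1/\mu = \epsilon/(16 R_{\text{d}}^2)$) and $\delta = \epsilon/24$, into the three bounds of Theorem \ref{aug_fastgrad_outer} evaluated at the average primal point $\hat{u}^k$ from \eqref{average_seq}. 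The choice $x^0 = 0$ kills the $8 L_{\text{d}}\norm{x^0}^2/k^2$ term, so the suboptimality upper bound collapses to $3 k \delta = k\epsilon/8$.

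The crux, which I expect to be the main obstacle, is determining the required number of \emph{outer} iterations. Unlike the quadratic penalty reading of the gradient method (where a single inner solve sufficed), here the term $3 k \delta = k\epsilon/8$ in the suboptimality upper bound \emph{grows} with the outer counter $k$, while the accelerated terms $8 L_{\text{d}} R_{\text{d}}^2/k^2 = \epsilon/(2k^2)$ decay fast and the inexactness terms $8 R_{\text{d}}\sqrt{3 L_{\text{d}}\delta/k}$ scale like $\epsilon/\sqrt{k}$ and are controlled using $R_{\text{d}} > 1$. This competition forces the outer count to be a small constant rather than growing with $1/\epsilon$. I would verify directly that $k = 2$ outer iterations already render all three quantities at most $\epsilon$ in magnitude: the infeasibility bound becomes $\tfrac{\epsilon}{8R_{\text{d}}} + \tfrac{\epsilon}{2R_{\text{d}}} \le \tfrac{5\epsilon}{8}$, the suboptimality lower bound is $-\tfrac{\epsilon}{8} - \tfrac{\epsilon}{2} \ge -\epsilon$, and the suboptimality upper bound is $3\cdot 2\cdot\delta = \epsilon/4 \le \epsilon$. (A quick check at $k=1$ shows the infeasibility term can exceed $\epsilon$ when $R_{\text{d}}$ is close to $1$, which is precisely why two outer iterations are needed.) Hence $N_\epsilon^{\text{out}} = 2$.

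It remains to count the inner projections. At each of the two outer iterations the inner subproblem $\min_{u\in U}\mathcal{L}^{\text{ag}}_{\mu}(\cdot,x^k)$ is solved to accuracy $\delta$ by Algorithm \textbf{ICFG}, which by \eqref{inner_comp_Lips} costs $N_\delta^{\text{in}} = \lceil D_U\sqrt{2(L_f + \mu\norm{G}^2)/\delta}\,\rceil$ projections. Substituting the parameters gives the radicand $\tfrac{48 L_f}{\epsilon} + \tfrac{768 R_{\text{d}}^2\norm{G}^2}{\epsilon^2}$, and applying $\sqrt{a+b}\le\sqrt{a}+\sqrt{b}$ yields $N_\delta^{\text{in}} \le D_U\bigl(\tfrac{4\sqrt{3}\,L_f^{1/2}}{\epsilon^{1/2}} + \tfrac{16\sqrt{3}\,R_{\text{d}}\norm{G}}{\epsilon}\bigr) + 1$. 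Multiplying by $N_\epsilon^{\text{out}} = 2$ and collecting constants via $8\sqrt{3} < 14$ and $32\sqrt{3} < 56$ produces the dominant terms $\tfrac{14 L_f^{1/2}D_U}{\epsilon^{1/2}} + \tfrac{56 R_{\text{d}}\norm{G}D_U}{\epsilon}$; the two residual additive units (and the ceiling) are absorbed into the leading $\mathcal{O}(1/\epsilon)$ term for $\epsilon$ sufficiently small and $R_{\text{d}} > 1$, which accounts for precisely those standing hypotheses and yields the claimed total projection count. Finally, setting $L_f = 0$ recovers the bound for a simple objective $f$.
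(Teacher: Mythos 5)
Your proposal is correct and follows the same route as the paper's own proof: outer bounds from Theorem \ref{aug_fastgrad_outer}, inner projection count from \eqref{inner_comp_Lips}, multiply, and the parameter values \eqref{parameters_fastgrad_ag_2} together with the final constants $14$ and $56$ (i.e.\ $8\sqrt{3}$ and $32\sqrt{3}$ rounded up) come out identically. The one genuine divergence is the outer iteration count, and there you are actually more careful than the paper. The paper fixes $N_{\epsilon}^{\text{out}} = \lceil 4R_{\text{d}}(1/(\mu\epsilon))^{1/2}\rceil$, which equals $1$ for $\mu = 16R_{\text{d}}^2/\epsilon$, by forcing each term of the bounds in Theorem \ref{aug_fastgrad_outer} below $\epsilon$ \emph{individually}; but at $k=1$ the two terms of the suboptimality lower bound sum to $\epsilon/2 + \epsilon/\sqrt{2} \approx 1.21\epsilon$ (for every $R_{\text{d}}$, not only the infeasibility bound for $R_{\text{d}}$ near $1$ that you flag), so the stated estimates do not quite certify $\epsilon$-optimality after a single outer iteration. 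Your direct verification that $k=2$ suffices repairs this at the price of a factor $2$ on the inner count, and you land on the same constants because the paper spends its own factor $2$ instead on absorbing the ceiling in $N^{\text{in}}_{\delta}$ (writing $\sqrt{8D_U^2(L_f+\mu\norm{G}^2)/\delta}$ in place of $\sqrt{2(\cdot)/\delta}+1$). The remaining bookkeeping --- absorbing the two residual additive units using $\epsilon$ small and $R_{\text{d}}>1$ --- is legitimate and consistent with the theorem's standing hypotheses, so your argument establishes the stated complexity.
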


\begin{proof}
First, we observe that if $R_{\text{d}} > 1$, then from  Theorem
\ref{aug_fastgrad_outer}  the number of the outer iterations
$N_{\epsilon}^{\text{out}}$ satisfies:
\begin{equation*}
N_{\epsilon}^{\text{out}} = \left\lceil 4 R_{\text{d}}\left(
\frac{L_{\text{d}}}{\epsilon}\right)^{1/2} \right \rceil =
\left\lceil 4 R_{\text{d}}\left( \frac{1}{\mu\epsilon}\right)^{1/2}
\right\rceil.
\end{equation*}
for any $\mu > 0$, and by forcing both terms in Theorem
\ref{aug_fastgrad_outer} $(ii)$ to have lower magnitudes than
$\epsilon$, then the inner accuracy $\delta$ satisfies:
\begin{align*}
\delta & \overset{\text{Th.}\; \ref{aug_fastgrad_outer} (ii) }{\le}
\min\left\{ \frac{\epsilon}{N_{\epsilon}^{\text{out}}},
\frac{\epsilon^2 N_{\epsilon}^{\text{out}}}{384 R_{\text{d}}^2
L_d}\right\}=\frac{\epsilon^2 N_{\epsilon}^{\text{out}}}{384
R_{\text{d}}^2 L_d}.
\end{align*}
If one chooses $\delta = \frac{\epsilon^2
N_{\epsilon}^{\text{out}}}{384 R_{\text{d}}^2 L_d}$, then this
inequality implies that:
$$ N^{\text{in}}_{\delta} \le 2D_U\sqrt{\frac{2(L_f + \mu\norm{G}^2)}{\delta}} \le
28 D_U \sqrt{\frac{(L_f+ \mu\norm{G}^2)
R_{\text{d}}}{\mu^{1/2}\epsilon^{3/2}}}.$$ For simplicity let $\mu$
satisfy $\mu \ge \frac{L_f}{\norm{G}^2}$. Then, we obtain in this
case the following computational complexity:
\begin{align*}
N^{\text{in}}_{\delta}N_{\epsilon}^{\text{out}} & \le \frac{ 42
\norm{G} D_U R_{\text{d}}^{1/2} \mu^{1/4}}{\epsilon^{3/4}}
 \left[ \frac{4R_{\text{d}}}{(\mu\epsilon)^{1/2}} + 1 \right] \\
 & = \frac{168 \norm{G} D_U R_{\text{d}}^{3/2}}{\mu^{1/4}\epsilon^{5/4}}
 + \frac{ 42\norm{G} D_U R_{\text{d}}^{1/2} \mu^{1/4}}{\epsilon^{3/4}}.
\end{align*}
Minimizing over the set $\{ \mu \in \rset^{} \; | \; \mu \ge
\frac{L_f}{\norm{G}^2} \}$, we obtain that the  best complexity is
attained for $\mu = \max\left\{\frac{L_f}{\norm{G}^2}, \frac{16
R_{\text{d}}^2}{\epsilon} \right\}$. For a sufficiently small
$\epsilon$, the parameter $\mu$ becomes  $\mu = \frac{16
R_{\text{d}}^2}{\epsilon}$, which implies $N_{\epsilon}^{\text{out}}
= 1$ and further leads to: $ \delta = \frac{\epsilon}{24}$. Since
$N_{\epsilon}^{\text{out}} = 1$, under the  above choice  the total
number of projections onto $\mathcal{K} \times U$ required for
attaining an $\epsilon-$optimal point is given by:
\begin{align*}
N_{\epsilon}^{\text{out}} N^{\text{in}}_{\delta} =
N^{\text{in}}_{\delta} & \le \sqrt{\frac{8 D_U^2 (L_f + \mu
\norm{G}^2)}{\delta}} \le \frac{14 L_f^{1/2} D_U}{\epsilon^{1/2}} +
\frac{56 \norm{G} D_U R_{\text{d}}}{\epsilon},
\end{align*}
which proves our result. Note that if we make the convention that
$L_f = 0$, then we get the overall complexity for the case when $f$
is convex and simple function.
\end{proof}

\noindent It can be observed that, for an optimal choice  of the
smoothing parameter $\mu$, the inexact fast gradient augmented
Lagrangian method has the same computational complexity as the
classical inexact gradient augmented Lagrangian method, i.e.
$\mathcal{O}\left( \frac{1}{\epsilon}\right)$ total  projections
onto simple set $\mathcal{K} \times U$. However, we will show the
superiority of the fast variant in Section \ref{sec_comparison},
when we analyze the complexity of first order augmented Lagrangian
methods for attaining the optimality criteria introduced in
\cite{LanMon:15}.


\subsection{Adaptive inexact augmented Lagrangian method}
We have previously seen that, in the optimal case, both classical
and fast augmented Lagrangian methods are dependent on the constant
$\norm{x^*}$ via $R_d$, which in general is unknown a priori.
Therefore, in this section we introduce implementable  variants of
previous first order augmented Lagrangian methods, which approximate
$\norm{x^*}$ at each iteration, but maintain the same optimal
computational complexities with those given in the previous theorems
(up to a logarithmic factor). First, we observe that in the optimal
case (when $\norm{x^*}$ is known), both classical and fast augmented
Lagrangian methods perform a single outer iteration in order to
attain an $\epsilon-$optimal point. Therefore, we can intuitively
apply a search procedure which finds an upper bound on $\norm{x^*}$
in logarithmic number of steps, by performing a single outer
iteration and restarting the augmented Lagrangian method. It is
important to observe that this restarting strategy leads to an
identical scheme for both classical and fast augmented Lagrangian
methods. Throughout this section, we assume that the gradient
$\nabla f$ is Lipschitz continuous with constant $L_f>0$ (when $f$
is simple, with a similar reasoning as given below, we can obtain
the same  computational complexity results).

\vspace{3pt}

\begin{center}
\framebox{
\parbox{10cm}{
\noindent \textbf{ Algorithm A-IAL ($\mu_0, \epsilon$) }

\noindent Initialize $x^0 \in \rset^n$. For $k\geq 0$ do:
\begin{enumerate}
\item Compute $ u^k \in U$ such that $\mathcal{L}_{\mu}^{\text{ag}}(u^k,x^k) - d_{\mu}^{\text{ag}}(x^k) \le \frac{\epsilon}{3}$
\item Update: ${x}^{k+1}= x^k + \mu_k \nabla_x \mathcal{L}_{\mu}^{\text{ag}}(u^k,x^k)$
\item If $\text{dist}_{\mathcal{K}}(Gu^k + g) \le \epsilon$, then \textbf{STOP}
and \textbf{return} $u^k$, otherwise, $k = k+1, \; \mu_{k+1} = 2
\mu_k$ and go to step 1.
\end{enumerate}
}}
\end{center}

\vspace{3pt}

\noindent This adaptive scheme is equivalent with the classical
augmented Lagrangian method but with increasing smoothing parameter.
Further, we present the computational complexity of this algorithm
in the last primal point $u^k$ and compare it with the previous
results.

\begin{theorem}\label{th_a-ial}
Under Assumptions \ref{strong_conv} with $\sigma_f=0$,
\ref{simple_set} and \ref{assump_bound_multipliers}, let $\epsilon,
\mu_0>0$ and $(x^k)_{k \ge 0}$ be the sequence generated by
Algorithm \textbf{A-IAL}$(\mu_0,\epsilon)$. Assume that at each
outer iteration $k \ge 0$, the Algorithm
\textbf{ICFG}($\mathcal{L}^{\text{ag}}_{\mu} (\cdot, x^k), 0, 0, L_f
+ \mu\norm{G}^2$) is called to obtain $u^k$ such that
$\mathcal{L}^{\text{ag}}_{\mu}(u^k,x^k) -
d^{\text{ag}}_{\mu}(x^k)\le \frac{\epsilon}{3}$. Then, after a total
number of:
\begin{equation*}
\left \lceil \log_2 \left( \max\left\{
\frac{16R_d^2}{\mu_0\epsilon},\frac{L_f}{\mu_0\norm{G}^2} \right\}
\right) \right\rceil \left[\left( \frac{6 L_f
D_U^2}{\epsilon}\right)^{1/2} + 1\right] + \frac{16 \norm{G} R_d
D_U}{\epsilon}+ \frac{ 4 L_f^{1/2}D_U}{\epsilon^{1/2}}
\end{equation*}
projections onto the simple set $\mathcal{K} \times U$, the last
primal point $u^k$ satisfies
\begin{equation}\label{our_criteria}
 -\epsilon \norm{x^*} \le f(u^k) - f^* \le \epsilon, \qquad \text{dist}_{\mathcal{K}}(Gu^k+g) \le \epsilon.
\end{equation}
\end{theorem}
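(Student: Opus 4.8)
The plan is to regard Algorithm \textbf{A-IAL} as the inexact gradient augmented Lagrangian method of Theorem~\ref{aug_grad_outer} (the case $\theta_k=1$) run with a geometrically increasing smoothing parameter $\mu_k = 2^k\mu_0$, performing a single outer step for each value of $\mu_k$. The guiding observation is the one recorded in the Remark after Theorem~\ref{th_complexity_aug}: as soon as $\mu \ge 16R_{\text{d}}^2/\epsilon$ the estimate \eqref{gradaug_outer_estimate} forces $N_\epsilon^{\text{out}}=1$, so a single inexact gradient step already produces an $\epsilon$-optimal point. Accordingly, I would first fix the termination threshold $\bar\mu = \max\{16R_{\text{d}}^2/\epsilon,\; L_f/\norm{G}^2\}$ and show that the loop must exit at the first index $K$ with $\mu_K \ge \bar\mu$; since $\mu_k$ doubles, this gives $K \le \lceil \log_2(\bar\mu/\mu_0)\rceil$, which is exactly the logarithmic factor in the statement.

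For correctness at termination I would establish the three inequalities in \eqref{our_criteria} separately. The feasibility bound $\text{dist}_{\mathcal{K}}(Gu^k+g)\le\epsilon$ is enforced verbatim by the stopping test in Step~3, so the real content is showing the loop actually reaches it, which I obtain by applying Theorem~\ref{aug_grad_outer}$(i)$ with $k=1$ at the terminating index: with $L_{\text{d}}=1/\mu_K$, $\delta=\epsilon/3$ and $\mu_K\ge16R_{\text{d}}^2/\epsilon$ the right-hand side collapses below $\epsilon$. The lower suboptimality bound is then immediate from the standard inequality $f(u)-f^* \ge -\norm{x^*}\,\text{dist}_{\mathcal{K}}(Gu+g)$ (obtained by feeding the feasible point $[Gu+g]_{\mathcal{K}}$ into $d(x^*)=f^*$), which at termination yields $f(u^k)-f^* \ge -\epsilon\norm{x^*}$. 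For the upper bound I would combine the inner accuracy $\mathcal{L}^{\text{ag}}_{\mu_k}(u^k,x^k)-d^{\text{ag}}_{\mu_k}(x^k)\le\epsilon/3$ with weak duality $d^{\text{ag}}_{\mu_k}(x^k)\le f^*$ and the definition of $\mathcal{L}^{\text{ag}}_{\mu_k}$ to get $f(u^k)-f^* \le \tfrac{\epsilon}{3} + \tfrac{1}{2\mu_k}\norm{x^k}^2$, and then absorb the last term using $\mu_k\ge16R_{\text{d}}^2/\epsilon$ together with a bound on $\norm{x^k}$.

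For the complexity count I would sum the inner cost \eqref{inner_comp_Lips} with $\delta=\epsilon/3$, namely $N^{\text{in}}_\delta(\mu_k)=\lceil D_U\sqrt{6(L_f+\mu_k\norm{G}^2)/\epsilon}\rceil$, over the at most $K$ iterations. Splitting via $\sqrt{a+b}\le\sqrt a+\sqrt b$ separates an $L_f$-term, whose sum over the $K$ iterations contributes $K[(6L_fD_U^2/\epsilon)^{1/2}+1]$, from a $\mu_k$-term $D_U\norm{G}\sqrt{6\mu_k/\epsilon}$; because the $\mu_k=2^k\mu_0$ grow geometrically, the latter is a geometric series of ratio $\sqrt2$ dominated by its final entry $\sqrt{\mu_K}\approx 4R_{\text{d}}/\sqrt\epsilon$, and tracking the constants delivers the remaining terms $\tfrac{16\norm{G}R_{\text{d}}D_U}{\epsilon}+\tfrac{4L_f^{1/2}D_U}{\epsilon^{1/2}}$.

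The main obstacle, and the step I would spend the most care on, is the control of the dual iterate $x^k$ needed in the upper-bound argument (and, if one invokes Theorem~\ref{aug_grad_outer} with the starting point $x^K$ rather than a reset point, in the feasibility argument as well): since \textbf{A-IAL} carries $x^k$ forward while changing $\mu_k$, I must certify that $\norm{x^k}$ and $\norm{x^k-x^*}$ stay $O(R_{\text{d}})$ so that $\tfrac{1}{2\mu_k}\norm{x^k}^2\le\tfrac{2\epsilon}{3}$ at termination. This I would obtain from the non-expansiveness towards $x^*$ of the exact dual ascent step $x\mapsto x+\mu_k\nabla d^{\text{ag}}_{\mu_k}(x)$ --- valid because $X^*=\arg\max_x d^{\text{ag}}_{\mu_k}(x)$ for \emph{every} $\mu_k$ --- corrected by the per-step inexactness drift $\mu_k\norm{\nabla_x\mathcal{L}^{\text{ag}}_{\mu_k}(u^k,x^k)-\nabla d^{\text{ag}}_{\mu_k}(x^k)}\le 2\sqrt{\delta\mu_k}$ coming from \eqref{grad_approx}; this drift telescopes to an $O(R_{\text{d}})$ total precisely because the $\mu_k$ increase geometrically. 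Alternatively, if one reads Steps~2--3 as restarting from a fixed $x^0=0$ at each doubling, then $\norm{x^k}=0$ and $\norm{x^k-x^*}=\norm{x^*}\le R_{\text{d}}$ make both bounds trivial, leaving only the geometric book-keeping of the projection count, which is the residual nuisance of reconciling the ceiling functions and the geometric sum with the precise constants displayed in the statement.
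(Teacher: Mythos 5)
Your proposal follows essentially the same route as the paper's proof: double $\mu_k$ until it exceeds $\mu^*=\max\{16R_{\text{d}}^2/\epsilon,\,L_f/\norm{G}^2\}$ (giving the logarithmic outer count), argue that a single inexact gradient step at that $\mu$ already yields feasibility and the one-sided suboptimality bounds, and sum the inner costs \eqref{inner_comp_Lips} as a geometric series dominated by its last term. The one place you go beyond the paper --- the explicit non-expansiveness-plus-drift control of the carried-forward dual iterate $x^k$ --- addresses a point the paper's proof leaves implicit (its Theorems \ref{aug_grad_outer} and \ref{th_complexity_aug} are stated for a fixed starting point, while \textbf{A-IAL} does not reset $x^k$ when $\mu_k$ changes), so this extra care is welcome rather than a deviation.
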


\begin{proof}
From Theorem \ref{th_complexity_aug}, it can be seen that the
inexact  gradient augmented  Lagrangian method performs a single
outer iteration if the optimal smoothing parameter $\mu^*
=\max\left\{
\frac{16R_d^2}{\epsilon},\frac{L_f}{\norm{G}^2}\right\}$ is chosen.
Therefore, by iteratively doubling an arbitrary initial value
$\mu_0$ of the smoothing parameter, we attain $\mu^*$ after:
$$N_{\epsilon}^{\text{out}} = \left \lceil \log_2 \left( \frac{\mu^*}{\mu_0} \right) \right\rceil $$
iterations. If the optimal value $\mu^*$ is attained, a single
\textbf{A-IAL} iteration would be sufficient to obtain an
$\epsilon-$optimal primal point. However, since we do not know in
advance $f^*$, we check only the feasibility criterion and stop if a
prespecified accuracy is reached. This stopping criterion ensures
that the  final  point $u^k$, when the algorithm stops, satisfies:
\begin{equation*}
 -\norm{x^*} \epsilon \le f(u^k) - f^* \le \epsilon, \qquad \text{dist}_{\mathcal{K}}(Gu^k+g) \le \epsilon.
\end{equation*}
From \eqref{inner_comp_Lips} it can be seen that the maximal number
of projections performed by the Algorithm \textbf{A-IAL}, in order
to ensure the above set of criteria, is given by:
\begin{align*}
\sum\limits_{k=1}^{N_{\epsilon}^{\text{out}}}
N_{\epsilon,k}^{\text{in}} & =
\sum\limits_{k=1}^{N_{\epsilon}^{\text{out}}}
\left \lceil \sqrt{\frac{6(L_f + \mu_k \norm{G}^2)D_U^2}{\epsilon}} \right\rceil \\
& \le N_{\epsilon}^{\text{out}} + \sum\limits_{k=1}^{N_{\epsilon}^{\text{out}}} \left[ \left(\frac{6L_fD_U^2}{\epsilon}\right)^{1/2} + 2^{k/2} \left(\frac{(6\mu_0)^{1/2} \norm{G}D_U}{\epsilon^{1/2}} \right)\right]  \\
& \le N_{\epsilon}^{\text{out}}\left[\left( \frac{6 L_f D_U^2}{\epsilon}\right)^{1/2} + 1\right] \!+\! 2 \left( \frac{\mu^*}{\mu_0}\right)^{1/2}\! \left( \frac{(3 \mu_0)^{1/2} \norm{G}D_U}{\epsilon^{1/2}}\right)\\
& \le N_{\epsilon}^{\text{out}}\left[\left( \frac{6 L_f D_U^2}{\epsilon}\right)^{1/2} + 1\right] \!+\! \left( \frac{16R_d^2}{\epsilon} \!+\! \frac{L_f}{\norm{G}^2} \right)^{1/2}\! \left( \frac{ 4 \norm{G}D_U}{\epsilon^{1/2}}\right)\\
& \le N_{\epsilon}^{\text{out}}\left[\left( \frac{6 L_f
D_U^2}{\epsilon}\right)^{1/2} + 1\right] + \frac{16 \norm{G} R_d
D_U}{\epsilon}+ \frac{ 4 L_f^{1/2}D_U}{\epsilon^{1/2}},
\end{align*}
which proves our statement.
\end{proof}

\noindent The above result establishes that the Algorithm
\textbf{A-IAL}  performs $\mathcal{O}\left(
\frac{1}{\epsilon}\right)$ total  projections onto simple set
$\mathcal{K} \times U$ in order to obtain a primal point satisfying
\eqref{our_criteria}. Note that the order of the computational
complexity  is the same for the Algorithm \textbf{A-IAL} and  for
the inexact gradient augmented  Lagrangian method. However, this
adaptive scheme \textbf{A-IAL} has the advantage that it is
implementable, i.e. the stopping criterion can be checked and the
parameters of the method are computable.


\subsection{Inexact first order Lagrangian method for a
modified Nesterov smoothing} \label{subsec_nssmoothing} The
smoothing strategy presented in the previous sections is equivalent
with the application of the classical Moreau smoothing technique on
the entire dual function $d$. Unlike this classical approach, we
take in this section a new different path: we make use of the new
separable structure  of the dual function \eqref{dual_separation}
and we only smooth the \textit{Lagrangian} part $d_U$ of the dual
function and keep the nonsmooth  part $d_{\mathcal{K}}$ unchanged.
Based on  \eqref{dual_separation}, we introduce the following smooth
approximation of $d_U$:
\[ d_{U,\mu}(x) = \min\limits_{u \in U} \mathcal{L}_{\mu}(u,x),
\qquad  \text{where} \quad \mathcal{L}_{\mu}(u,x) = f(u) + \langle
x, Gu+g\rangle + \mu \ p_U(u), \]

\vspace{-8pt}

\noindent where $p_U(u)$ is a simple prox-function, continuous and
strongly convex on  $U$. Denote $u_0 =\arg \min_{u \in U} p_U(u)$
and assume without loss of generality that $p_U(u_0) =0$ and its
strong convexity parameter is $1$. Then, we have $p_U(u) \geq 1/2 \|
u - u_0\|^2 \; \forall u \in U$ . One typical example satisfying
these assumptions is $p_U(u) = 1/2 \|u\|^2$.  The function
$d_{U,\mu}$ has Lipschitz continuous gradient \cite{Nes:051}:
\begin{equation*}
\norm{\nabla d_{U,\mu}(x) - \nabla d_{U,\mu}(y)} \le L_{\text{d}}
\norm{x - y} \quad \forall x,y \in \rset^m, \quad \text{with
constant} \quad L_{\text{d}} = \norm{G}^2/\mu.
\end{equation*}

\noindent First, note that if $\mu=0$, then we recover the classical
Lagrangian and dual functions. Secondly, the gradient of $d_{U,\mu}$
satisfies:
$$\nabla d_{U,\mu}(x) = Gu_{\mu}^*(x)+g,  \qquad \text{where} \qquad u_{\mu}^*(x) \in \arg\min\limits_{u \in U} \mathcal{L}_{\mu}(u,x).$$
Moreover, we use in the sequel the following notation for the
partial gradient of $ \mathcal{L}_{\mu}$:
\begin{align*}
\nabla_x \mathcal{L}_{\mu}(u,x) = Gu+g.
\end{align*}
The smoothed dual function $d_{U,\mu}$ leads to a novel smooth
approximation of the composite dual  function $d$, that we aim to
maximize using fast gradient method:
\begin{equation*}
f_\mu^* =  \max\limits_{x \in \rset^m} \; d_{\mu}(x) \quad \left(=
d_{U,\mu}(x) + d_{\mathcal{K}}(x)\right).
\end{equation*}
We denote with $X^*_{\mu} = \arg\max_{x \in \rset^m}  \; d_{\mu}(x)$
the optimal solution set of the smoothed dual problem and
$x^*_{\mu}$ an optimal point. It is important to note that, in many
cases, $u_{\mu}^*(x)$ cannot be computed exactly, but within a
pre-specified accuracy, which leads us to the inexact framework
introduced in the previous section. Thus, in the rest of the section
we define $u_{\mu}(x) \in U$ the inexact solution of the inner
problem satisfying:
\begin{equation}\label{func_approx}
 0 \le  \mathcal{L}_{\mu}(u_{\mu}(x),x) - d_{U,\mu}(x) \le \delta.
\end{equation}
Then, we can  derive a first order  inexact oracle for  the smoothed
dual function $d_{U,\mu}$:

\begin{theorem}
Let $\mu,\delta>0$, then we have the following first order inexact
$(3 \delta, 2L_{\text{d}})$-oracle  for the smoothed dual function
$d_{U,\mu}$:
\begin{align}\label{inexact_oracle}
0 \le \mathcal{L}_{\mu}&(u_{\mu}(y),y) +  \langle \nabla_x
\mathcal{L}_{\mu}(u_{\mu}(y),y), x - y \rangle - d_{U,\mu}(x) \le
\frac{2L_{\text{d}}}{2}\norm{x - y}^2 +  {3}\delta
\end{align}
for all $x,y \in \rset^m$,  where $u_{\mu}(y) \in U$ satisfies
\eqref{func_approx} and $L_d = \frac{\norm{G}^2}{\mu}$.
\end{theorem}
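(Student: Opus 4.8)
The plan is to mirror the argument used for the augmented Lagrangian oracle in Theorem~\ref{th_inexact_oracle}, but exploiting a structural simplification: here the smoothed Lagrangian $\mathcal{L}_{\mu}(u,x) = f(u) + \langle x, Gu+g\rangle + \mu\, p_U(u)$ is \emph{affine} in the dual variable $x$, with constant partial gradient $\nabla_x \mathcal{L}_{\mu}(u,x) = Gu+g$. First I would dispatch the left inequality. Because $\mathcal{L}_{\mu}(u,\cdot)$ is affine, its first-order expansion about $y$ is exact, so for every fixed $u$ one has $\mathcal{L}_{\mu}(u,y) + \langle \nabla_x \mathcal{L}_{\mu}(u,y), x-y\rangle = \mathcal{L}_{\mu}(u,x)$. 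Taking $u = u_{\mu}(y)$ and using $\mathcal{L}_{\mu}(u_{\mu}(y),x) \ge \min_{u\in U}\mathcal{L}_{\mu}(u,x) = d_{U,\mu}(x)$ immediately gives the nonnegativity on the left of \eqref{inexact_oracle}. (Note this is an identity here, slightly cleaner than the convexity inequality needed in the augmented case.)

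The crux is the right inequality, and the key lemma is a gradient-approximation estimate showing that the inner accuracy \eqref{func_approx} forces the approximate gradient $Gu_{\mu}(x)+g$ close to the true gradient $\nabla d_{U,\mu}(x) = Gu_{\mu}^*(x)+g$. To obtain it I would fix $u$ and set $h(x) = \mathcal{L}_{\mu}(u,x) - d_{U,\mu}(x)$. Since $\mathcal{L}_{\mu}(u,\cdot)$ is affine and $\nabla d_{U,\mu}$ is $L_{\text{d}}$-Lipschitz, $\nabla h$ is $L_{\text{d}}$-Lipschitz, and moreover $h(x)\ge 0$ for all $x$ by definition of $d_{U,\mu}$ as a minimum. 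The standard gradient-dominance-from-below bound for a nonnegative $C^{1,1}$ function then yields
\[
h(x) - \min_{x\in\rset^m} h(x) \ge \frac{1}{2L_{\text{d}}}\norm{\nabla h(x)}^2 .
\]
Evaluating at $u = u_{\mu}(x)$ and using $h(x) - \min h \le h(x) \le \delta$ from \eqref{func_approx} gives
\[
\norm{Gu_{\mu}(x) - Gu_{\mu}^*(x)} = \norm{\nabla_x \mathcal{L}_{\mu}(u_{\mu}(x),x) - \nabla d_{U,\mu}(x)} \le \sqrt{2\delta L_{\text{d}}}.
\]

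With this estimate the right inequality follows the same chain as in Theorem~\ref{th_inexact_oracle}: start from the descent-lemma lower bound $d_{U,\mu}(x) \ge d_{U,\mu}(y) + \langle \nabla d_{U,\mu}(y), x-y\rangle - \frac{L_{\text{d}}}{2}\norm{x-y}^2$ afforded by the $L_{\text{d}}$-Lipschitz gradient; replace $d_{U,\mu}(y)$ by $\mathcal{L}_{\mu}(u_{\mu}(y),y) - \delta$ via \eqref{func_approx}; write $\nabla d_{U,\mu}(y) = \nabla_x \mathcal{L}_{\mu}(u_{\mu}(y),y) + \bigl(\nabla d_{U,\mu}(y) - \nabla_x \mathcal{L}_{\mu}(u_{\mu}(y),y)\bigr)$ and bound the cross term by Cauchy--Schwarz together with the gradient estimate; and finally absorb the resulting $\sqrt{2\delta L_{\text{d}}}\,\norm{x-y}$ term using Young's inequality $\alpha t \le \frac{t^2}{2}+\frac{\alpha^2}{2}$ with $t = \sqrt{L_{\text{d}}}\norm{x-y}$ and $\alpha = \sqrt{2\delta}$. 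Rearranging then leaves $\frac{2L_{\text{d}}}{2}\norm{x-y}^2 + 3\delta$ on the right-hand side, establishing the claimed $(3\delta, 2L_{\text{d}})$-oracle; in fact the sharper residual $2\delta$ is available here, which I would remark on.

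The only genuinely delicate point is the gradient-approximation step, and specifically using the correct Lipschitz constant. Because $\mathcal{L}_{\mu}$ is affine in $x$, the auxiliary function $h$ has $\nabla h$ that is $L_{\text{d}}$-Lipschitz (rather than $2L_{\text{d}}$, as in the augmented case), which is exactly what lets the inner tolerance scale like $\delta$ rather than $\delta^2$ while still controlling the gradient error; everything else is the elementary convexity, Cauchy--Schwarz, and Young's-inequality bookkeeping already carried out in Theorem~\ref{th_inexact_oracle}.
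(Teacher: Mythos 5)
Your proof is correct and follows exactly the route the paper intends: the paper omits this proof, stating only that it mirrors Theorem~\ref{th_inexact_oracle}, and your argument is precisely that adaptation (exact first-order expansion for the left inequality, the nonnegative-$h$ gradient bound to get $\norm{Gu_{\mu}(x)-Gu_{\mu}^*(x)}\le\sqrt{2\delta L_{\text{d}}}$, then the descent lemma, Cauchy--Schwarz, and Young's inequality). Your additional observation---that the affine dependence of $\mathcal{L}_{\mu}(u,\cdot)$ on $x$ makes the auxiliary function $h$ have an $L_{\text{d}}$-Lipschitz rather than $2L_{\text{d}}$-Lipschitz gradient, yielding the sharper residual $2\delta$---is valid, and the stated $(3\delta, 2L_{\text{d}})$-oracle follows a fortiori.
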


\begin{proof}
The proof is similar with the one  given in Theorem
\ref{th_inexact_oracle} and thus we omit it.
\end{proof}

\noindent The relation \eqref{inexact_oracle} implies that  the
smoothed dual function $d_{U,\mu}$ is equipped with a $(3\delta,
2L_{\text{d}})$-oracle, i.e. $\phi_{\delta,L} (x) =
\mathcal{L}_{\mu}(u_{\mu}(x),x)$ and $\nabla \phi_{\delta,L} (x)=
\nabla_x \mathcal{L}_{\mu}(u_{\mu}(x),x) = Gu_{\mu}(x)+g$. We notice
that there are some previous results on the application of Nesterov
smoothing technique for solving the dual of linear equality
constrained convex problems
\cite{BotHen:15,DevGli:12,NecSuy:08,QuoNec:15,QuoCev:14}, but these
algorithms require exact solution of the inner subproblem and more
conservative convergence estimates are derived. Further, we estimate
the rate of convergence of Algorithm \textbf{ICFG} on the modified
Nesterov smoothing of the dual function. First, let us redefine the
following finite quantity:
\begin{align*} R_{\text{d}} =
\max\limits_{\mu \in \mathcal{C}} \min\limits_{x^*_{\mu} \in
X^*_{\mu}} \; &\norm{x^0 - x^*_{\mu}} < \infty,
\end{align*}
where $\mathcal{C}$ is a compact set in $\rset_+$. From
\cite{NedOzd:09}[Lemma 1] it follows immediately that such an
$R_{\text{d}}$ is always finite for $\mathcal{C} = [0,\ c]$, with $0
<c < \infty$, provided that a Slater vector exists. Note that we can
also bound $\min\limits_{x^* \in X^*} \; \norm{x^0 -x^*} \le
R_{\text{d}}$. Using  Theorem \ref{ifg_rate_conv}, we get the
following estimate on dual suboptimality:

\begin{corollary}\label{corr_icfg_fastgrad_NS}
Under Assumptions \ref{strong_conv} with $\sigma_f=0$,
\ref{simple_set} and \ref{assump_bound_multipliers}, let
$\mu,\delta>0$ and $(x^k, y^k)_{k \ge 0}$ be the sequences generated
by Algorithm \textbf{ICFG}($d_{U,\mu},d_{\mathcal{K}},3\delta,
2L_{\text{d}}$), with $\theta_{k+1} \!\!=\! \frac{1 + \sqrt{1 + 4
\theta_k^2 }}{2}$ for $k \!\!\ge\! 1$. Then,  we have the following
estimate on dual suboptimality:
\begin{equation*}
f_\mu^* - d_{\mu}(x^k) \le \frac{4 L_{\text{d}}
R_{\text{d}}^2}{(k+1)^2} +  3k \delta.
\end{equation*}
\end{corollary}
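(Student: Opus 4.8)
The plan is to deduce this corollary as a direct application of Theorem~\ref{ifg_rate_conv}(ii) to the smoothed dual maximization $f_\mu^* = \max_{x \in \rset^m} d_\mu(x)$, after recasting it as a composite minimization of the form \eqref{general_comp}. First I would write $\max_{x} d_\mu(x) = -\min_x \big( (-d_{U,\mu}(x)) + (-d_{\mathcal{K}}(x)) \big)$ and identify the composite data $(\phi,\psi)$ as $\phi = -d_{U,\mu}$ and $\psi = -d_{\mathcal{K}}$. The smooth part $\phi$ is convex, since $d_{U,\mu}$ is concave (an infimum over $u$ of functions affine in $x$), and by the Lipschitz-gradient property recorded before the statement it has Lipschitz continuous gradient with constant $L_{\text{d}} = \norm{G}^2/\mu$. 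The nonsmooth part $-d_{\mathcal{K}}$ is exactly the indicator of $\mathcal{K}^*$ (because $d_{\mathcal{K}}$ is the support function of $\mathcal{K}$, hence the $0/-\infty$ indicator of $\mathcal{K}^*$), which is closed, convex and simple under Assumption~\ref{simple_set}; the ambient set $Q = \rset^m$ is trivially simple and convex. Thus the structural hypotheses of \eqref{general_comp} are met.

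The second step is to certify the inexact oracle. The theorem immediately preceding this corollary (relation \eqref{inexact_oracle}) shows that $\phi$ is equipped with a first order inexact $(3\delta, 2L_{\text{d}})$-oracle, whose approximate value and gradient are $\mathcal{L}_{\mu}(u_{\mu}(x),x)$ and $Gu_{\mu}(x)+g$, with $u_\mu(x)$ satisfying \eqref{func_approx}. This is precisely the data the algorithm is invoked with in the call \textbf{ICFG}($d_{U,\mu},d_{\mathcal{K}},3\delta,2L_{\text{d}}$), so Theorem~\ref{ifg_rate_conv} applies with the substitutions $L \leftarrow 2L_{\text{d}}$ and $\delta \leftarrow 3\delta$. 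Running the accelerated variant (the $\theta_{k+1}$ update), part (ii) of that theorem then yields
\[
F(x^k) - F^* \le \frac{2 (2L_{\text{d}}) \norm{x^0 - x^*_\mu}^2}{(k+1)^2} + k(3\delta) = \frac{4 L_{\text{d}} \norm{x^0 - x^*_\mu}^2}{(k+1)^2} + 3k\delta,
\]
where $F = \phi+\psi = -d_\mu$, so that $F(x^k) - F^* = f_\mu^* - d_\mu(x^k)$, and $x^*_\mu \in X^*_\mu$ is an optimal minimizer.

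Finally I would replace the problem-dependent quantity $\norm{x^0 - x^*_\mu}$ by the uniform bound $R_{\text{d}}$. By the definition $R_{\text{d}} = \max_{\mu \in \mathcal{C}} \min_{x^*_\mu \in X^*_\mu} \norm{x^0 - x^*_\mu}$, choosing $x^*_\mu$ to be a nearest optimal dual point and recalling that the working value of $\mu$ lies in the compact set $\mathcal{C}$, we have $\norm{x^0 - x^*_\mu}^2 \le R_{\text{d}}^2$, which produces the stated estimate. The argument is essentially routine once the oracle of the preceding theorem is in hand; the only points requiring genuine care are the sign bookkeeping in passing from the dual maximization to the composite minimization \eqref{general_comp}, and verifying that the chosen $\mu$ belongs to $\mathcal{C}$ so that the finite quantity $R_{\text{d}}$ legitimately dominates $\norm{x^0 - x^*_\mu}$ (its finiteness itself being guaranteed by \cite{NedOzd:09} under a Slater vector).
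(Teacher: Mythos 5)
Your proposal is correct and follows exactly the route the paper intends: the corollary is stated as an immediate consequence of Theorem \ref{ifg_rate_conv}(ii) applied to the composite problem $-d_\mu = (-d_{U,\mu}) + (-d_{\mathcal{K}})$ equipped with the $(3\delta,2L_{\text{d}})$-oracle from \eqref{inexact_oracle}, followed by the substitution $\norm{x^0-x^*_\mu}\le R_{\text{d}}$. Your sign bookkeeping and the verification of the composite structure are the details the paper leaves implicit, and they check out.
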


\noindent We further estimate the rate of convergence of the average
primal sequence generated by \textbf{ICFG} on the modified Nesterov
smoothing of the dual function. For simplicity of the exposition, we
assume further that $ x^0 = 0, \; u^0 = 0, \; R_{\text{d}} \ge 1, \;
\norm{G} > 1, \; D_U> 1$ and $\epsilon < 1$.  However, in the case
when one of these conditions  does not hold, then there is no change
in the order of our results, but slight differences in constants.
Using these simplifications, we obtain the following outer iteration
complexity for \textbf{ICFG}.

\begin{theorem}\label{ns_grad_outer}
Under Assumptions \ref{strong_conv} with $\sigma_f=0$,
\ref{simple_set} and \ref{assump_bound_multipliers}, let
$\mu,\delta>0$ and $(x^k,y^k)_{k \ge 0}$ be the sequence generated
by   Algorithm \textbf{ICFG}($d_{U,\mu},d_{\mathcal{K}},3\delta,
2L_{\text{d}}$) with $\theta_{k+1} = \frac{1 + \sqrt{1 + 4
\theta_k^2 }}{2}$ for all $k \geq 1$.  Define $u^i = u_{\mu}(x^i)$
such that $\mathcal{L}_{\mu}(u^i,x^i) - d_{U,\mu}(x^i) \le \delta $.
For any fixed number of outer iterations $K \ge 1$, if we set
$\mu(K) = \frac{2^{3/2} \|G\| R_{\text{d}}}{D_U K}$, the average
primal point ${\hat
u}^K$ defined by \eqref{average_seq} satisfies:\\
$(i)$ The primal infeasibility is bounded sublinearly as
 follows:
\begin{equation}\label{ns_feasibility_main}
 \text{dist}_{\mathcal{K}}(G\hat{u}^K+g) \le  \frac{2^{3/2} \norm{G}D_U}{K}   +
2\left( \frac{\norm{G} D_U  \delta}{R_d} \right)^{1/2}.
\end{equation}
$(ii)$ The primal suboptimality gap is bounded sublinearly by:
\begin{align}\label{ns_subopt_main}
-\frac{2^{3/2} \norm{G}D_UR_d}{K}  - 2\left( \norm{G} D_U R_d \delta
\right)^{1/2} \leq  f(\hat{u}^K) - f^* \le \frac{2^{3/2}\norm{G}
R_{\text{d}} D_U}{K} + 3 K \delta.
\end{align}
\end{theorem}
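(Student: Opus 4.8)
The plan is to run the standard dual-to-primal recovery machinery on the modified Nesterov smoothing, mirroring the augmented-Lagrangian argument of Appendix A.2. The central object is a \emph{primal--dual gap inequality}. I would first show that, for the sequences generated by Algorithm \textbf{ICFG} applied to $F = -d_{U,\mu} - d_{\mathcal{K}}$ (so that $\phi_{\delta,L}(\cdot) = -\mathcal{L}_{\mu}(u_{\mu}(\cdot),\cdot)$, $\nabla\phi_{\delta,L}(\cdot) = -(Gu_{\mu}(\cdot)+g)$ and $\psi = \iota_{\mathcal{K}^*}$), the Tseng/FISTA estimate-sequence inequality underlying Theorem \ref{ifg_rate_conv}(ii) holds not only at the optimum but for every $z$. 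Plugging in the inexact linearizations $\mathcal{L}_{\mu}(u^i,x^i) = f(u^i) + \iprod{x^i, Gu^i+g} + \mu p_U(u^i)$ and exploiting the cancellation of the terms $\iprod{x^i, Gu^i+g}$, together with convexity of $f$ (so that $f(\hat{u}^K)\le\frac{1}{S^{\theta}_K}\sum_i\theta_i f(u^i)$) and $p_U\ge 0$, this reduces to
\[ f(\hat{u}^K) + \iprod{G\hat{u}^K+g,\; z}\;\le\; d_{\mu}(x^K) + \frac{L_{\text{d}}\norm{z}^2}{S^{\theta}_K} + 3K\delta \qquad \forall z\in\mathcal{K}^*, \]
where I use $x^0 = 0$ and $L = 2L_{\text{d}}$. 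I expect establishing this inequality, with $z$ free and with the accumulated oracle error tracked as $3K\delta$, to be the main obstacle, since it is where the acceleration error accumulation and the composite (cone) structure interact.

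Two further ingredients are essentially free. First, weak duality through the optimal multiplier $x^*\in\mathcal{K}^*$ (for which $d_{\mathcal{K}}(x^*)=0$ and $d(x^*)=f^*$) gives on one hand the Lagrangian lower bound $f(\hat{u}^K) + \iprod{G\hat{u}^K+g,\;x^*}\ge f^*$ and on the other $f(u)\ge f^* - \norm{x^*}\,\text{dist}_{\mathcal{K}}(Gu+g)$ for any $u\in U$, with $\norm{x^*}\le R_{\text{d}}$. Second, the smoothing bias is controlled by a minimax estimate: since $p_U(u)\le\frac12 D_U^2$ on $U$, one has $f^*\le f_\mu^* \le f^* + \frac{\mu}{2}D_U^2$, and trivially $d_{\mu}(x^K)\le f_\mu^*$.

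For the infeasibility bound $(i)$ I would set $w = (G\hat{u}^K+g) - [G\hat{u}^K+g]_{\mathcal{K}}\in\mathcal{K}^*$ with $\norm{w} = \text{dist}_{\mathcal{K}}(G\hat{u}^K+g)$ and, using Moreau's decomposition, $\iprod{G\hat{u}^K+g,\; w} = \norm{w}^2$. Choosing $z = x^* + t\,w/\norm{w}\in\mathcal{K}^*$ in the gap inequality, invoking the Lagrangian lower bound and the bound on $f_\mu^*$, cancels $f^*$ and yields $t\norm{w}\le\frac{\mu}{2}D_U^2 + 3K\delta + \frac{L_{\text{d}}(R_{\text{d}}+t)^2}{S^{\theta}_K}$ for all $t\ge 0$; minimizing the resulting bound on $\norm{w}$ over $t$ produces the characteristic $\sqrt{\cdot}$ term. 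Substituting $L_{\text{d}} = \norm{G}^2/\mu$, $\mu(K) = \frac{2^{3/2}\norm{G}R_{\text{d}}}{D_U K}$ (note $L_{\text{d}}\mu = \norm{G}^2$) and $S^{\theta}_K = \theta_{K-1}^2\ge K^2/4$ collapses the $K$-dependence: the bias term becomes of order $\norm{G}D_U/K$ and, because $\mu$ scales like $1/K$, the error term becomes the $K$-independent quantity $2(\norm{G}D_U\delta/R_{\text{d}})^{1/2}$, which gives $(i)$.

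Finally, $(ii)$ follows from the pieces already assembled. The upper bound comes from taking $z = 0\in\mathcal{K}^*$ in the gap inequality and using $d_{\mu}(x^K)\le f_\mu^*\le f^* + \frac\mu2 D_U^2$, so that $f(\hat{u}^K) - f^*\le\frac\mu2 D_U^2 + 3K\delta$, which after substituting $\mu(K)$ is of order $\norm{G}R_{\text{d}}D_U/K + 3K\delta$; the lower bound is obtained by combining the weak-duality estimate $f(\hat{u}^K) - f^*\ge -R_{\text{d}}\,\text{dist}_{\mathcal{K}}(G\hat{u}^K+g)$ with the infeasibility bound $(i)$ just proved, which reproduces the stated constants $-\frac{2^{3/2}\norm{G}D_UR_{\text{d}}}{K} - 2(\norm{G}D_U R_{\text{d}}\delta)^{1/2}$.
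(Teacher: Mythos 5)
Your proposal is correct in substance and shares the paper's skeleton -- the Tseng/estimate-sequence relation \eqref{corr2_relation}, the identity $\Delta(z,y^i)=\mathcal{L}_{\mu}(u^i,z)-d_{\mu}(z)$ with cancellation of the $y^i$-terms, convexity to pass to $\hat u^K$, the smoothing bias $d_\mu(x^K)\le f^*+\tfrac{\mu}{2}D_U^2$, and weak duality through $x^*$ for the lower bound in $(ii)$ -- but it extracts the infeasibility bound $(i)$ by a genuinely different route. The paper (Appendices A.2/A.3) telescopes the momentum iterates $l^k=x^{k-1}+\theta_k(x^k-x^{k-1})$ to identify the averaged gradient $\frac{1}{S_K^\theta}\sum_j\theta_j\nabla_x\mathcal{L}_\mu(u^j,y^j)$ with $\frac{2L_{\text{d}}}{S_K^\theta}(l^K-l^0)$ and then bounds $\norm{l^K-x^*_\mu}$ from \eqref{corr2_relation}; you instead keep the gap inequality valid for \emph{all} $z\in\mathcal{K}^*$, test it at the shifted multiplier $z=x^*+t\,w/\norm{w}$ with $w=G\hat u^K+g-[G\hat u^K+g]_{\mathcal{K}}$ (so $\iprod{G\hat u^K+g,w}=\norm{w}^2$ by Moreau), and optimize over $t$. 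Your route is arguably cleaner here: the paper's telescoping identity is stated for an unconstrained dual step $x^k=y^k+\frac{1}{2L_{\text{d}}}\nabla_x\mathcal{L}_\mu(u^k,y^k)$, which in the modified Nesterov smoothing must be amended for the projection onto $\mathcal{K}^*$ (one has to absorb a normal-cone element of $\mathcal{K}^*$, which lies in $\mathcal{K}$, into the distance), whereas your argument needs only the gap inequality at arbitrary $z\in\mathcal{K}^*$, which the Tseng relation already supplies. The price is slightly looser constants: expanding $\sqrt{\tfrac{\mu}{2}D_U^2+3K\delta+L_{\text{d}}R_{\text{d}}^2/S_K^\theta}$ termwise yields an extra $O(\norm{G}D_U/K)$ contribution and a $\delta$-term with a constant larger than the stated $2$; this is not a substantive defect, since the paper's own Appendix A.3 derivation ($8\sqrt{3L_{\text{d}}\delta/k}$) also does not reproduce the constant $2$ claimed in \eqref{ns_feasibility_main}, and the orders in $K$, $\delta$ and the $K$-independence of the error term after substituting $\mu(K)$ all come out right in both arguments.
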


\begin{proof}
This proof  is similar to the one given in Appendix A.2. However,
it is also given  in the companion paper  \cite[Appendix
A.3]{NecPat:15}.
\end{proof}

\vspace{5pt}

\noindent It is important to remark that if the functions $f$ and
$p_U$  are simple, then by definition, the solution of the inner
problem $\min_{u \in U} \mathcal{L}_{\mu}(u,x)$ can be found
efficiently (e.g. in linear time or even in closed form). Otherwise,
$\mathcal{L}_{\mu}(\cdot,x)$ is a composition between a function $f$
with $\nabla f$  Lipschitz continuous with constant $L_f>0$, and a
$\mu$-strongly convex and simple function $p_U$  and thus, Nesterov
optimal method for composite problems with a strongly convex part
and a  smooth part
 finds an approximate solution $u_{\mu}(x)$ for the inner problem  satisfying
$\mathcal{L}_{\mu}(u_{\mu}(x),x) - d_{U,\mu}(x) \le \delta$ in
\cite{Nes:13}:
\begin{equation}\label{inner_bound_nesLip}
N_{\delta}^{\text{in}} = \left\lceil \sqrt{\frac{L_f}{\mu}} \log
\left(\frac{ L_f  D_U^2}{4\delta}\right) \right \rceil
\end{equation}
projections onto the simple set $U$. Now, we are ready to derive the
overall  iteration complexity of Algorithm \textbf{ICFG} in this
case:

\begin{theorem}\label{th_modif_ns}
Under Assumptions \ref{strong_conv} with $\sigma_f=0$,
\ref{simple_set} and \ref{assump_bound_multipliers}, let
$\mu,\delta>0$ and $\epsilon>0$, and the sequences $(x^k,y^k)_{k \ge
0}$ be generated by the Algorithm
\textbf{ICFG}($d_{U,\mu},d_{\mathcal{K}},3\delta, 2L_{\text{d}}$)
with $\theta_{k+1} = \frac{1 + \sqrt{1 + 4 \theta_k^2 }}{2}$ for all
$k \geq 1$. Also let $N_{\epsilon}^{\text{out}} = \left\lceil
\frac{6 \norm{G}D_U R_{\text{d}}}{\epsilon} \right\rceil$. Assume
further that  the primal average point $\hat{u}^k$ is given by
\eqref{average_seq}, then the
following assertions hold:\\
$(i)$ If the function $f$ is simple, then by setting an optimal
smoothing parameter:
$$\mu = \frac{2^{3/2} \norm{G} R_d}{D_U N_{\epsilon}^{\text{out}}}  \quad \text{and} \quad \delta=0,$$
the primal average point $\hat{u}^k$ is $\epsilon-$optimal after
$$ k = \left \lceil \frac{6 \norm{G}R_{\text{d}} D_U}{\epsilon} \right \rceil$$
projections onto the primal feasible set $U$ and polar cone $\mathcal{K}^*$.\\
$(ii)$ If the function $f$ is not simple, but $\nabla f$ is
Lipschitz continuous with  $L_f>0$ and, at each outer iteration $k
\ge 1$, Nesterov optimal method for strongly convex and smooth
objective functions \cite{Nes:13} is called to obtain an approximate
optimal  point $u^k = u_\mu (x^k) \in U$ such that
$\mathcal{L}_{\mu} (u^k, x^k) - d_{U,\mu}(x^k) \le \delta$. By
setting an optimal smoothing parameter:
$$\mu = \frac{2^{3/2} \norm{G} R_d}{D_U N_{\epsilon}^{\text{out}}}
 \quad  \text{and} \quad \delta = \frac{\epsilon}{6 N_{\epsilon}^{\text{out}}},$$ then the
average primal point $\hat{u}^k$ is $\epsilon-$optimal after at most
\begin{equation*}
k = \left \lfloor \left(\frac{24 \norm{G}R_{\text{d}} D_U^2
L_f^{1/2}}{\epsilon^{3/2}} \!+\! \frac{12 L_f^{1/2}\norm{G}D_U
R_d}{\epsilon} \right) \left[\log \left(\frac{36
\norm{G}R_{\text{d}} L_f D_U^3}{\epsilon^2}\right)+ 1 \right]
 \right \rfloor
\end{equation*}
projections onto the set $U$ and  $\left \lceil \frac{6
\norm{G}R_{\text{d}} D_U}{\epsilon} \right \rceil$ projections onto
the   cone $\mathcal{K}^*$.
\end{theorem}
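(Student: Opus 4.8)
The plan is to feed the outer convergence rates of Theorem~\ref{ns_grad_outer} together with the inner iteration count \eqref{inner_bound_nesLip} into a two-level budget: first choose the smoothing parameter $\mu$ and inner accuracy $\delta$ so that both the primal infeasibility \eqref{ns_feasibility_main} and the primal suboptimality \eqref{ns_subopt_main} drop below $\epsilon$, and then report the total work as the number of outer iterations times the per-iteration inner cost. The key structural observation is that one step of Algorithm \textbf{ICFG}$(d_{U,\mu},d_{\mathcal{K}},3\delta,2L_{\text{d}})$ performs exactly one projection onto the polar cone $\mathcal{K}^*$: the prox-step in step~2 of \textbf{ICFG} minimizes a quadratic model plus the term $d_{\mathcal{K}}(z)$ over $z\in\rset^m$, and since $d_{\mathcal{K}}$ represents the indicator of $\mathcal{K}^*$ this reduces to a single Euclidean projection onto $\mathcal{K}^*$. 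Hence the count of projections onto $\mathcal{K}^*$ equals the number of outer iterations $N_\epsilon^{\text{out}}$ in both cases, and only the inner subproblem $\min_{u\in U}\mathcal{L}_\mu(u,x)$ contributes projections onto $U$.

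For part $(i)$, since both $f$ and $p_U$ are simple the inner problem is solved exactly, so $\delta=0$ and each outer iteration costs exactly one projection onto $U$. Substituting $K=N_\epsilon^{\text{out}}=\lceil 6\norm{G}R_{\text{d}}D_U/\epsilon\rceil$ and the stated $\mu$ into \eqref{ns_feasibility_main}--\eqref{ns_subopt_main} with $\delta=0$ leaves only the $O(1/K)$ terms; using $R_{\text{d}}\ge 1$ one checks $2^{3/2}\norm{G}D_U/K\le\epsilon$ and $2^{3/2}\norm{G}R_{\text{d}}D_U/K\le\epsilon$, so $\hat u^K$ is $\epsilon$-optimal. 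As the number of outer iterations equals $N_\epsilon^{\text{out}}$, this gives exactly $\lceil 6\norm{G}R_{\text{d}}D_U/\epsilon\rceil$ projections onto $U$ and the same number onto $\mathcal{K}^*$.

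For part $(ii)$, the inner problem is $\mu$-strongly convex with an $L_f$-smooth part, so Nesterov's optimal composite method reaches accuracy $\delta$ in the $N_\delta^{\text{in}}$ projections given by \eqref{inner_bound_nesLip}. With the stated $N_\epsilon^{\text{out}}$, $\mu$ and $\delta=\epsilon/(6N_\epsilon^{\text{out}})$, I would first verify $\epsilon$-optimality from \eqref{ns_feasibility_main}--\eqref{ns_subopt_main}: the choice balances the two error sources, e.g. $3K\delta=\epsilon/2$, the term $2^{3/2}\norm{G}R_{\text{d}}D_U/K$ contributes at most $2^{3/2}\epsilon/6$, and the $\delta^{1/2}$-terms evaluate to a fixed fraction of $\epsilon/R_{\text{d}}$, so that (using $R_{\text{d}},\norm{G},D_U\ge1$) both criteria fall below $\epsilon$. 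Then the total number of projections onto $U$ is $N_\epsilon^{\text{out}}N_\delta^{\text{in}}$; substituting $\mu=2^{3/2}\norm{G}R_{\text{d}}/(D_UN_\epsilon^{\text{out}})$ gives $\sqrt{L_f/\mu}=\sqrt{L_fD_UN_\epsilon^{\text{out}}/(2^{3/2}\norm{G}R_{\text{d}})}$, while substituting $\delta$ turns the logarithm into $\log(L_fD_U^2/4\delta)$, which I would upper bound (dropping the factor $4$) by $\log(36\norm{G}R_{\text{d}}L_fD_U^3/\epsilon^2)$. Collecting the product, the leading term scales like $(N_\epsilon^{\text{out}})^{3/2}\approx\epsilon^{-3/2}$ and the lower-order contribution from the ceiling in \eqref{inner_bound_nesLip} scales like $N_\epsilon^{\text{out}}\approx\epsilon^{-1}$, yielding the prefactor $\tfrac{24\norm{G}R_{\text{d}}D_U^2L_f^{1/2}}{\epsilon^{3/2}}+\tfrac{12L_f^{1/2}\norm{G}D_UR_{\text{d}}}{\epsilon}$ multiplied by $[\log(\cdots)+1]$; the projections onto $\mathcal{K}^*$ again number $N_\epsilon^{\text{out}}=\lceil 6\norm{G}R_{\text{d}}D_U/\epsilon\rceil$.

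The main obstacle is the algebraic bookkeeping in part $(ii)$, caused by the self-referential definition of $\mu$ through the ceiling $N_\epsilon^{\text{out}}$. Because $\sqrt{L_f/\mu}$ carries a factor $\sqrt{N_\epsilon^{\text{out}}}$, the product $N_\epsilon^{\text{out}}N_\delta^{\text{in}}$ behaves like $(N_\epsilon^{\text{out}})^{3/2}$ times a logarithm, and turning this into the clean two-term prefactor requires separating the $+1$ of each ceiling and invoking the normalizing assumptions $x^0=u^0=0$, $R_{\text{d}}\ge1$, $\norm{G}>1$, $D_U>1$, $\epsilon<1$ to absorb all lower-order terms into the displayed constants. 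Pinning down the exact constants $24$ and $12$ is routine but tedious; the genuine content is the $\epsilon$-optimality verification and the observation that a single projection onto $\mathcal{K}^*$ suffices per outer iteration.
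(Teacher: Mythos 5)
Your proposal is correct and follows essentially the same route as the paper: it feeds the outer rates of Theorem~\ref{ns_grad_outer} and the inner count \eqref{inner_bound_nesLip} into the product $N_{\epsilon}^{\text{out}}N_{\delta}^{\text{in}}$ with the stated $\mu$ and $\delta$, exactly as the paper's Appendix-based argument does (your explicit remark that the prox step with $d_{\mathcal{K}}$ is a single projection onto $\mathcal{K}^*$ is a point the paper leaves implicit, but it is not a different method). No gaps.
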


\begin{proof}
By forcing both hand sides in \eqref{ns_subopt_main} to be equal
with $\epsilon$, then we obtain:
\begin{align}\label{outer_bound_nes}
 N_{\epsilon}^{\text{out}} & =
 \left\lceil \frac{6 \norm{G}D_U R_{\text{d}}}{\epsilon} \right\rceil
\end{align}
outer projections onto $\mathcal{K}^*$, and the inner accuracy
satisfies (provided that $L_f>0$):
\begin{align}\label{inner_acc_nes}
\delta  \!=\! \min\left\{ \frac{\epsilon^2}{8 \norm{G}D_U
R_{\text{d}}}, \frac{\epsilon}{6 N_{\epsilon}^{\text{out}}} \right\}
 \le \frac{\epsilon^2}{36 \norm{G}D_U R_{\text{d}}}.
\end{align}
Considering the optimal choice of the smoothing parameter (see
\cite[Appendix A.3]{NecPat:15}) and taking into account the bound
\eqref{outer_bound_nes} we get:
$$\mu(N_{\epsilon}^{\text{out}}) = \frac{2^{3/2} \norm{G}R_{\text{d}}}{D_UN_{\epsilon}^{\text{out}}}.$$
Further, using \eqref{inner_acc_nes} and the smoothing parameter
$\mu(N_{\epsilon}^{\text{out}})$ in the inner complexity
\eqref{inner_bound_nesLip}, we get the following bound on the total
number of inner iterations:
\begin{align*}
N_{\epsilon}^{\text{out}} & N^{\text{in}}_{\delta}  \le \frac{12 \norm{G} D_U R_{\text{d}}}{\epsilon} \left( \sqrt{\frac{3L_fD_U^2}{\epsilon}} + \sqrt{L_f} \right) \log \left(\frac{36 \norm{G}R_{\text{d}} L_f D_U^3 }{\epsilon^2}\right) \\
 & \hspace{7cm} + \frac{12 \norm{G}D_U R_{\text{d}}}{\epsilon} \\
& \le \left(\frac{24 \norm{G}R_{\text{d}} D_U^2
L_f^{1/2}}{\epsilon^{3/2}} \!+\! \frac{12 L_f^{1/2}\norm{G}D_U
R_d}{\epsilon} \right) \left[\log \left(\frac{36
\norm{G}R_{\text{d}} L_f D_U^3}{\epsilon^2}\right)+ 1 \right].
\end{align*}
These bounds confirm our result.
\end{proof}

\begin{remark}
If the objective function $f$ is strongly convex, i.e. it satisfies
Assumption \ref{strong_conv} with $\sigma_f>0$, and has Lipschitz
gradient of constant $L_f>0$, then it is well known that the dual
function $d$ has Lipschitz gradient with constant $L_{\text{d}} =
\frac{\norm{G}^2}{\sigma_f}$ \cite{NecPat:14}, and therefore any
smoothing technique is redundant. In this setting, using the first
order inexact oracle framework from previous sections, we can easily
derive overall complexity of Algorithm \textbf{ICFG} for the average
primal point $\hat{u}^k$ of order
$\mathcal{O}\left(\frac{1}{\sqrt{\epsilon}}\log
(\frac{1}{\epsilon^{3/2}})\right)$ projections onto the set $U$ and
$\mathcal{O}(\frac{1}{\sqrt{\epsilon}})$ projections onto the cone
$\mathcal{K}$, see  \cite{NecPat:14} for more details. \qed
\end{remark}

\begin{remark}
 { If we assume that there exists a  bound $R_p$ such
that $\max\limits_{x \in {\cal K}^*} \; \norm{u^0-u(x)} \le R_p <
\infty $, then we can remove the boundedness assumption on $U$ (i.e.
Assumption 2.2 $(ii)$) and all the previous complexity results hold
by replacing  $D_U$ with $R_p$. } \qed
\end{remark}

\noindent In conclusion,  the inexact fast gradient method for  the
modified  Nesterov smoothing of the dual function performs the same
number $\mathcal{O}\left( \frac{1}{\epsilon}\right)$ of projections
onto the cone as the previous inexact first order  augmented
Lagrangian methods. However, in Nesterov smoothing method for smooth
objective functions the number
$\mathcal{O}\left(\frac{1}{\epsilon^{3/2}}\right)$ of projections
onto $U$ is significantly larger  than in the previous augmented
Lagrangian smoothing methods. On the other hand, the optimal
smoothing parameter $\mu$ given in the augmented Lagrangian
framework cannot be fixed a priori due to its dependence on
$\norm{x^*}$ via $R_{\text{d}}$ and thus we need some adaptive
scheme, while the optimal choice of $\mu$ in the case of Nesterov
smoothing strategy can be easily computed in the initialization
phase according to Theorem \ref{th_modif_ns}.


\section{First order penalty methods}
\label{sect_penalty}  {The complexity analysis of
primal-dual methods from Section \ref{sec_Lagrangian} has been based
on the Assumption \ref{assump_bound_multipliers}. Also the most papers on penalty methods make the
strong Assumption \ref{assump_bound_multipliers}, that is there
exists an  optimal Lagrange multiplier for the primal convex problem
\eqref{problem} \cite{LanMon:13}. This property is usually
guaranteed through a Slater type condition, which in the large-scale
settings it is very difficult to check computationally or such a
condition might not even hold. 
In this section we
remove Assumption \ref{assump_bound_multipliers} and  analyze
various penalty strategies for solving the conic constrained convex
optimization problem \eqref{problem} without this assumption.
Therefore, we now consider the conic convex problem \eqref{problem}
which  does not  necessarily admit a Lagrange multiplier that closes
the duality gap. 
To the best of our knowledge this is one of
the first computational complexity results for first order penalty
methods for conic problems when it is not necessarily assumed the
existence of  a Lagrange multiplier that closes the duality gap.}

\noindent  First, denote $f_* = \min\limits_{u \in U} f(u)$. Given
the difficulties induced by the linear conic constraints, the
original problem \eqref{problem} can be reformulated in this case,
using a (non)differentiable penalty function, as an optimization
problem with simple constraints. Therefore, for a penalty parameter
$\rho>0$, the basic penalty reformulations of problem
\eqref{problem} are as follows:
\begin{equation}\label{penalty_diff}
 \min\limits_{u \in U} \quad \psi_{\rho}(u) \quad \left(
 = f(u) + \frac{\rho}{2}\text{dist}_{\mathcal{K}}(Gu+g)^2 \right),
\end{equation}

\vspace{-30pt}

\begin{equation}\label{penalty_nondiff}
 \min\limits_{u \in U} \quad \phi_{\rho}(u) \quad  \left(
 = f(u) + \rho \text{dist}_{\mathcal{K}}(Gu+g) \right).
\end{equation}
Depending on the context, we denote $u^*_{\rho} \in
\arg\min\limits_{u \in U} \psi_{\rho}(u)$ or $u^*_{\rho} \in
\arg\min\limits_{u \in U} \phi_{\rho}(u)$. It is well-known that
both formulations have certain advantages and disadvantages. The
differentiable formulation \eqref{penalty_diff} features good
smoothness properties, but it is regarded as an inexact penalty
problem, i.e. as $\rho \to \infty$ we have $u^*_{\rho} \to u^* \in
U^*$. On the other hand, the nondifferentiable formulation
\eqref{penalty_nondiff} lacks smoothness properties, but in the case
when  optimal Lagrange multipliers for \eqref{problem} exist, there
is a finite threshold $\rho^* >0$ such that for any $\rho \ge
\rho^*$, we have $u^*_{\rho} = u^* \in U^*$. We recall the convexity
property of the distance:
\begin{equation}\label{conv_distcone}
 \text{dist}_{\mathcal{K}}(Gu+g) \ge \text{dist}_{\mathcal{K}}(Gv+g) +
 \langle G^T s(v), u-v  \rangle
 \quad \forall u,v \in \rset^m,
\end{equation}
where $ s(v)  \in \partial \text{dist}_{\mathcal{K}}(Gv+g)$ denotes
a subgradient at $v$ of function $\text{dist}_{\mathcal{K}}(G\cdot
+g)$. From \eqref{conv_distcone}, it can be easily seen that for any
$u \in \rset^m$ such that $Gu+g \in \mathcal{K}$ results:
\begin{equation}\label{conv_distcone_case}
\langle s(v), Gv- Gu   \rangle \ge \text{dist}_{\mathcal{K}}(Gv+g)
 \quad \forall v \in \rset^m.
\end{equation}
Further we analyze both penalty strategies combined with fast
gradient method and we derive the overall  complexities for them.


\subsection{Fast gradient differentiable penalty method}
If the gradient $\nabla f$ is $L_f>0$ Lipschitz continuous, then the
penalty function $\psi_{\rho}$ has also Lipschitz continuous
gradients with constant $L_{\psi} = L_f + \rho \norm{G}^2$. Note
that the optimality conditions of \eqref{penalty_diff} are:
\begin{equation}\label{optim_cond_diff}
 \langle \nabla f (u^*_{\rho}) + \rho \text{dist}_{\mathcal{K}}(Gu^*_{\rho}+g) G^T s(u^*_{\rho}), u - u^*_{\rho} \rangle \ge 0 \quad \forall u \in U.
\end{equation}
Now, we state our result regarding the computational complexity of
the penalty method with differentiable penalty, regarding simple
objective functions (set $L_f=0$ in the complexity estimate) or
smooth objective functions with Lipschitz continous gradients (i.e.
$L_f>0$). Define $\Delta^* = f^* - f_*$.

\begin{theorem} \label{th_smooth_diff}
Under Assumptions \ref{strong_conv} with $\sigma_f = 0$ and
\ref{simple_set}, let $\rho>0, \epsilon \in (0,\Delta^*/2)$ and
$(u^k,v^k)_{k \ge 0}$ be the sequence generated by the Algorithm
\textbf{ICFG}$(\psi_{\rho},0,0,L_{\psi})$ with $\theta_{k+1} =
\frac{1 + \sqrt{1 + 4 \theta_k^2}}{2}$ for $k \ge 1$. If the penalty
parameter satisfies:
\begin{equation}
\label{penalty_bound}
 \rho  \ge \frac{4\Delta^*}{\epsilon^2}
\end{equation}
and $f$ is simple ($L_f = 0$) or $\nabla f$ Lipschitz continuous
($L_f > 0$), then after
$$k=  \left\lceil \sqrt{\frac{2L_fD_U^2 }{\epsilon}} +
 \frac{(8\Delta^*)^{1/2}\norm{G} D_U}{\epsilon^{3/2}}\right\rceil
 $$
projections onto the simple set $\mathcal{K} \times U$, we have:
\begin{equation}
\label{optimalitycond} -\Delta^* \le f(u^k) - f^* \le \epsilon,
\qquad \text{dist}_{\mathcal{K}}(Gu^k + g) \le \epsilon.
\end{equation}
\end{theorem}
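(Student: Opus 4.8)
The plan is to run the Inexact Composite Fast Gradient method on the differentiable penalty problem with $\psi = \phi + 0$, deduce a guarantee on $\psi_\rho(u^k) - \psi_\rho(u^*_\rho)$ from Theorem~\ref{ifg_rate_conv}(ii), and then convert this into the two desired bounds on objective suboptimality and constraint infeasibility. First I would invoke Theorem~\ref{ifg_rate_conv}(ii) with $\delta=0$, $L=L_\psi = L_f + \rho\norm{G}^2$ and optimal value $\psi_\rho(u^*_\rho)$, giving
\begin{equation*}
\psi_\rho(u^k) - \psi_\rho(u^*_\rho) \le \frac{2 L_\psi \norm{u^0 - u^*_\rho}^2}{(k+1)^2} \le \frac{2(L_f + \rho\norm{G}^2) D_U^2}{(k+1)^2},
\end{equation*}
using Assumption~\ref{simple_set}$(ii)$ to bound $\norm{u^0 - u^*_\rho}^2 \le D_U^2$.

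\textbf{Lower envelope on the penalty optimum.} The main analytic ingredient is to locate $\psi_\rho(u^*_\rho)$ between $f_*$ and $f^*$. Since $u^*_\rho$ minimizes $\psi_\rho$ over $U$ and any $u^* \in U^*$ is feasible (so $\text{dist}_{\mathcal{K}}(Gu^*+g)=0$), we get $\psi_\rho(u^*_\rho) \le \psi_\rho(u^*) = f^*$. Conversely, dropping the nonnegative penalty term gives $\psi_\rho(u^*_\rho) \ge f(u^*_\rho) \ge f_*$, so in particular $f(u^*_\rho) - f^* \ge f_* - f^* = -\Delta^*$. These two facts, together with the ICFG guarantee above, will be combined with the forced choice $\epsilon^2/2 \ge 2 L_\psi D_U^2/(k+1)^2$ obtained by setting the stated $k$, so that $\psi_\rho(u^k) - f^* \le \psi_\rho(u^k) - \psi_\rho(u^*_\rho) \le \epsilon^2/2$ (here I would verify that the displayed $k$ indeed makes $2L_\psi D_U^2/(k+1)^2 \le \epsilon^2/2$, using $\sqrt{a+b}\le\sqrt a+\sqrt b$ to split the $L_f$ and $\rho\norm{G}^2$ contributions). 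From $\psi_\rho(u^k) = f(u^k) + \tfrac{\rho}{2}\text{dist}_{\mathcal{K}}(Gu^k+g)^2 \le f^* + \epsilon^2/2$ I would read off \emph{both} conclusions.

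\textbf{Extracting the two inequalities.} Dropping the penalty term from $\psi_\rho(u^k)\le f^*+\epsilon^2/2$ does not immediately give $f(u^k)-f^*\le\epsilon$, because a priori $f(u^k)$ could exceed $f^*$ while the penalty term is what is small; the standard remedy here is to bound $f(u^k)\ge f_*$ so that the penalty term satisfies $\tfrac{\rho}{2}\text{dist}_{\mathcal{K}}(Gu^k+g)^2 \le f^* - f_* + \epsilon^2/2 = \Delta^* + \epsilon^2/2$. Invoking the penalty bound \eqref{penalty_bound}, $\rho \ge 4\Delta^*/\epsilon^2$, then yields $\text{dist}_{\mathcal{K}}(Gu^k+g)^2 \le \tfrac{2}{\rho}(\Delta^*+\epsilon^2/2) \le \tfrac{\epsilon^2}{2\Delta^*}(\Delta^*+\epsilon^2/2)$; using $\epsilon < \Delta^*/2$ this is at most $\epsilon^2$, giving the feasibility bound $\text{dist}_{\mathcal{K}}(Gu^k+g)\le\epsilon$. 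The lower suboptimality bound $f(u^k)-f^*\ge -\Delta^*$ follows directly from $f(u^k)\ge f_* = f^*-\Delta^*$. For the upper bound $f(u^k)-f^*\le\epsilon$, I would feed the just-obtained feasibility estimate back: since $f(u^k) = \psi_\rho(u^k) - \tfrac{\rho}{2}\text{dist}_{\mathcal{K}}(Gu^k+g)^2 \le f^* + \epsilon^2/2$, and $\epsilon<1$ forces $\epsilon^2/2\le\epsilon$, we conclude $f(u^k)-f^*\le\epsilon$.

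The main obstacle I anticipate is the chicken-and-egg structure in the last paragraph: the bound on $f(u^k)-f^*$ and the bound on the infeasibility are coupled through the single inequality $\psi_\rho(u^k)\le f^*+\epsilon^2/2$, and one must be careful about signs since $f(u^k)$ can legitimately fall below $f^*$ (the penalty optimum is not feasible). The clean way through is to treat the two nonnegative-after-shifting quantities $f(u^k)-f_*\ge 0$ and $\tfrac{\rho}{2}\text{dist}_{\mathcal{K}}(Gu^k+g)^2\ge 0$ separately, bounding their sum $\psi_\rho(u^k)-f_* \le \Delta^* + \epsilon^2/2$ and then isolating each. The only real calculation is verifying that the stated iteration count $k$ achieves accuracy $\epsilon^2/2$ rather than $\epsilon$ in function value (the extra factor, absorbed by the $\epsilon^{3/2}$ scaling in the $\rho\norm{G}^2$ term), which is where the $\epsilon^{3/2}$ dependence in the complexity originates.
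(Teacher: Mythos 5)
There is a genuine quantitative gap at the pivot of your argument: the displayed iteration count does \emph{not} deliver inner accuracy $\epsilon^2/2$ on the penalty problem; it delivers only accuracy $\epsilon$. Indeed, with $\rho = 4\Delta^*/\epsilon^2$ one has $2L_\psi D_U^2 = 2L_fD_U^2 + 8\Delta^*\norm{G}^2D_U^2/\epsilon^2$, while the stated $k$ satisfies $(k+1)^2 \ge 2L_fD_U^2/\epsilon + 8\Delta^*\norm{G}^2D_U^2/\epsilon^3 = 2L_\psi D_U^2/\epsilon$, so the bound of Theorem \ref{ifg_rate_conv}$(ii)$ gives exactly $\psi_\rho(u^k)-\psi_\rho^*\le\epsilon$ and nothing better. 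Forcing $2L_\psi D_U^2/(k+1)^2\le \epsilon^2/2$ instead would require $k = \Theta\bigl(\sqrt{L_\psi}\,D_U/\epsilon\bigr) = \Theta\bigl(\sqrt{\Delta^*}\norm{G}D_U/\epsilon^2\bigr)$, i.e.\ an $\mathcal{O}(1/\epsilon^2)$ complexity. Your closing remark that the extra half-power is ``absorbed by the $\epsilon^{3/2}$ scaling'' is exactly backwards: the $\epsilon^{3/2}$ in the theorem arises from solving the penalty subproblem only to accuracy $\epsilon$ while $\rho\sim 1/\epsilon^2$; demanding accuracy $\epsilon^2$ destroys it.

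The good news is that your decomposition survives with inner accuracy $\epsilon$, because the penalty parameter has slack. From $\psi_\rho(u^k)\le\psi_\rho^*+\epsilon\le \psi_\rho(u^*)+\epsilon= f^*+\epsilon$ and $f(u^k)\ge f_*$ you get $\frac{\rho}{2}\text{dist}_{\mathcal{K}}(Gu^k+g)^2\le\Delta^*+\epsilon$, hence
\begin{equation*}
\text{dist}_{\mathcal{K}}(Gu^k+g)^2\le\frac{\epsilon^2(\Delta^*+\epsilon)}{2\Delta^*}\le\frac{\epsilon^2}{2}+\frac{\epsilon^2}{4}=\frac{3}{4}\epsilon^2
\end{equation*}
using $\epsilon\le\Delta^*/2$, while $f(u^k)-f^*\le\psi_\rho(u^k)-f^*\le\epsilon$ is immediate; this matches the stated $k$. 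Note that this repaired argument is genuinely different from, and more elementary than, the paper's: the paper first bounds $\text{dist}_{\mathcal{K}}(Gu^*_\rho+g)\le\sqrt{\Delta^*/\rho}$ via the optimality conditions \eqref{optim_cond_diff}, and then uses the subgradient inequality \eqref{conv_distcone_case} to establish $\epsilon\ge\frac{\rho}{2}\left[\text{dist}_{\mathcal{K}}(Gu^*_\rho+g)-\text{dist}_{\mathcal{K}}(Gu^k+g)\right]^2$, a ``difference of distances'' bound giving $\text{dist}_{\mathcal{K}}(Gu^k+g)\le\sqrt{2\epsilon/\rho}+\sqrt{\Delta^*/\rho}$, which is sharper by a constant factor than the elementary $\sqrt{2(\Delta^*+\epsilon)/\rho}$. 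For this particular choice of $\rho$ both routes yield $\le\epsilon$, so your approach is viable once the inner accuracy is corrected from $\epsilon^2/2$ to $\epsilon$.
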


\begin{proof}
First, observe that by taking $u = u^*$ in \eqref{optim_cond_diff}
and using \eqref{conv_distcone_case}, we obtain:
\begin{equation*}
 \langle \nabla f (u^*_{\rho}), u^* - u^*_{\rho}\rangle \ge \rho \text{dist}_{\mathcal{K}}(Gu^*_{\rho}+g)^2.
\end{equation*}
Taking into account that $f(u^*_{\rho}) \ge f_*$, then from the
convexity property of $f$ results:
\begin{equation*}
 \text{dist}_{\mathcal{K}}(Gu^*_{\rho}+g) \le \sqrt{\frac{f(u^*) - f_*}{\rho}} = \sqrt{\frac{\Delta^*}{\rho}}.
\end{equation*}
Therefore, a sufficient condition for
$\text{dist}_{\mathcal{K}}(Gu^*_{\rho} +g) \le \epsilon/2$ is $\rho
\ge \frac{4\Delta^*}{\epsilon^2}$.
Let $\bar{u} \in U$ satisfying:
\begin{equation}\label{eps_subopt}
 f(\bar{u}) + \frac{\rho}{2}\text{dist}_{\mathcal{K}}(G\bar{u}+g)^2 - f(u^*_{\rho}) -
 \frac{\rho}{2}\text{dist}_{\mathcal{K}}(Gu^*_{\rho}+g)^2= \psi_{\rho}(\bar{u}) - \psi_{\rho}^* \le \epsilon.
\end{equation}
Using the convexity property of $f$, \eqref{conv_distcone_case} and
\eqref{optim_cond_diff}, then the relation \eqref{eps_subopt}
implies:
\begin{align*}
& \epsilon \ge \frac{\rho}{2}\text{dist}_{\mathcal{K}}(G\bar{u}
+g)^2  -
 \frac{\rho}{2}\text{dist}_{\mathcal{K}}(Gu^*_{\rho} + g)^2  + \langle \nabla f(u^*_{\rho}),
\bar{u} - u^*_{\rho}\rangle \nonumber\\
 & \overset{\eqref{optim_cond_diff}}{\ge} \frac{\rho}{2}\text{dist}_{\mathcal{K}}(G\bar{u} + g)^2  -  \frac{\rho}{2}\text{dist}_{\mathcal{K}}(Gu^*_{\rho} + g)^2  + \rho \text{dist}_{\mathcal{K}}(Gu^*_{\rho}+g) \langle G^T
 s(u^*_{\rho}),  u^*_{\rho} - \bar{u}\rangle \nonumber\\
 & \overset{\eqref{conv_distcone_case}}{\ge} \frac{\rho}{2}\text{dist}_{\mathcal{K}}(G\bar{u} + g)^2  +
 \frac{\rho}{2}\text{dist}_{\mathcal{K}}(Gu^*_{\rho} + g)^2 -
 \nonumber\\
& \hspace{20pt}  - \rho \text{dist}_{\mathcal{K}}(Gu^*_{\rho}+g)
\left( \text{dist}_{\mathcal{K}}(Gu^*_{\rho}+g) +  \langle
G^T s(u^*_{\rho}), \bar{u} - u^*_{\rho} \rangle \right) \nonumber\\
 &=  \frac{\rho}{2}\left[ \text{dist}_{\mathcal{K}}(Gu^*_{\rho}+g) - \text{dist}_{\mathcal{K}}(G\bar{u}+g)\right]^2.
\end{align*}
The last relation  leads to:
\begin{equation*}
\text{dist}_{\mathcal{K}} ( G\bar{u} +g) \le
\sqrt{\frac{2\epsilon}{\rho}} + \text{dist}_{\mathcal{K}} ( G
u^*_{\rho} +g).
\end{equation*}
For a penalty parameter satisfying \eqref{penalty_bound} and
$\epsilon \le \Delta^*/2$, we reach $\epsilon-$infeasibility:
\begin{equation*}
 \text{dist}_{\mathcal{K}}(G\bar{u}+g) \le \frac{\epsilon^{3/2}}{\sqrt{2\Delta^*}} + \frac{\epsilon}{2} \le \epsilon.
\end{equation*}
To obtain suboptimality bounds, first note that the left inequality
stating $f(\bar{u}) - f(u^*) \ge -\Delta^*$ is trivial. Second, the
relation \eqref{eps_subopt} implies:
\begin{equation*}
  f(\bar{u}) - f(u^*) \le f(\bar{u}) + \frac{\rho}{2}\text{dist}_{\mathcal{K}}(G\bar{u} + g)^2
  - f(u^*)  \le  \psi_{\rho}(\bar{u}) - \psi_{\rho}^* \le \epsilon.
\end{equation*}
By choosing $\rho \ge \frac{4\Delta^*}{\epsilon^2}$ and solving the
differentiable penalty problem \eqref{penalty_diff} with accuracy
$\epsilon$ leads to an $\epsilon-$optimal point of the original
problem \eqref{problem} which satisfies optimality criteria
\eqref{optimalitycond}. For any fixed penalty parameter $\rho > 0$,
Algorithm \textbf{ICFG}$(\psi_{\rho},0,0,L_{\psi})$ generates a
sequence $(u^k)_{k \ge 0}$ with the convergence rate (see Theorem
\ref{ifg_rate_conv}):
\begin{equation*}
  \psi_{\rho}(u^k) - \psi_{\rho}^* \le \frac{2(L_f + \rho\norm{G}^2)D_U^2}{(k+1)^2}.
\end{equation*}
This rate of convergence implies that after
\begin{equation*}
k = \left\lceil \sqrt{\frac{2(L_f +
\rho\norm{G}^2)D_U^2}{\epsilon}}\right\rceil
\end{equation*}
projections onto $\mathcal{K} \times U$, we get $\psi_{\rho}(u^k)-
\psi^*_{\rho} \le \epsilon$. Further, taking into account the
estimation of the penalty  parameter \eqref{penalty_bound}, we can
bound the previous estimate as:
\begin{equation*}
\left\lceil \sqrt{\frac{2L_fD_U^2 }{\epsilon}} +
\sqrt{\frac{8\Delta^*\norm{G}^2 D_U^2}{\epsilon^3}}\right\rceil.
\end{equation*}
Note that the last estimate implies our result.
\end{proof}

\noindent The following simple example shows the tightness of our
result given in Theorem \ref{th_smooth_diff}.

\begin{example}\label{example}
\noindent Given $p > 1$, consider the following convex problem:
\begin{align*}
 \min_{u \in \rset^2}  \quad f(u) \quad \left( := u_2 \right) \quad \text{s.t.}  \quad |u_2|^p \le u_1, \quad u_1=0,
\end{align*}
where $U = \{u \in \rset^2 | \;  |u_2|^p \le u_1 \}$. Note that the
feasible set contains only the trivial point $(0,0)$, and implicitly
we have $u_1 \ge 0$. The Slater condition does not hold in this
case. First, we show that this optimization problem does not admit a
Lagrange multiplier  closing the duality gap. The dual problem of
the above example is given by:
\begin{align*}
  \max\limits_{x \in \rset^{}} \min_{u \in \rset^2}  \quad u_2  + x u_1 \quad \text{s.t.}  \quad |u_2|^p \le u_1.
\end{align*}
Since the objective function is linear,  an equivalent form of the
dual problem is:
\begin{align*}
 \max\limits_{x \in \rset^{}} \min_{u} & \;\; \pm u_1^{1/p} + x u_1.
\end{align*}
Considering the case $u_2 = - u_1^{1/p}$ (for the other case we can
use the same reasoning),  {with the implicit
constraint $u_1 \ge 0$,} the optimal solution $u^*_1$ of this
minimization subproblem is given by: $u^*_1 = (p
x)^{\frac{p}{1-p}}.$ Replacing this value into the cost, and taking
into account that we have to keep $u_1^* \ge 0$, then we obtain the
dual problem:
\begin{equation*}
  {\sup_{x \ge 0}} \; \;\left (\frac{1}{px} \right)^{\frac{1}{p-1}}\left( \frac{1}{p} - 1 \right).
\end{equation*}
The dual function is negative for any $x \ge 0$, and thus we do not
have a bounded  Lagrange multiplier attaining the supremum. Further
we estimate the value of the penalty parameter $\rho$ such that we
get $\epsilon-$infeasibility for $u_{\rho}^*$. The quadratic penalty
reformulation is given~by:
\begin{align}
 \min_{u \in \rset^2}  \quad u_2 + \frac{\rho}{2}u_1^2 \label{examp_penalty} \quad   \text{s.t.}  \quad |u_2|^p \le u_1. \nonumber
\end{align}
Observe that the minimizer $u^*_\rho$ of the above problem is on the
boundary of the feasible set, i.e. $|u_2|^p = u_1$. Then, we get the
following equivalent problem:
\begin{align*}
 \min_{u_2 \in \rset^{}} & \quad u_2 + \frac{\rho}{2}u_2^{2p}.
\end{align*}
The optimality condition of the above problem is given by $ 1 + \rho
p [(u^*_\rho)_2]^{2p-1} = 0,$ which immediately implies:
\begin{equation}
(u^*_{\rho})_2 = \left( - \frac{1}{p \rho}\right)^{\frac{1}{2p -1}}.
\end{equation}
From this expression and the fact that $|(u^*_\rho)_2|^p =
(u^*_{\rho})_1$, it can be derived that $\epsilon-$infeasibility is
attained, i.e. $|(u^*_{\rho})_1| \le \epsilon$, provided that the
penalty parameter satisfies:
\begin{equation*}
   \rho \ge \frac{1}{p} \left( \frac{1}{\epsilon}\right)^{2- \frac{1}{p}} = \frac{\epsilon^{1/p}}{p} \left( \frac{1}{\epsilon}\right)^{2}.
\end{equation*}
Observing that $\frac{\epsilon^{1/p}}{p}$ is a convex function of
$p$, the minimal value of this expression is attained for $p^* =
\ln(1/\epsilon)$. Replacing this value in the above estimate, we
have:
\begin{equation*}
   \rho \ge \frac{\epsilon^{\frac{1}{\ln(1/\epsilon)}}}{\ln(1/\epsilon)} \left( \frac{1}{\epsilon}\right)^{2}
   = \frac{1}{e \ln(1/\epsilon)} \left( \frac{1}{\epsilon}\right)^{2},
\end{equation*}
where $e$ is the Euler constant. Therefore, for this example, the
penalty parameter should satisfy $\rho = \mathcal{O}\left(
\frac{1}{\epsilon^2}\right)$ (up to a logarithmic factor), which
confirms the tightness of our result given in Theorem
\ref{th_smooth_diff}. \qed
\end{example}


\subsection{Fast gradient nondifferentiable penalty method}
\label{subsec_nondif} Given the nonsmoothness feature of the penalty
function $\phi_{\rho}$, we replace  the nonsmooth term
$\text{dist}_{\mathcal{K}}(G \cdot + g)$ with a basic smooth
approximation. Thus, for a given smoothing parameter $\mu>0$, we
replace the original problem with the following smooth  problem:
\begin{equation}\label{smooth_nondif_pen}
\min\limits_{u \in U} \; \; \phi_{\rho,\mu}(u) \;\quad \left(= f(u)+
\rho\sqrt{\text{dist}_{\mathcal{K}}(Gu+g)^2 + \mu^2}\right).
\end{equation}
Note that if $\nabla f$ is Lipschitz continuous with constant
$L_f>0$, then $\nabla \phi_{\rho,\mu}$ is Lipschitz continuous with
constant  {$L_{\phi} = L_f + \frac{\rho
\norm{G}}{\mu}$}. We denote $u^*_{\mu} \in \arg\min\limits_{u \in U}
\; \phi_{\rho,\mu}(u)$ and, for simplicity, assume that  $\Delta^*
\ge \epsilon$ (otherwise some minor changes in constants will
occur).

\begin{theorem}\label{th_smooth_nondiff}
Under Assumptions \ref{strong_conv} with $\sigma_f=0$ and
\ref{simple_set}, let $\mu,\rho, \epsilon
>0 $ and the sequence $(u^k,v^k)_{k \ge 0}$ be generated by the
Algorithm \textbf{ICFG}$(\phi_{\rho,\mu},0,0,L_{\phi})$ with
$\theta_{k+1} = \frac{1 + \sqrt{1 + 4\theta_k^2}}{2}$ for all $k \ge
1$. If the following conditions hold:
\begin{equation}\label{penalty_bound_nondif}
 \rho  = \frac{2\Delta^*}{\epsilon} + 1 \quad \text{and} \quad \mu = \frac{\epsilon}{2},
\end{equation}
and $f$ is simple (convention $L_f = 0$) or $\nabla f$ Lipschitz
continuous ($L_f > 0$), then after
$$
k=  \left\lceil \sqrt{\frac{2L_f D_U^2}{\epsilon}} + \sqrt{\frac{12
\Delta^* \norm{G}D_U^2}{\epsilon^3}} \right\rceil
 $$
projections onto the primal simple feasible set $\mathcal{K} \times
U$, we have:
\begin{equation*}
-\Delta^* \le f(u^k) - f^* \le \epsilon, \qquad
\text{dist}_{\mathcal{K}}(Gu^k + g) \le \epsilon.
\end{equation*}
\end{theorem}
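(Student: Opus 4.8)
The plan is to show that the parameter choices \eqref{penalty_bound_nondif} turn an approximately optimal point of the smoothed penalty problem \eqref{smooth_nondif_pen} into an $\epsilon$-optimal point of \eqref{problem}, and then to invoke the accelerated rate of Theorem \ref{ifg_rate_conv}$(ii)$ to convert the required inner accuracy into the announced number of projections. First I would record the elementary smoothing estimates $\text{dist}_{\mathcal{K}}(Gu+g) \le \sqrt{\text{dist}_{\mathcal{K}}(Gu+g)^2 + \mu^2} \le \text{dist}_{\mathcal{K}}(Gu+g) + \mu$, so that $\phi_{\rho,\mu}$ and the exact nonsmooth penalty $\phi_\rho$ differ by at most $\rho\mu$. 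The central device is a comparison with a true optimum $u^* \in U^*$ of \eqref{problem}: since $u^*$ is feasible we have $\text{dist}_{\mathcal{K}}(Gu^*+g)=0$ and hence $\phi_{\rho,\mu}(u^*) = f^* + \rho\mu$. Writing $\bar u = u^k$ for an iterate with $\phi_{\rho,\mu}(\bar u) - \phi_{\rho,\mu}(u^*_\mu) \le \bar\epsilon$, where $\bar\epsilon$ (of order $\epsilon$) is the inner accuracy to which the penalty problem is solved, optimality of $u^*_\mu$ then yields the single master inequality
\begin{equation*}
f(\bar u) + \rho\sqrt{\text{dist}_{\mathcal{K}}(G\bar u+g)^2 + \mu^2} \le f^* + \rho\mu + \bar\epsilon .
\end{equation*}

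From this one inequality both optimality certificates follow. For the suboptimality upper bound I would use $\sqrt{\text{dist}_{\mathcal{K}}(G\bar u+g)^2+\mu^2} \ge \mu$ to cancel the $\rho\mu$ term and conclude $f(\bar u) - f^* \le \bar\epsilon \le \epsilon$; the lower bound $f(\bar u) - f^* \ge -\Delta^*$ is immediate from $\bar u \in U$ and $f(\bar u) \ge f_* = f^* - \Delta^*$. For feasibility I would instead insert $f(\bar u) \ge f_*$ into the master inequality to obtain $\sqrt{\text{dist}_{\mathcal{K}}(G\bar u+g)^2+\mu^2} \le \frac{\Delta^*+\bar\epsilon}{\rho}+\mu$, then square, cancel $\mu^2$, and arrive at $\text{dist}_{\mathcal{K}}(G\bar u+g)^2 \le A^2 + 2\mu A$ with $A = (\Delta^*+\bar\epsilon)/\rho$. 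With $\rho = 2\Delta^*/\epsilon + 1$ and $\mu = \epsilon/2$ and the standing assumption $\Delta^* \ge \epsilon$, the quantity $A$ is of order $\epsilon/2$, which forces $\sqrt{A^2+2\mu A}\le\epsilon$; this squared estimate (rather than the crude $\text{dist}\le\sqrt{\text{dist}^2+\mu^2}$) together with $\Delta^*\ge\epsilon$ is exactly what pins the feasibility constant down to $\epsilon$.

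Finally, to count projections I would use that $\nabla\phi_{\rho,\mu}$ is Lipschitz with the constant $L_{\phi} = L_f + \rho\norm{G}/\mu$ recorded before the statement, and that boundedness of $U$ gives $\norm{u^0 - u^*_\mu}\le D_U$. Applying Theorem \ref{ifg_rate_conv}$(ii)$ with $\delta=0$ to Algorithm \textbf{ICFG}$(\phi_{\rho,\mu},0,0,L_{\phi})$ yields $\phi_{\rho,\mu}(u^k) - \phi_{\rho,\mu}(u^*_\mu) \le \frac{2L_{\phi} D_U^2}{(k+1)^2}$, so forcing the right-hand side below $\bar\epsilon=\epsilon$ gives $k+1 \ge D_U\sqrt{2L_{\phi}/\epsilon}$. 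Substituting $L_{\phi}=L_f+\rho\norm{G}/\mu$ with the parameters \eqref{penalty_bound_nondif} expands $\rho\norm{G}/\mu$ into a leading $\Theta(\Delta^*\norm{G}/\epsilon^2)$ term plus a lower-order $\Theta(\norm{G}/\epsilon)$ term; using $\Delta^*\ge\epsilon$ to absorb the latter into the former and then $\sqrt{a+b}\le\sqrt a+\sqrt b$ produces precisely the two displayed summands $\sqrt{2L_fD_U^2/\epsilon}$ and $\sqrt{12\Delta^*\norm{G}D_U^2/\epsilon^3}$.

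The main obstacle is the simultaneous calibration of the triple $(\rho,\mu,\bar\epsilon)$: the smoothing term injects the cross term $2\mu A$ into the feasibility estimate, and one must keep $\mu=\epsilon/2$ small enough that $\sqrt{A^2+2\mu A}\le\epsilon$ while keeping it large enough that $L_{\phi}$, hence the iteration count, stays at the announced $\mathcal{O}(\epsilon^{-3/2})$ order. Making all three targets—suboptimality, feasibility, and the inner accuracy fed to \textbf{ICFG}—land at $\epsilon$ simultaneously, while matching the stated constants $2$ and $12$ under the mild hypothesis $\Delta^*\ge\epsilon$, is the only delicate bookkeeping; the structural argument itself is the short comparison inequality above.
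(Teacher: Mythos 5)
Your proposal is correct and follows essentially the same route as the paper's proof: derive the master inequality $\phi_{\rho,\mu}(\bar u)-\phi_{\rho,\mu}^*\le\epsilon$ from Theorem \ref{ifg_rate_conv}$(ii)$, extract the suboptimality bound by comparison with the feasible optimum (for which $\phi_{\rho,\mu}(u^*)=f^*+\rho\mu$), extract feasibility by inserting $f(\bar u)\ge f_*$, and then expand $L_\phi=L_f+\rho\norm{G}/\mu$ under the parameter choices \eqref{penalty_bound_nondif} with $\Delta^*\ge\epsilon$ to get the stated projection count. Your only deviation is squaring and cancelling $\mu^2$ in the feasibility step rather than using the cruder $\text{dist}_{\mathcal{K}}(G\bar u+g)\le\sqrt{\text{dist}_{\mathcal{K}}(G\bar u+g)^2+\mu^2}\le A+\mu$ as the paper does, which is a slight sharpening of the constants but not a different argument.
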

\begin{proof}
\noindent Let $\bar{u} \in U$ be an $\epsilon-$optimal point for the
smoothed penalty problem \eqref{smooth_nondif_pen} satisfying
$\phi_{\rho,\mu}(\bar{u}) - \phi_{\rho,\mu}^* \le \epsilon$, i.e. we
have:
\begin{align}\label{eps_subopt_nondif}
 f(\bar{u}) + \rho \sqrt{\text{dist}_{\mathcal{K}}(G\bar{u}+g)^2 + \mu^2} - f(u^*_{\mu}) &- \rho \sqrt{\text{dist}_{\mathcal{K}}(Gu^*_{\mu}+g)^2 + \mu^2} \le \epsilon.
\end{align}
First, the relation \eqref{eps_subopt_nondif} implies the following:
\begin{align}\label{subopt_right_nondiff}
f(\bar{u}) &-f^* \le f(\bar{u}) + \rho \sqrt{\text{dist}_{\mathcal{K}}(G\bar{u}+g)^2 + \mu^2} - f^* - \rho \mu \nonumber\\
& \le f(\bar{u}) + \rho
\sqrt{\text{dist}_{\mathcal{K}}(G\bar{u}+g)^2 + \mu^2} -
f(u^*_{\mu}) - \rho \sqrt{\text{dist}_{\mathcal{K}}(Gu^*_{\mu}+g)^2
+ \mu^2} \le \epsilon.
\end{align}
Second, from \eqref{eps_subopt_nondif} we have the following
feasiblity relation:
\begin{align*}
 \text{dist}_{\mathcal{K}}(G\bar{u} + g)
&\le \sqrt{\text{dist}_{\mathcal{K}}(G\bar{u} + g)^2 + \mu^2} \\
& \overset{\eqref{subopt_right_nondiff}}{\le} \frac{f(u^*_{\mu}) + \rho \sqrt{\text{dist}_{\mathcal{K}}(Gu^*_{\mu} + g)^2 + \mu^2} - f(\bar{u}) + \epsilon}{\rho} \\
& \le \frac{f^* - f_* + \epsilon}{\rho} + \mu = \frac{\Delta^* +
\epsilon}{\rho} + \mu.
 \end{align*}
Therefore, choosing the parameters conformal to
\eqref{penalty_bound_nondif}, any point satisfying
\eqref{eps_subopt_nondif} is $\epsilon-$optimal in the optimality
criteria \eqref{optimalitycond}.
\noindent Given arbitrary $\mu,\rho>0$, the Algorithm
\textbf{ICFG}($\phi_{\rho,\mu},0,0,L_{\phi} $) applied on the
smoothed  problem \eqref{smooth_nondif_pen} generates
 primal sequences $(u^k,v^k)_{k \ge 0}$ satisfying the following convergence rate (see Theorem \ref{ifg_rate_conv}):
\begin{equation*}
 \phi_{\rho, \mu}(u^k) - \phi_{\rho,\mu}^* \le \frac{2\left(
 L_f + \rho \frac{\norm{G}}{\mu}\right)D_U^2}{k^2}.
\end{equation*}
Thus, the $\epsilon-$suboptimality for problem
\eqref{smooth_nondif_pen} is attained after at most:
\begin{equation*}
 \left\lceil \sqrt{\frac{2L_f D_U^2}{\epsilon}} +
 \sqrt{\frac{2\rho \norm{G} D_U^2}{\mu\epsilon}} \right\rceil
\end{equation*}
projections onto the set $\mathcal{K} \times U$. Taking into account
the assumptions \eqref{penalty_bound_nondif}, we obtain the
computational complexity estimate given in the theorem.
\end{proof}

\vspace{2pt}

\begin{remark}
It is easy to prove  that if the objective function $f$ is strongly
convex, i.e. it satisfies Assumption \ref{strong_conv} with
$\sigma_f>0$, then the differentiable and nondifferentiable penalty
methods from previous sections have computational complexity in the
last primal point $u^k$ of order
$\mathcal{O}\left(\frac{1}{\epsilon}\log
(\frac{1}{\epsilon})\right)$ projections onto the set $\mathcal{K}
\times U$. \qed
\end{remark}

\vspace{2pt}

\noindent From previous discussion it follows that the optimal
penalty parameter $\rho$  depends on $\Delta^*$, which in general is
unknown a priori. Therefore, in the next section we introduce
implementable variants of previous first order penalty methods,
which approximate $\Delta^*$ at each iteration, but maintain the
same optimal computational complexities with those given in the
previous theorems (up to a logarithmic factor).

\subsection{Adaptive fast gradient penalty method}
In this section, regardless of the type of penalty function, we
introduce an Adaptive Penalty Method (A-PM), which rely on a
sequential increase of the penalty parameter $\rho$ until a
satisfactory value is attained.

\vspace{5pt}

\begin{center}
\framebox{
\parbox{12.5cm}{
\noindent \textbf{ Algorithm A-PM ($\rho_0, \epsilon, s $) }
\begin{enumerate}
\item[1.] Set $k=0$ and choose $u_0 \in U$. If $s=``N''$
 choose $\mu>0$. For $k \ge 0$ do:
\item[2.] Apply the Algorithm \textbf{ICFG} on the (smoothed) penalty subproblem and find $u^k$ such that:
\begin{align*}
\psi_{\rho_k}(u^k) - \psi_{\rho_k}^* \le \epsilon, & \;\; \text{if} \;\;\; s=``D''; \\
\phi_{\rho_k}(u^k) - \phi_{\rho_k}^* \le \epsilon, &  \;\;
\text{if}\;\;\; s=``N''.
\end{align*}
\item[3.] If the iterate $u^k$ satisfies $\text{dist}_{\mathcal{K}}(Gu^k+ g) \le \epsilon,$
then \textbf{STOP}. Otherwise, set $\rho_{k+1}=2\rho_k, k = k+1$ and
go to step 2.
\end{enumerate}
}}
\end{center}

\noindent In the previous sections we have seen that, in the general
case, when the optimal Lagrange multipliers do not necessarily
exist, there is a penalty parameter $\bar{\rho}$ dependent on the
type of penalty function, i.e.: $ \bar{\rho} = \begin{cases}
                 \frac{4\Delta^*}{\epsilon^2}, &\text{for smooth penalty} \\
                  \frac{3\Delta^*}{\epsilon}, &\text{for nonsmooth penalty}
                \end{cases},
$ such that if $\rho_k \ge \bar{\rho}$ and $\epsilon \le
\Delta^*/2$, then $u^k$ satisfies \eqref{optimalitycond} and the
algorithm stops. Further, we provide the  computational complexity
for Algorithm \textbf{A-PM} in the case when $\nabla f$ is Lipschitz
continuous with constant $L_f>0$. The complexity results for the
case when $f$ is simple can be derived  similarly.

\begin{theorem}
Under the assumptions of Theorem \ref{th_smooth_diff}, let
$\rho_0,\epsilon> 0$ and the sequence $(u^k)_{k \ge 0}$ be generated
by Algorithm \textbf{A-PM}$(\rho_0,\epsilon, s)$. For
nondifferentiable penalty case assume $\mu= \frac{\epsilon}{2}$.
After a total number of projections onto $\mathcal{K} \times U$
given by:
\begin{equation*}
\begin{cases}
\left\lceil N_{\epsilon}^{\text{out}} \left(\frac{4L_fD_U^2}{\epsilon}\right)^{1/2} +  \frac{24 (\rho_0\Delta^*)^{1/2}\norm{G} D_U }{\epsilon^{3/2}}\right\rceil ,   & \text{for smooth penalty} \\
\left \lceil N_{\epsilon}^{\text{out}}
\left(\frac{4L_fD_U^2}{\epsilon}\right)^{1/2} +
\frac{30(\rho_0\Delta^*\norm{G})^{1/2}D_U}{\epsilon^{3/2}}
\right\rceil, & \text{for nonsmooth penalty},
\end{cases}
\end{equation*}
where $N_{\epsilon}^{\text{out}}  = \begin{cases}
   \left\lceil \log \left( \frac{4\Delta^*}{\epsilon^2 \rho_0}\right) \right\rceil, &\text{for smooth penalty} \\
   \left\lceil \log \left( \frac{3\Delta^*}{\epsilon \rho_0}\right) \right \rceil, &\text{for nonsmooth penalty}
  \end{cases}
 $, the primal point $u^k$ satisfies primal suboptimality $f(u^k) - f^* \le \epsilon$  and primal infeasibility $\text{dist}_{\mathcal{K}}(Gu^k + g) \le \epsilon$.
\end{theorem}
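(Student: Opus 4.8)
The plan is to reuse the doubling-and-restart mechanism already analysed for Algorithm \textbf{A-IAL} in Theorem \ref{th_a-ial}, but now with the penalty parameter $\rho_k = 2^k\rho_0$ playing the role of the smoothing parameter. I would first record the two thresholds supplied by Theorems \ref{th_smooth_diff} and \ref{th_smooth_nondiff}: there is a finite
$$\bar\rho = \begin{cases} \dfrac{4\Delta^*}{\epsilon^2}, & \text{smooth penalty},\\[4pt] \dfrac{3\Delta^*}{\epsilon}, & \text{nonsmooth penalty},\end{cases}$$
such that once $\rho_k \ge \bar\rho$ and the penalty subproblem (smoothed with $\mu = \epsilon/2$ in the nonsmooth case) is solved to accuracy $\epsilon$, the returned $u^k$ is feasible up to $\epsilon$. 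The conceptual observation I would isolate at the outset is that the suboptimality estimate $f(u^k) - f^* \le \epsilon$ is produced at \emph{every} outer iteration as soon as the subproblem is solved to accuracy $\epsilon$ — it follows from $f(u^k) \le \psi_{\rho_k}(u^k) \le \psi_{\rho_k}^* + \epsilon \le f^* + \epsilon$ in the smooth case and from the chain \eqref{subopt_right_nondiff} in the nonsmooth case — and therefore it is only the feasibility test in step 3 that forces $\rho_k \ge \bar\rho$. Consequently the feasibility stopping criterion is the binding one and must trigger no later than the first index $k$ with $2^k\rho_0 \ge \bar\rho$, which bounds the number of outer restarts by $N_{\epsilon}^{\text{out}} = \lceil \log_2(\bar\rho/\rho_0)\rceil$, i.e. the two displayed values after substituting $\bar\rho$.

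Next I would count the inner work. At outer iteration $k$ the penalty parameter is $\rho_k$, so applying the fast-gradient rate of Theorem \ref{ifg_rate_conv}(ii) with Lipschitz constant $L_\psi = L_f + \rho_k\norm{G}^2$ (smooth) or $L_\phi = L_f + \rho_k\norm{G}/\mu$ with $\mu = \epsilon/2$ (nonsmooth) yields an $\epsilon$-accurate inner solution after $N_k^{\text{in}}$ projections onto $\mathcal{K}\times U$. Splitting the square root via $\sqrt{a+b}\le\sqrt a+\sqrt b$ exactly as in the proofs of Theorems \ref{th_smooth_diff} and \ref{th_smooth_nondiff}, this $N_k^{\text{in}}$ decomposes into a constant $L_f$-contribution $\sqrt{2L_fD_U^2/\epsilon}$ plus a $\rho_k$-contribution scaling as $\sqrt{\rho_k}$, namely $\norm{G}D_U\sqrt{2\rho_k/\epsilon}$ (smooth) or $2D_U\sqrt{\rho_k\norm{G}}/\epsilon$ (nonsmooth). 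Summing over $k=0,\dots,N_{\epsilon}^{\text{out}}$ then separates into two streams: the $L_f$-stream is constant in $k$ and contributes $N_{\epsilon}^{\text{out}}\bigl(1+\sqrt{2L_fD_U^2/\epsilon}\bigr)$, which for small $\epsilon$ is absorbed into the stated term $N_{\epsilon}^{\text{out}}(4L_fD_U^2/\epsilon)^{1/2}$ and carries the logarithmic factor; the $\rho_k$-stream is the geometric series $\sqrt{\rho_0}\sum_k 2^{k/2}\le \frac{2^{1/2}}{2^{1/2}-1}2^{N_{\epsilon}^{\text{out}}/2}\sqrt{\rho_0}$, which the bound $2^{N_{\epsilon}^{\text{out}}}\rho_0\le 2\bar\rho$ collapses to a multiple of $\sqrt{\bar\rho}$, precisely the telescoping step $(\mu^*/\mu_0)^{1/2}\sqrt{\mu_0}=\sqrt{\mu^*}$ used in Theorem \ref{th_a-ial}. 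Substituting $\bar\rho = 4\Delta^*/\epsilon^2$ or $3\Delta^*/\epsilon$ turns this dominant contribution into the $\epsilon^{-3/2}$ expressions displayed.

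I expect the only real obstacle to be bookkeeping rather than ideas: keeping the two additive streams ($L_f$-term scaling with $N_{\epsilon}^{\text{out}}$, $\rho_k$-term forming the geometric sum) cleanly separated, and pinning down the explicit constants $24$ and $30$ — together with the $\norm{G}$ versus $\sqrt{\norm{G}}$ split between the smooth and nonsmooth cases — in the geometric-sum collapse. The genuinely new content is light once Theorems \ref{th_smooth_diff}, \ref{th_smooth_nondiff} and \ref{ifg_rate_conv} are invoked: the argument is essentially the penalty analogue of Theorem \ref{th_a-ial}, with the feasibility test replacing the (uncheckable) optimality test and certifying termination in logarithmically many restarts.
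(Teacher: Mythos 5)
Your proposal is correct and follows essentially the same route as the paper's proof: bound the number of outer restarts by $\lceil\log(\bar\rho/\rho_0)\rceil$ via the thresholds from Theorems \ref{th_smooth_diff} and \ref{th_smooth_nondiff}, take the per-iteration inner complexity from the fast gradient rate with Lipschitz constant $L_f+\rho_k\norm{G}^2$ (resp. $L_f+\rho_k\norm{G}/\mu$), and collapse the resulting geometric sum $\sum_k\rho_k^{1/2}$ into a multiple of $\bar\rho^{1/2}$, exactly as in the adaptive augmented Lagrangian analysis of Theorem \ref{th_a-ial}. Your explicit remark that the $\epsilon$-suboptimality bound holds at every outer iteration (so only the feasibility test is binding) is a point the paper leaves implicit but is the correct justification for the stopping rule.
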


\begin{proof}
The proof follows similar lines as in Theorem \ref{th_a-ial}. It can
be easily seen that, independently of the assumptions on the
objective function $f$,  Algorithm \textbf{A-PM} requires:
\begin{align*}
N_{\epsilon}^{\text{out}} & = \begin{cases}
   \left\lceil \log \left( \frac{4\Delta^*}{\epsilon^2 \rho_0}\right) \right\rceil, &\text{for smooth penalty} \\
   \left\lceil \log \left( \frac{3\Delta^*}{\epsilon \rho_0}\right) \right \rceil, &\text{for nonsmooth penalty}
  \end{cases}
\end{align*}
outer steps to attain an $\epsilon-$optimal point.
Taking into account that in the nonsmooth case, we apply the
classical smoothing strategy from Section \ref{subsec_nondif}, the
iteration complexity for solving the inner subproblem, at outer
iteration $k$, can be bounded by:
\begin{equation*}
N_{\epsilon,k}^{\text{in}} =
\begin{cases}
   \left\lceil \left(\frac{2L_fD_U^2}{\epsilon}\right)^{1/2} + \rho_k^{1/2} \left(\frac{2\norm{G}D_U}{\epsilon^{1/2}}\right)\right\rceil, \;\; &\text{for smooth penalty} \\
   \left\lceil \left( \frac{2L_f D_U^2}{\epsilon} \right)^{1/2} + \rho_k^{1/2} \left[\frac{2\norm{G}D_U}{(\mu \epsilon)^{1/2}} \right] \right\rceil, \;\; &\text{for nonsmooth penalty},
\end{cases}
\end{equation*}
where $\mu>0$ is the smoothing parameter. Knowing the maximal number
of outer stages, note that the total number of the fast gradient
iterations can be computed by summation of all quantities
$N_{\epsilon,k}^{\text{in}}$. Observing that
$\sum\limits_{k=0}^{N_{\epsilon}^{\text{out}}} \rho_k^{\frac{1}{2}}
\le 6\rho_0 2^{\frac{N_{\epsilon}^{\text{out}}}{2}}$, then we obtain
the following bound on the overall complexity:
\begin{equation*}
\sum\limits_{k=0}^{N_{\epsilon}^{\text{out}}}  N_{\epsilon,k}^{\text{in}} \leq
\begin{cases}
N_{\epsilon}^{\text{out}} \left(\frac{2L_fD_U^2}{\epsilon}\right)^{1/2} +  \frac{24 (\rho_0\Delta^*)^{1/2}\norm{G} D_U }{\epsilon^{3/2}} + 1,   & \text{for smooth penalty} \\
N_{\epsilon}^{\text{out}}
\left(\frac{2L_fD_U^2}{\epsilon}\right)^{1/2} +
\frac{30(\rho_0\Delta^*)^{1/2}\norm{G}D_U}{\epsilon^{3/2}} + 1, &
\text{for nonsmooth penalty},
\end{cases}
\end{equation*}
which proves the statements of the theorem.
\end{proof}

\begin{remark}
 { If we assume that there exist a bound  $R_p$ such
that $\norm{u^0 - u^*_{\rho}} \le R_p < \infty $ for all $u^*_{\rho}
\in \arg\min\limits_{u \in U} \; \psi_{\rho}(u)$ \;(or \
$\phi_{\rho}(u))$, then we can remove the boundedness assumption on
$U$ (i.e. Assumption 2.2 $(ii)$) and all the previous complexity
results hold by replacing  $D_U$ with $R_p$. } \qed
\end{remark}

\noindent In conclusion, if we do not assume the existence of an
optimal Lagrange multiplier that closes the duality gap  for the
cone constrained convex problem \eqref{problem}, the  computational
complexity of fast gradient penalty methods, in the worst-case, is
of order $\mathcal{O}(\frac{1}{\epsilon^{3/2}})$.  Moreover, these
bound are tight as Example \ref{example} shows.



\section{Comparisons with  previous work}
\label{sec_comparison} We now present a brief comparison of our
computational complexity results  on  Lagrangian and penalty methods
with previous complexity results from the literature  in various
optimality criteria.   We start comparing the computational
complexity results on  (fast) gradient augmented Lagrangian methods
in the optimality criteria used in this paper: $|f(u_{\epsilon})
-f^* | \le  \epsilon$  and $
\text{dist}_{\mathcal{K}}(Gu_{\epsilon}+g) \le \epsilon$. Various
first order augmented Lagrangian methods have been developed in e.g.
\cite{AybIye:13, LanMon:15} and computational complexity estimates
of order $\mathcal{O}\left( \frac{1}{\epsilon} \right)$ have been
obtained in our criteria. For example,  in \cite{AybIye:13}, an
adaptive augmented Lagrangian method for cone constrained convex
optimization models was analyzed. The authors in  \cite{AybIye:13}
prove that the outer complexity is of order
$\mathcal{O}(\log(1/\epsilon))$ and the inner accuracy is
constrained to be of order $\delta_k = \mathcal{O}\left(\frac{1}{k^2
\beta^k} \right)$, where $\beta>1$, and thus the overall complexity
is similar to the estimates given  in our paper (up to a logarithmic
factor). However, our augmented Lagrangian algorithms can be easily
implemented in practice, their parameters are easy to compute  and
our analysis based on the inexact oracle framework is more simple
and intuitive in comparison with those given in \cite{AybIye:13,
LanMon:15}, opening various possibilities for extensions to more
complex optimization models.

\vspace{3pt}

\noindent On the other hand,  Lan et. al. in \cite{LanMon:15}
considered the linear equality constrained case (i.e. $\mathcal{K} =
\{0\}$) and used another set of $\epsilon-$optimality criteria, i.e.
any $u_{\epsilon} \in U$ is $\epsilon-$optimal if there exists
$x_{\epsilon} \in \rset^m$ satisfying:
\begin{equation}
\label{criteria_Lan}
  \nabla f(u_{\epsilon}) + G^Tx_{\epsilon} \in - \mathcal{N}_U(u_{\epsilon}) + \mathcal{B}_{\epsilon}(0) \quad \text{and} \quad   \norm{G u_{\epsilon} + g} \le \epsilon.
\end{equation}
In these criteria, without any regularization of the original
problem,  the gradient augmented Lagrangian algorithm \textbf{I-AL}
introduced in \cite{LanMon:15} has computational complexity of order
$\mathcal{O}\left( \epsilon^{-\frac{7}{4}}\right)$. We further show
that, using our approach we obtain a suboptimal point satisfying
\eqref{criteria_Lan}, with a much better iteration complexity for
the same algorithm \textbf{I-AL}. More precisely, with our analysis,
Theorem \ref{aug_grad_outer} leads to the fact that \textbf{I-AL}
method from \cite{LanMon:15} should perform: $
N_{\epsilon,1}^{\text{out}} = \left \lceil \frac{16 R_d}{3\mu
\epsilon} \right \rceil $ outer iterations with inner accuracy
$\delta = \frac{\mu \epsilon^2}{128}$. Therefore, denoting the inner
complexity $N_{\delta}^{\text{in}} \le  \sqrt{\frac{2^9 (L_f + \mu
\norm{G}^2)D_U^2}{\mu \epsilon^2}}$, the first stage of
\textbf{I-AL} method of \cite{LanMon:15} requires $
N^{\text{out}}_{\epsilon,1} N_{\delta}^{\text{in}}$ projections
onto $U$ and, on the other hand, the \textbf{Postprocessing}
procedure in \textbf{I-AL} of \cite{LanMon:15} performs $
\left\lceil \frac{2^{5/2} (L_f + \mu\norm{G}^2)D_U}{\epsilon} \right\rceil$ projections onto
$U$. Using these bounds, for any $\mu \ge \frac{L_f}{\norm{G}^2}$,
the total number of projections required by the \textbf{I-AL} method
in \cite{LanMon:15} is bounded with our analysis  by: $\frac{2^{10}
\norm{G} D_U R_d}{3\mu \epsilon^2} + \frac{2^{7/2} \mu \norm{G}^2
D_U}{\epsilon}$. For an optimal complexity, we choose the smoothing
parameter as $ \mu = \frac{2^{13/4} R_d^{1/2}}{ \norm{G}^{1/2}
\epsilon^{1/2}} + \frac{L_f}{\norm{G}^2}$. With this choice, the
\textbf{I-AL} method from \cite{LanMon:15} performs  with our
analysis:
$$ \left \lceil \mathcal{O}\left(\frac{\norm{G}^{3/2} R_d^{1/2} D_U }{\epsilon^{3/2}}\right) + \mathcal{O}\left(\frac{L_f D_U}{\epsilon} \right) \right \rceil$$

\vspace{-10pt}

\noindent projections onto  $U$, for attaining an $\epsilon$-optimal
point w.r.t. optimality criteria \eqref{criteria_Lan}.

\vspace{3pt}

\noindent Moreover, using a straightforward modification of the
first stage of the \textbf{I-AL} method by replacing the outer dual
gradient method with an outer dual fast gradient method, we can
obtain a fast \textbf{I-AL} method. From Theorem
\ref{aug_fastgrad_outer}  we have that fast \textbf{I-AL} method
performs: $ N^{\text{out}}_{\epsilon,2} = \left\lceil \sqrt{\frac{8 R_d}{\mu
\epsilon}} \right\rceil$ outer iterations with inner accuracy $\delta =
\frac{\mu \epsilon^2}{128}$, to attain an $\epsilon-$optimal point
satisfying \eqref{criteria_Lan}. Using the same reasoning as in the
previous case, the first stage of fast \textbf{I-AL} method requires
$ N^{\text{out}}_{\epsilon,2} N_{\delta}^{\text{in}}$
projections onto $U$ and the \textbf{Postprocessing} procedure
performs $ \left\lceil \frac{2^{5/2} (L_f + \mu\norm{G}^2)D_U}{\epsilon} \right\rceil$
projections onto $U$. Using these bounds, for any $\mu \ge
\frac{L_f}{\norm{G}^2}$, the total number of projections required by
the fast  \textbf{I-AL} method is bounded by: $ \frac{2^{15/2}
R_d^{1/2} \norm{G}D_U}{\mu^{1/2} \epsilon^{3/2}} + \frac{2^{7/2} \mu
\norm{G}^2D_U}{\epsilon}$. In order to attain the optimal
complexity, we choose the smoothing parameter as $ \mu = \frac{4
R_d^{1/3}}{\epsilon^{1/3}\norm{G}^{2/3}} + \frac{L_f}{\norm{G}^2}$.
With this choice, the fast \textbf{I-AL} method performs with our
analysis:
$$ \left \lceil \mathcal{O}\left(\frac{\norm{G}^{4/3} R_d^{1/3} D_U}{\epsilon^{4/3}}\right) + \mathcal{O}\left(\frac{L_f D_U}{\epsilon}\right) \right \rceil$$

\vspace{-10pt}

\noindent projections onto $U$. In conclusion, based on our settings
we obtain computational complexities of order
$\mathcal{O}(\epsilon^{-\frac{3}{2}})$ for the original
\textbf{I-AL} method and of order
$\mathcal{O}(\epsilon^{-\frac{4}{3}})$ for the fast \textbf{I-AL}
method, which are significantly better than the estimate
$\mathcal{O}(\epsilon^{-\frac{7}{4}})$ given in \cite{LanMon:15} for
optimality criteria \eqref{criteria_Lan}. Moreover, in our
optimality criteria defined in Section 2, we have seen that for an
optimal smoothing parameter both classical and fast augmented
Lagrangian methods have the same complexity, while in optimality
criteria \eqref{criteria_Lan} the fast \textbf{I-AL} has the best
overall complexity.  Finally, we can combine our approach with a
regularization technique, i.e. the  addition of a strongly convex
term $\frac{\gamma}{2}\norm{u-u^0}^2$ to the objective function,
used e.g. in \cite{LanMon:15},  and obtain also computational
complexity (for the last primal point) of order
$\mathcal{O}(\epsilon^{-1})$ in optimality criteria
\eqref{criteria_Lan}. Due to space limitations we omit these
derivations.

\vspace{2pt}

\noindent Outer complexity estimate of order
$\mathcal{O}(\frac{1}{\epsilon})$ for fast gradient Nesterov type
smoothing methods were   derived e.g. in
\cite{BotHen:15,NecSuy:08,QuoNec:15}. From our previous analysis we
can conclude that for an adequate choice of the parameter $\mu$, the
number of outer iterations is only one, and therefore, the outer
complexity estimates are irrelevant to the total complexity of the
method. Thus, we need to derive overall complexities as we do in
this paper.

\vspace{3pt}

\noindent Finally, there are very few iteration complexity results
for first order methods  for convex problems that might not have a
Lagrange multiplier closing the duality gap.  Recently, Nesterov has
proposed a specialized subgradient method for solving directly
general nonsmooth convex problems with functional constraints
without assuming the existence of bounded  optimal Lagrange
multipliers \cite{Nes:14}. The specialized subgradient method in
\cite{Nes:14} requires $\mathcal{O}(\frac{1}{\epsilon^2})$ total
subgradient computations for either the objective function or for a
functional constraint. In \cite{LanMon:13} the classical quadratic
penalty scheme is combined with Nesterov optimal method for solving
a general conic problem, but under the strong assumption of the
existence of optimal Lagrange multipliers. If the objective function
is smooth, then the quadratic penalty method requires
$\mathcal{O}(\frac{1}{\epsilon^2})$ projections on the simple convex
set and on the cone to attain an $\epsilon$-solution satisfying a
criterion given in terms of a set of KKT conditions. On the other
hand, using a regularization strategy for the original problem, the
quadratic penalty method requires  $\mathcal{O}(\frac{1}{\epsilon}
\log \frac{1}{\epsilon})$ projections to attain $\epsilon$-solution
for the same criterion. Therefore, the assumption on the existence
of an optimal Lagrange multiplier improves the iteration complexity
of a quadratic penalty method  from
$\mathcal{O}(\frac{1}{\epsilon^{3/2}})$ (see Section 4) to
$\mathcal{O}(\frac{1}{\epsilon})$ total iterations.  Moreover, for
this particular setting, one can guarantee that the suboptimality
estimates hold in both sides with arbitrary accuracy, compared with
our setting where only the right hand side can be attained
arbitrarily small. In conclusion, the \textit{price} we pay for
tackling a more general conic convex problem is the additional
computational effort and the fact that the function value represents
a lower approximation of the optimal value.


%
%

\section{Appendix}
In this section we provide  proofs for Theorems \ref{aug_grad_outer}
and \ref{aug_fastgrad_outer}.

\vspace{10pt}

\noindent \textbf{Appendix A.1}
\begin{proof} [Proof of Theorem \ref{aug_grad_outer}]
We derive the sublinear estimates for  primal infeasibility  and
primal suboptimality for the average primal point $\hat{u}^k =
\frac{1}{k}\sum\limits_{j=1}^k u^j$, where $u^j= u_{\mu}(x^j)$ and
$x^j$ is generated by Algorithm
\textbf{ICFG}$(d^{\text{ag}}_{\mu},0,3\delta,2L_{\text{d}})$ with
$\theta_k = 1$ for all $k \ge 0$. First, given the definition of
$x^{j+1}$ in Algorithm \textbf{ICFG} we get:
\[   x^{j+1} = x^j + \frac{1}{2L_{\text{d}}} \nabla_x \mathcal{L}^{\text{ag}}_{\mu}(u^j,x^j)
\quad \forall j \geq 0.\]
Subtracting $x^j$ from both sides, adding up these inequalities for
$j=0:k-1$, we get:
\begin{align*}
\left\|\frac{1}{k}\sum_{j=0}^{k-1} \nabla_x
\mathcal{L}^{\text{ag}}_{\mu}(u^j,x^j)\right\| =
\frac{2L_{\text{d}}}{k}\norm{x^{k} - x^0}.
\end{align*}
Note that $\nabla_x \mathcal{L}^{\text{ag}}_{\mu}(u^j,x^j) = Gu^j +
g - \left[ Gu^j + g + \frac{1}{\mu}x^j \right]_{\mathcal{K}}$. Using
notation $z^j = \left[ Gu^j + g + \frac{1}{\mu}x^j
\right]_{\mathcal{K}}$, then $\frac{1}{k} \sum\limits_{j=0}^{k-1}
z^j \in \mathcal{K}$. This fact implies:
\begin{equation}
\label{feasibility_prelim}
 \text{dist}_{\mathcal{K}}(G\hat{u}^k+g) \le
 \left\| \frac{1}{k}\sum_{j=0}^{k-1} (Gu^j + g)
 - \frac{1}{k}\sum\limits_{j=0}^{k-1} z^j \right\|   = \frac{2L_{\text{d}}}{k}\norm{x^{k} - x^0}.
\end{equation}
It remains to bound $\norm{x^{k} -x^0}$. Using the iteration of
\textbf{ICFG}, for $x \in \mathcal{K}^*$, we get:
\begin{align}
  \norm{x^{k+1}-x}^2 
&  = \norm{x^k  -x}^2  + 2\langle x^{k+1} - x^k, x^{k+1}- x \rangle - \norm{x^{k+1} - x^k}^2 \nonumber\\
&  = \norm{x^k  -x}^2  + \frac{1}{L_{\text{d}}} \langle \nabla_x \mathcal{L}^{\text{ag}}_{\mu}(u^k,x^k) , x^k - x \rangle \nonumber\\
& \quad \quad + \frac{1}{L_{\text{d}}} \left(\langle \nabla_x
\mathcal{L}^{\text{ag}}_{\mu}(u^k,x^k), x^{k+1} - x^{k} \rangle -
L_{\text{d}}\norm{x^{k+1} - x^k}^2\right)
\label{rel_seq1}\\
& \le \norm{x^k  -x}^2 +
\frac{1}{L_{\text{d}}}(d^{\text{ag}}_{\mu}(x^{k+1}) -
d^{\text{ag}}_{\mu}(x)) + \frac{3\delta}{L_{\text{d}}} \quad \forall
k \ge 0.\nonumber
\end{align}
Taking $x=x^*$ in the last inequality and using an inductive
argument, then we get:
\begin{equation*}
 \norm{x^{k} - x^0} \le \norm{x^{k}-x^*} + \norm{x^0-x^*}
 \le 2\norm{x^0-x^*} + \sqrt{\frac{3k \delta}{L_{\text{d}}}}.
\end{equation*}
We substitute this bound into \eqref{feasibility_prelim} and we get
the estimate on primal  infeasibility:
\begin{equation}\label{feasibility_prelim1}
 \text{dist}_{\mathcal{K}}(G\hat{u}^k+g)
 \le  \frac{4L_{\text{d}}R_{\text{d}}}{k} +
 \frac{2L_{\text{d}}}{k}\sqrt{\frac{3k\delta}{L_{\text{d}}}}
 = \frac{4L_{\text{d}}R_{\text{d}}}{k} + \sqrt{\frac{12L_{\text{d}}\delta}{k}}.
\end{equation}
It remains to derive the estimates on primal suboptimality. First,
we observe that for any $u \in U$, we have $d_{\mu}(x) \le f^*$ and
the following identity holds:
\begin{align}\label{identity_lag}
\mathcal{L}^{\text{ag}}_{\mu}(u,x) - \langle \nabla_x
\mathcal{L}^{\text{ag}}_{\mu}(u,x), x\rangle  = f(u) +
\frac{\mu}{2}\norm{\nabla_x \mathcal{L}^{\text{ag}}_{\mu}(u,x)}^2.
\end{align}

\noindent Based  on the previous discussion, from \eqref{rel_seq1}
and \eqref{identity_lag} we derive that:
\begin{align*}
 & \norm{x^{k+1} - x}^2
\!\le\! \norm{x^k - x}^2 \!+\! \frac{1}{L_{\text{d}}}
 \left(d_{\mu}(x^{k+1}) \!-\! \mathcal{L}_{\mu}(u^k,x^k) \!+\!
 \langle \nabla_x \mathcal{L}_{\mu}(u^k,x^k), x^k - x \rangle
 \!+\! 3\delta \right)\\
 & \qquad \le \norm{x^k -x}^2 + \frac{1}{L_{\text{d}}} \left( f^* - f(u^k) -
 \frac{\mu}{2}\norm{\nabla_x \mathcal{L}^{\text{ag}}_{\mu}(u,x)}^2 + 3\delta - \langle
\nabla_x \mathcal{L}_{\mu}(u^k,x^k),x\rangle   \right).
 \end{align*}
Taking now $x=0$, and using an inductive argument over $j=0:k-1$, we
obtain:
\begin{equation}\label{right_subopt}
 f(\hat{u}^k) - f^* \le \frac{L_{\text{d}}\norm{x^0}^2}{k} + 3\delta.
\end{equation}
On the other hand, to bound below $f(\hat{u}^k) - f^*$ we proceed as
follows:
\begin{align}
f^* &= \min\limits_{u \in U, r \in \mathcal{K}} f(u) + \langle x^*, Gu+g -r \rangle \le f(\hat{u}^k) + \langle x^*, G\hat{u}^k+g - \left[G\hat{u}^k + g\right]_{\mathcal{K}} \rangle \nonumber\\
& \le f(\hat{u}^k) + \norm{x^*} \norm{G\hat{u}^k+g -
\left[G\hat{u}^k + g\right]_{\mathcal{K}}}  = f(\hat{u}^k) +
\norm{x^*} \text{dist}_{\mathcal{K}}
\left(G\hat{u}^k+g\right).\label{left_subopt}
 \end{align}
Combining \eqref{feasibility_prelim1} with \eqref{left_subopt} and
then with \eqref{right_subopt}, we obtain the estimate on primal
suboptimality stated in the theorem.
\end{proof}


\vspace{10pt}

\noindent \textbf{Appendix A.2}
\begin{proof} [Proof of Theorem \ref{aug_fastgrad_outer}]
We derive sublinear estimates for primal infeasibility and
suboptimality of the average primal point $\hat{u}^k =
\frac{1}{S^{\theta}_k} \sum\limits_{j=0}^{k-1} \theta_j u^j$, where
$u^j = u_\mu(x^j)$ and $x^j$ generated by Algorithm
\textbf{ICFG}$(d^{\text{ag}}_{\mu},0,3\delta,2L_{\text{d}})$ with
$\theta_{k+1} = \frac{1 + \sqrt{1 + 4 \theta_k^2 }}{2}$ for all $k
\geq 1$.  We observe that: $\frac{k+1}{2} \le \theta_k \le k$ and
$S_k^{\theta} =\theta_{k-1}^2$.  We denote $l^k = x^{k-1} +
\theta_{k}(x^k - x^{k-1})$ and recall that the following relation
has been proved in \cite{Tse:08,NecPat:14}:

\vspace{-25pt}

\begin{align}
\label{corr2_relation_ag} \theta_{k}^2 (d_{\mu}^{\text{ag}}(x) \!-\!
d_{\mu}^{\text{ag}}(x^{k})) \!+\!
\sum\limits_{i=1}^{k-1}\theta_{i}\Delta(x,y^i) \!+\! L_\text{d} &
\norm{l^{k} \!-\! x}^2  \!\le\! L_\text{d} \norm{x^0-x}^2 \!+\!
3\sum\limits_{i=1}^{k-1}\theta_i^2 \delta,
\end{align}

\vspace{-10pt}

\noindent where $\Delta(x,y) =
\mathcal{L}_{\mu}^{\text{ag}}(u_{\mu}(y),y) + \langle \nabla_x
\mathcal{L}_{\mu}^{\text{ag}}(u_{\mu}(y),y), x- y\rangle -
d_{\mu}^{\text{ag}}(x)$. Now we are ready to prove Theorem
\ref{aug_fastgrad_outer}. From definition of  augmented dual
function $d^{\text{ag}}_{\mu}$, it can be seen that  $x^k = y^k +
\frac{1}{2 L_{\text{d}}}\nabla_x
\mathcal{L}_{\mu}^{\text{ag}}(u^k,y^k)$. Multiplying by
$\theta_{k}$, we obtain:
\begin{align} \label{feasibility_aux1}
\frac{\theta_k}{2L_{\text{d}}} \nabla_x
\mathcal{L}_{\mu}^{\text{ag}}(u^k,y^k)
& =  \theta_{k}(x^{k}-y^{k}) = \theta_{k}(x^{k}-x^{k-1}) + (\theta_{k-1} -1)(x^{k-2} - x^{k-1})\nonumber\\
& = \underbrace{x^{k-1} + \theta_{k}(x^{k}-x^{k-1})}_{l^{k}} -
\underbrace{(x^{k-2} + \theta_{k-1}(x^{k-1}-x^{k-2}))}_{l^{k-1}}.
\end{align}
\noindent Summing on the history of $l^k$ and multiplying by
$\frac{2L_{\text{d}}}{S_k^\theta}$, we obtain:
\begin{align*}
\text{dist}_{\mathcal{K}}\left(G\hat{u}^k + g\right) \le
\left\|\sum\limits_{j=0}^{k-1}  \frac{\theta_j}{S_k^\theta} \nabla_x
\mathcal{L}_{\mu}^{\text{ag}}(u^j,y^j) \right\| =
\frac{2L_\text{d}}{S_k^\theta}\norm{l^{k}-l^0} \le
\frac{8L_\text{d}}{k^2} \norm{l^k -l^{0}}.
\end{align*}

\vspace{-10pt}

\noindent Since  $x^* = \arg\max\limits_{x \in \rset^m}
d^{\text{ag}}_{\mu}(x)$, by taking $x = x^*$ in
\eqref{corr2_relation_ag}, we get:
\begin{align*}
\| l^k - x^* \| \le \sqrt{\| x^0 - x^* \|^2 +
\sum\limits_{i=1}^{k-1} \frac{3\theta_i^2\delta}{L_{\text{d}}}}
&\le \| x^0 - x^* \| + \sqrt{\frac{3\delta}{L_{\text{d}}} S^{\theta}_{k} \max_{1\le i \le k-1} \theta_i } \\
& \le \| x^0 - x^* \| + \sqrt{\frac{3 \delta}{L_{\text{d}}}}
(k-1)^{3/2},
\end{align*}
for all $k \geq 0$. Thus, we can further bound the primal
feasibility as follows:
\begin{equation}
\label{infes_av_2_ag} \text{dist}_{\mathcal{K}}(G \hat{u}^k + g) \le
\frac{8L_{\text{d}} R_{\text{d}}}{k^2} +
8\sqrt{\frac{3L_{\text{d}}\delta}{k}}.
\end{equation}

\noindent Further, we derive sublinear  estimates for primal
suboptimality. First, note that:
\begin{align*}
\Delta(x,y^k) &=  \mathcal{L}_{\mu}^{\text{ag}}(u^{k},y^{k}) + \langle \nabla_x \mathcal{L}_{\mu}^{\text{ag}}(u^{k},y^{k}), x-y^{k}\rangle - d_{\mu}^{\text{ag}}(x) \\
& \ge f(u^k) + \langle \nabla_x \mathcal{L}^{\text{ag}}(u^k,y^k),
x\rangle - d_{\mu}^{\text{ag}}(x).
 \end{align*}
\noindent Summing on the history  and using  the convexity of $f$,
we get:
\begin{align}
\sum\limits_{i=1}^{k-1} \theta_i \Delta(x,y^i)
& \ge \sum\limits_{i=1}^{k-1}\theta_i( f(u^i) + \langle \nabla_x \mathcal{L}^{\text{ag}}(u^i,y^i), x\rangle - d_{\mu}^{\text{ag}}(x) )\nonumber\\
& \ge \theta_{k}^2\left(f(\hat{u}^k) + \sum\limits_{i=1}^{k-1}
\frac{\theta_i}{S_k^{\theta}} \langle \nabla_x
\mathcal{L}_{\mu}^{\text{ag}}(u^{i},y^i),x\rangle
-d_{\mu}^{\text{ag}}(x)\right), \label{sum_theta_aux_ag}
\end{align}
for all $x \in \rset^m$. Using \eqref{sum_theta_aux_ag} in
\eqref{corr2_relation_ag}, and dropping the term
$L_{\text{d}}\norm{l^{k}-x}^2$, we have:
\begin{equation*}
f(\hat{u}^k) + \sum\limits_{i=1}^{k-1} \frac{\theta_i}{S_k^{\theta}}
\langle \nabla_x \mathcal{L}_{\mu}^{\text{ag}}(u^{i},y^i),x\rangle
-d_{\mu}^{\text{ag}}(x) \overset{\eqref{sum_theta_aux_ag} +
\eqref{corr2_relation_ag}}{\le}
\frac{L_\text{d}}{\theta_{k-1}^2}\norm{x^0-x}^2 +
\frac{3\sum\limits_{i=1}^{k-1} \theta_i^2}{\theta_{k-1}^2} \delta
\end{equation*}
for all $x \in \rset^m.$ Given that
$\frac{1}{\theta_{k-1}^2}\sum\limits_{i=1}^{k-1} \theta_i^2 =
\frac{1}{S^{\theta}_{k}} \sum\limits_{i=1}^{k-1} \theta_i^2 \le
\max\limits_{1\le i \le k-1} \theta_i \le k-1$ and
$d_{\mu}^{\text{ag}}(x) \le f^*$, by choosing the Lagrange
multiplier $x = 0$, we further have:
\begin{align}\label{subopt_av_1_ag}
f(\hat{u}^{k}) - f^* \le f(\hat{u}^{k}) - d_{\mu}^{\text{ag}}(0) \le
\frac{4L_\text{d}\norm{x^0}^2}{k^2} + 3k\delta.
\end{align}
\noindent On the other hand, we have:
\begin{align}
f^* &=  \min_{u \in U, s \in \mathcal{K}} f(u) + \langle x^*, Gu+g -s \rangle \leq f(\hat u^k) + \langle x^*, G\hat u^k +g - \left[Gu^k+g \right]_{\mathcal{K}} \rangle\nonumber\\
& \overset{\eqref{infes_av_2_ag}}{\leq} f(\hat x^k) +
\frac{8L_{\text{d}} R_{\text{d}}^2}{k^2} +
8R_{\text{d}}\sqrt{\frac{3L_{\text{d}}\delta}{k}}.
\label{subopt_av_2_ag}
\end{align}
Finally, from \eqref{infes_av_2_ag}, \eqref{subopt_av_1_ag} and
\eqref{subopt_av_2_ag} we get the estimates on primal infeasibility
and suboptimality stated in the theorem.
\end{proof}


\vspace{10pt}

\noindent\textbf{Appendix A.3}
\begin{proof}[of Theorem \ref{th_modif_ns}]
\noindent  First, note that an analog result as in the previous Appendix 
holds in this case, and for clarity we state
it below (see e.g. \cite{Tse:08} for a proof):
\begin{lemma}
\label{th_tseng} Let $\mu,\delta>0$ and sequences $(x^k, y^k)_{k
\geq 0}$ be generated by Algorithm
\textbf{ICFG}($d_{U,\mu},d_{\mathcal{K}},\delta$) with $\theta_{k+1}
= \frac{1 + \sqrt{1 + 4 \theta_k^2 }}{2}$ for all $k \geq 1$, then
for any Lagrange multiplier $x$ and iteration $k$ we have:
\begin{equation}\label{corr2_relation}
\theta_{k}^2 (d_{\mu}(x) - d_{\mu}(x^{k})) +
\sum\limits_{i=1}^{k-1}\theta_{i}\Delta(x,y^i) +
L_\text{d}\norm{l^{k}-x}^2 \le
L_\text{d} \norm{x^0-x}^2 + 3\sum\limits_{i=1}^{k}\theta_i^2 \delta,
\end{equation}
where we use $\Delta(x,y) = \mathcal{L}_{\mu}(u_{\mu}(y),y) +
\langle \nabla_x \mathcal{L}_{\mu}(u_{\mu}(y),y), x- y\rangle -
d_{\mu}(x)$.
\end{lemma}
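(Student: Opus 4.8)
The plan is to recognize \eqref{corr2_relation} as the master recursion of Tseng's inexact accelerated proximal gradient method applied to the composite maximization $\max_{x\in\rset^m} d_\mu(x)$, with smooth part $\phi = d_{U,\mu}$ and simple part $\psi = d_{\mathcal{K}}$. The only nonstandard feature is that $d_{U,\mu}$ is accessed through the first order inexact $(3\delta,2L_{\text{d}})$-oracle \eqref{inexact_oracle}, so the usual exact-gradient recursion acquires an additive error at each step. First I would isolate the one-step ingredient. Writing $x^k,y^k$ for the ICFG iterates, the optimality condition of the prox-subproblem in step~2 of ICFG (with $\tfrac{L}{2}=L_{\text{d}}$ and $\psi=d_{\mathcal{K}}$), combined with the upper bound in the oracle \eqref{inexact_oracle} at $y^k$ and the three-point identity
\[
L_{\text{d}}\big(\norm{x-y^k}^2 - \norm{x-x^k}^2\big) = 2L_{\text{d}}\langle x^k - y^k,\, x - x^k\rangle + L_{\text{d}}\norm{x^k - y^k}^2,
\]
produces a per-iteration inequality relating $d_\mu(x^k)$, $d_\mu(x)$, the linearization gap $\Delta(x,y^k)$, and the squared distances $\norm{x-y^k}^2,\norm{x-x^k}^2$, with an additive remainder $3\delta$ coming from the inexactness. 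This identity is what absorbs all first-order cross terms into a single difference of squared distances.

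Next I would introduce the momentum weights. The recursion $\theta_{k+1}=\tfrac{1+\sqrt{1+4\theta_k^2}}{2}$ is equivalent to the algebraic identity $\theta_i^2-\theta_i=\theta_{i-1}^2$, which is precisely what makes the telescoping close. Using the extrapolation $y^i = x^{i-1}+\tfrac{\theta_{i-1}-1}{\theta_i}(x^{i-1}-x^{i-2})$ from step~3 together with the analog of \eqref{feasibility_aux1}, namely $\tfrac{\theta_i}{2L_{\text{d}}}\nabla_x\mathcal{L}_\mu(u^i,y^i)=l^i-l^{i-1}$ with $l^i=x^{i-1}+\theta_i(x^i-x^{i-1})$, I would apply the one-step inequality twice at iteration $i$ (once at the reference point $x$, once at $x^{i-1}$), form the convex combination dictated by the $\theta_i$-extrapolation, and multiply by $\theta_i^2$. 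Invoking $\theta_{i-1}^2=\theta_i^2-\theta_i$, the weighted quadratic remainders reorganize into the telescoping difference $L_{\text{d}}\big(\norm{l^i-x}^2-\norm{l^{i-1}-x}^2\big)$. Summing over $i=1,\dots,k$ and using $l^0=x^0$ collapses everything to \eqref{corr2_relation}: the distance terms telescope to $L_{\text{d}}\norm{l^k-x}^2\le L_{\text{d}}\norm{x^0-x}^2$, the gaps accumulate as $\sum_{i=1}^{k-1}\theta_i\Delta(x,y^i)$, and the errors accumulate as $3\sum_{i=1}^{k}\theta_i^2\delta$.

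The main obstacle, and the step that must be executed carefully, is the bookkeeping of the extrapolation and weights so that the cross terms telescope \emph{exactly}: one must check that the convex-combination coefficients induced by the ICFG momentum step are compatible with the $\theta_i$ recursion, so that after weighting no stray first-order term survives and the per-step quadratic remainder is exactly $L_{\text{d}}\big(\norm{l^i-x}^2-\norm{l^{i-1}-x}^2\big)$. Since this is exactly Tseng's computation, rather than reproduce it in full I would invoke \cite{Tse:08} for the exact-oracle skeleton and only track how the additive $3\delta$ from \eqref{inexact_oracle} propagates. Because the oracle error enters additively and is scaled by $\theta_i^2$ at iteration $i$ (the weight attached to $d_\mu(x^i)$), its total contribution is precisely $3\sum_{i=1}^{k}\theta_i^2\delta$, matching the right-hand side of \eqref{corr2_relation}; the remaining care is simply to verify that the truncation to $k-1$ in the $\Delta$-sum versus $k$ in the $\delta$-sum is the benign consequence of the final iterate not yet contributing a $\Delta$ term.
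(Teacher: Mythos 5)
Your proposal is correct and follows essentially the same route as the paper, which states this lemma without proof and simply refers to \cite{Tse:08} for the underlying telescoping argument (just as the analogous relation \eqref{corr2_relation_ag} in Appendix A.2 is attributed to \cite{Tse:08,NecPat:14}). Your extra bookkeeping of how the additive $3\delta$ from the inexact oracle \eqref{inexact_oracle} enters each $\theta_i^2$-weighted step and accumulates to $3\sum_{i}\theta_i^2\delta$ is precisely the modification needed on top of Tseng's exact-oracle computation, so the two approaches coincide.
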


\noindent Based on the same notations and reasoning as in Appendix
A.2,  taking $x=x^*_{\mu}$ in \eqref{corr2_relation} and using
 that  terms $\theta_{k}(f^*_{\mu}-d_{\mu}(x^{k}))$ and $\sum\limits_{i=1}^{k-1}\theta_{i}
 \Delta(x^*_{\mu},y^i)$ are positive, we obtain:
\begin{align}
\text{dist}_{\mathcal{K}}\left(G\hat{u}^k +g \right)   &\le \frac{8L_\text{d}}{k^2} \|l^k - l^0\| \le \frac{8L_\text{d}}{k^2}(\|l^k - x^*_{\mu} \| + \|l^0 - x^*_{\mu}\|)  \nonumber\\
& \le \frac{8L_\text{d}R_{\text{d}}}{k^2}  +
8\sqrt{\frac{3L_{\text{d}}\delta}{k}}.
\label{infes_av}
\end{align}

\noindent Further, we derive sublinear  estimates for primal
suboptimality. First, note that:
\begin{align*}
\Delta(x,y^{k}) &=  \mathcal{L}_{\mu}(u^{k},y^{k}) + \langle \nabla_x \mathcal{L}_{\mu}(u^{k},y^{k}), x-y^{k}\rangle - d_{\mu}(x) \\
&=\mathcal{L}_{\mu}(u^{k},y^{k}) + \langle Gu^{k}+g , x-y^{k}\rangle -
 d_{\mu}(x)
=\mathcal{L}_{\mu}(u^{k},x) - d_{\mu}(x).
 \end{align*}
\noindent Summing on the history  and using  the convexity of
$\mathcal{L}_{\mu}(\cdot,x)$, we get:
\begin{align}
\sum\limits_{i=1}^{k-1}\theta_i \Delta(x,y^i)
&= \sum\limits_{i=1}^{k-1}\theta_i(\mathcal{L}_{\mu}(u^{i},x) - d_{\mu}(x)) \nonumber\\
&\ge S_{k}^{\theta}\left(\mathcal{L}_{\mu}(\hat{u}^{k},x) -
d_{\mu}(x)\right) =
\theta_{k}^2\left(\mathcal{L}_{\mu}(\hat{u}^{k},x) -
d_{\mu}(x)\right). \label{sum_theta_aux}
\end{align}
Using \eqref{sum_theta_aux} in \eqref{corr2_relation},
$\frac{\sum\limits_{i=1}^{k-1}\theta_{i}^2}{S_{k}^{\theta}} \le
\max\limits_{1 \le i \le k-1} \theta_i \le k-1$ and dropping  term
$\frac{L_{\text{d}}}{2}\norm{l^{k}-x}^2$, we have:
\begin{equation}\label{obs_aux}
\mathcal{L}_{\mu}(\hat{u}^{k}, x) - d_{\mu}(x^{k}) \le
\frac{L_\text{d}}{\theta_{k}^2}\norm{x^0-x}^2 + 3k\delta.
\end{equation}
Choosing the  multiplier $x =0$, we observe that
$\mathcal{L}_{\mu}(\hat{u}^k,0) \ge f(\hat{u}^k)$ and $ d_{\mu}(x)
\le f^* + \frac{\mu}{2}D_U^2$ for all $x \in - \mathcal{K}^*$. Then,
combining this observations with \eqref{obs_aux} leads to:
\begin{align*}
f(\hat{u}^{k}) - f^* \le f(\hat{u}^{k}) - d_{\mu}(x^{k}) &
\overset{\eqref{obs_aux}}{\le}
\frac{4L_\text{d}\norm{x^0}^2}{k^2} + \frac{\mu}{2}D_U^2 + 3k
\delta
& \le \frac{4\norm{G}^2 R_d^2}{\mu k^2} + \frac{\mu}{2}D_U^2 +
3k\delta.
\end{align*}
We choose the optimal smoothing parameter by minimizing the above
expression over $\mu$ and  obtain: $\mu (k) = \frac{2^{3/2}
\norm{G}R_{\text{d}}}{D_Uk}$.  Replacing this value in the above
estimates, we obtain:
\begin{align*} 
f(\hat{u}^{k}) - f^* &\le  \frac{2^{3/2}\norm{G} R_{\text{d}} D_U}{k} +
3k\delta.
\end{align*}
Also, taking $L_{\text{d}} = \frac{\norm{G}^2}{\mu(k)}$ in the
feasibility gap \eqref{infes_av}, we get the estimate on
infeasibility: $\text{dist}_{\mathcal{K}}\left( G\hat{u}^k +g
\right) \le \frac{2^{3/2}\norm{G} D_U}{k}   +  2 \left(\frac{ \norm{G} D_U
\delta }{R_{\text{d}}}\right)^{1/2}$.
On the other hand, we have:
\begin{align*}
f^* &=  \min_{u \in U, s \in \mathcal{K}} f(u) + \langle x^*, Gu+g
-s \rangle \leq f(\hat u^k) + \langle x^*,
G\hat u^k +g - \left[Gu^{k}+g \right]_{\mathcal{K}} \rangle\nonumber\\
& \leq f(\hat u^{k}) + \frac{2 \norm{G}D_U R_d}{k} +2\left( \delta
\norm{G} D_U R_d
\right)^{1/2}, 
\end{align*}
which proves the statements of the theorem.
\end{proof}


\begin{thebibliography}{35}

\bibitem{AybIye:13}
N.  Aybat and G. Iyengar, \textit{An Augmented Lagrangian Method for
Conic Convex Programming}, Working paper,
{ \verb http://arxiv.org/abs/1302.6322 }, 2013.

\bibitem{BecTeb:09}
A.~Beck and M.~Teboulle, \textit{A Fast Iterative
Shrinkage-Thresholding Algorithm for Linear Inverse Problems}, SIAM
Journal Imaging Science, 2(1): 183--202, 2009.

\bibitem{BotHen:15}
R.  Bot and C. Hendrich, \textit{ A variable smoothing algorithm for
solving convex optimization problems}, TOP, 23(1): 124--150, 2015.

\bibitem{BotHen:15o}
R. Bot and C. Hendrich, \textit{On the acceleration of the double
smoothing technique for unconstrained convex optimization problems},
Optimization, 64(2): 265--288, 2015.

\bibitem{BecTeb:12}
A. Beck and M. Teboulle, \textit{Smoothing and first order methods:
a unified framework}, SIAM Journal on Optimization, 22(2): 557--580,
2012.

\bibitem{DevGli:14}
O. Devolder, F. Glineur and Yu. Nesterov, \textit{First-order
methods of smooth convex optimization with inexact oracle},
Mathematical Programming, 146: 37--75, 2014.


\bibitem{DevGli:12}
O. Devolder, F. Glineur and Yu. Nesterov, \textit{Double smoothing
technique for large-scale linearly constrained convex optimization},
SIAM Journal on Optimization, 22(2): 702–-727, 2012.

\bibitem{LanLu:11}
G. Lan, Z. Lu   and  R.  Monteiro, \textit{Primal-dual first order
methods with $\mathcal{O}(1/\epsilon)$ iteration-complexity for cone
programming}, Mathematical Programming, 126: 1--29, 2011.

\bibitem{LanMon:13}
G. Lan  and R. Monteiro, \textit{Iteration-complexity of first order
penalty methods for convex programming}, Mathematical Programming,
138: 115--139, 2013.

\bibitem{LanMon:15}
G. Lan  and R.  Monteiro, \textit{Iteration-complexity of first
order augmented Lagrangian methods for convex programming},
Mathematical Programming, DOI 10.1007/s10107-015-0861-x, 2015.


\bibitem{MonSva:10}
R. Monteiro and B. Svaiter, \textit{On the complexity of the hybrid
proximal extragradient method for the iterates and the ergodic
mean}, SIAM J. Optimization, 20(6): 2755--2787, 2010.


\bibitem{NecNed:13}
I.~Necoara and V.~Nedelcu, \textit{Rate analysis of inexact dual
first order methods: application to  dual decomposition}, IEEE
Transactions on Automatic Control, 59(5): 1232--1243, 2014.

\bibitem{NecPat:14}
I. Necoara and A. Patrascu, \textit{Iteration complexity analysis of
dual first order methods for conic convex programming}, Optimization
Methods and Software, 31(3): 645--678, 2016.

\bibitem{NecPat:15}
I. Necoara, A. Patrascu, F. Glineur, \textit{Complexity
certifications of first order inexact Lagrangian and penalty methods
for conic convex programming}, Technical Report, University
Politehnica of Bucharest, 2015,
 {\verb https://arxiv.org/abs/1506.05320 }

\bibitem{NecSuy:08}
I.~Necoara and J.A.K. Suykens, \textit{Application of a smoothing
technique to decomposition in convex  optimization}, IEEE
Transactions on Automatic Control, 53(11): 2674--2679, 2008.

\bibitem{NedNec:14}
V. Nedelcu, I. Necoara and Q. Tran-Dinh, \textit{Computational
Complexity of Inexact Gradient Augmented Lagrangian Methods:
Application to Constrained MPC}, SIAM Journal on Control and
Optimization, 52(5): 3109--3134, 2014.

\bibitem{NedOzd:09}
A. Nedic and A. Ozdaglar, \textit{Approximate Primal Solutions and
Rate Analysis for Dual Subgradient Methods}, SIAM Journal on
Optimization 19(4): 1757--1780, 2009.


\bibitem{Nem:04}
A. Nemirovski, \textit{Prox-method with rate of convergence
$\mathcal{O}(1/t)$ for variational inequalities with Lipschitz
continuous monotone operators and smooth convex-concave saddle point
problems}, SIAM Journal on Optimization, 15(1): 229--251, 2004.



\bibitem{Nes:051}
Yu. Nesterov, \textit{Smooth minimization of non-smooth functions},
Mathematical Programming, 103: 127--152, 2005.

\bibitem{Nes:07}
Yu. Nesterov, \textit{Dual extrapolation and its applications to
solving variational inequalities and related problems}, Mathematical
Programming, 109: 319--344.

\bibitem{Nes:13}
Yu. Nesterov, \textit{Gradient methods for minimizing composite
functions}, Mathematical Programming, 140: 125--161, 2013.

\bibitem{Nes:14}
Yu. Nesterov, {\em Subgradient methods for huge-scale optimization
problems}, Mathematical Programming, 146: 275--297, 2014,
 {\verb www.imtlucca.it/embopt-14/Slides/nesterov.pdf }.

\bibitem{QuoNec:15}
Q. Tran-Dinh, I. Necoara and M. Diehl, \textit{Fast inexact
distributed optimization algorithms for separable convex
optimization}, Optimization, 65(2): 325–356, 2016.

\bibitem{QuoCev:14}
Q. Tran-Dinh, V. Cevher, \textit{A primal-dual algorithmic framework
for constrained convex minimization}, Technical report, 2014,
 { \verb http://arxiv.org/abs/1406.5403 }.


\bibitem{RocWet:98}
R.T. Rockafellar and R. Wets. \textit{Variational Analysis},
Springer, 1998.

\bibitem{Tse:08}
P.~Tseng, \textit{On accelerated proximal gradient methods for
convex-concave  optimization},  SIAM Journal on Optimization
(submitted), 2008.

\bibitem{YurQuo:15}
A. Yurtsever, Q. Tran-Dinh  and V. Cevher, \textit{Universal
Primal-Dual Proximal-Gradient Methods}, Technical report, 2015,
 { \verb http://arxiv.org/abs/1502.03123 }.
\end{thebibliography}
\end{document}